\setlist{  
	listparindent=\parindent,
	parsep=0pt,
}
\theoremstyle{plain}
\newtheorem{thm}{Theorem}[section]
\newtheorem{prop}[thm]{Proposition}
\newtheorem{lemma}[thm]{Lemma}
\newtheorem{cor}[thm]{Corollary}
\theoremstyle{definition}
\newtheorem{myass}[thm]{Assumption}
\newtheorem{remark}[thm]{Remark}
\Crefname{thm}{Theorem}{Theorems}
\Crefname{prop}{Proposition}{Propositions}
\numberwithin{equation}{section} 
\DeclarePairedDelimiter\ipp{\langle}{\rangle}
\DeclarePairedDelimiter{\pa}{\lparen}{\rparen}
\DeclarePairedDelimiter{\jp}{\langle}{\rangle}
\DeclareMathOperator{\supp}{supp}
\DeclareMathOperator{\sgn}{sgn}
\DeclareMathOperator{\BMO}{BMO}
\newcommand{\M}{{\mathcal{M}}}
\newcommand{\p}{{\partial}}
\newcommand{\cre}{\color{red}}
\renewcommand{\d}{\mathsf{d}}
\newcommand{\R}{{\mathbb{R}}}
\newcommand{\N}{{\mathbb{N}}}
\newcommand{\Q}{{\mathbb{Q}}}
\newcommand{\K}{{\mathsf{K}}}
\newcommand{\Ss}{{\mathbb{S}}}
\newcommand{\g}{{\mathsf{g}}}
\newcommand{\Sc}{{\mathcal{S}}}
\renewcommand{\M}{{\mathbb{M}}}
\newcommand{\I}{\mathbb{I}}
\renewcommand{\k}{\mathsf{k}}
\newcommand{\nab}{\nabla}
\newcommand{\tl}{\tilde}
\newcommand{\ph}{\phantom{=}}
\newcommand{\nn}{\nonumber}
\newcommand{\XN}{X_N}
\newcommand{\ux}{X}
\newcommand{\ep}{\epsilon}
\newcommand{\vep}{\varepsilon}
\newcommand{\al}{\alpha}
\newcommand{\be}{\beta}
\newcommand{\ka}{\kappa}
\newcommand{\la}{\lambda}
\newcommand{\indic}{\mathbf{1}}
\newcommand{\Fr}{\mathsf{F}}
\newcommand{\As}{\mathsf{A}}
\newcommand{\E}{{\mathbb{E}}}
\newcommand{\Dm}{|\nabla|}
\newcommand{\rs}{\mathsf{r}}
\newcommand{\cd}{\mathsf{c}_{\mathsf{d},\mathsf{s}}}
\newcommand{\s}{\mathsf{s}}
\renewcommand{\k}{k}
\renewcommand{\P}{\mathcal{P}}
\newcommand{\Te}{\mathrm{Term}}
\newcommand{\as}{\mathsf{a}}
\let\div\relax
\DeclareMathOperator{\div}{\mathrm{div}}
\def\XXint#1#2#3{{\setbox0=\hbox{$#1{#2#3}{\int}$ }
		\vcenter{\hbox{$#2#3$ }}\kern-.6\wd0}}
\let\oldtocsection=\tocsection
\let\oldtocsubsection=\tocsubsection
\let\oldtocsubsubsection=\tocsubsubsection
\renewcommand{\tocsection}[2]{\hspace{0em}\oldtocsection{#1}{#2}}
\renewcommand{\tocsubsection}[2]{\hspace{1em}\oldtocsubsection{#1}{#2}}
\renewcommand{\tocsubsubsection}[2]{\hspace{2em}\oldtocsubsubsection{#1}{#2}}
\title[Another look at regularity in transport-commutator estimates]{Another look at regularity in transport-commutator estimates}
\author[E. Hess-Childs]{Elias Hess-Childs}
\address{Elias Hess-Childs, Carnegie Mellon University, Department of Mathematical Sciences, Pittsburgh, PA}
\email{ehesschi@andrew.cmu.edu}
\thanks{E.H.C was supported by NSF grants DMS-2342349, DMS-2424139.}
\author[M. Rosenzweig]{Matthew Rosenzweig}
\address{Matthew Rosenzweig, Carnegie Mellon University, Department of Mathematical Sciences, Pittsburgh, PA} 
\email{mrosenz2@andrew.cmu.edu}
\thanks{M.R. was supported by NSF grants DMS-2441170, DMS-2345533, DMS-2342349.}
\author[S. Serfaty]{Sylvia Serfaty}
\address{Sylvia Serfaty, Sorbonne Universit\'e,
 CNRS, Universit\'e de Paris,  Laboratoire Jacques-Louis Lions (LJLL), F-75005 Paris \& Institut Universitaire de France \&
Courant Institute of Mathematical Sciences, New York University}
\email{serfaty@cims.nyu.edu}
\thanks{S.S. was supported by NSF grant DMS-2247846 and by Institut Universitaire de France.}
\begin{document}
 \dedicatory{This article is dedicated to the memory of Ha\"{i}m Brezis whose life and work were a great source of inspiration.}
 
	\begin{abstract} 
		We are interested in how regular a transport velocity field must be in order to control Riesz-type commutators. Estimates for these commutators play a central role in the analysis of the mean-field limit and fluctuations for systems of particles with pairwise Riesz interactions, which we start by reviewing. Our first new result shows that the usual $L^\infty$ assumption on the gradient of the velocity field cannot, in general, be relaxed to a BMO assumption. We construct counterexamples in all dimensions and all Riesz singularities $-2< \s<\d$, except for the one-dimensional logarithmic endpoint $\s=0$. At this exceptional endpoint, such a relaxation is possible, a fact related to the classical Coifman-Rochberg-Weiss commutator bound for  the Hilbert transform. Our second result identifies a trade-off between the singularity of the interaction potential and the required regularity of the velocity field. Roughly speaking, smoother (less singular) interactions require stronger velocity control if one wants a commutator estimate in the natural energy seminorm determined by the potential. We formulate this principle for a broad class of potentials and show that, in the sub-Coulomb Riesz regime, the velocity regularity appearing in the known commutator inequality is sharp. Despite these negative findings, we show as our third result that a \emph{defective} commutator estimate holds for almost-Lipschitz transport fields. Such a defective estimate, which is a consequence of the celebrated Brezis-Wainger-Hansson inequality, allows us to prove rates of convergence when the mean-field density belongs to the scaling-critical Sobolev space.
	\end{abstract}
	\maketitle

	\section{Introduction}\label{sec:intro}
	
	\subsection{Mean-field limits}\label{ssec:introMF}
	In recent years, the study of \emph{mean-field limits} for interacting particle systems has seen significant progress. A basic model is the first-order system%
	\footnote{More general dynamics include positive temperature, non-pairwise interactions, non-translation-invariant kernels, and second-order/kinetic models.}
	\begin{equation}\label{eq:MFode}
			\dot{x}_i^t = \displaystyle\frac{1}{N}\sum_{1\leq j\leq N : j\neq i}\M\nabla\g(x_i^t-x_j^t) - \mathsf{V}^t(x_i^t), \qquad  i\in [N].
	\end{equation}
	Here, $\g$ is an \emph{interaction potential} to be specified, and $\mathsf{V}^t$ is a (possibly time-dependent) \emph{external field}. $\M$ is a $\d\times\d$ matrix with real entries that specifies the type of dynamics, with $\M=-\I$ (gradient/dissipative) and $\M$ antisymmetric (Hamiltonian/conservative) being the most common choices. The mean-field limit refers to the convergence as $N \to \infty$ of the {\it empirical measure} 
	\begin{equation}\label{eq:EMt}
		\mu_N^t\coloneqq \frac1N \sum_{i=1}^N \delta_{x_i^t}
	\end{equation}
	associated to a solution $\ux_N^t \coloneqq (x_1^t, \dots, x_N^t)$ of the system \eqref{eq:MFode}. Assuming the initial points $\ux_N^0$ are such that $\mu_N^0$ converges to a sufficiently regular measure $\mu^0$, a formal calculation leads one to expect that for $t>0$, $\mu_N^t$ converges to the solution of the \emph{mean-field equation}
	\begin{equation}\label{eq:MFlim}
			\partial_t \mu^t= \div (\mu^t(\mathsf{V}^t{-\M}\nabla \g*\mu^t)).
	\end{equation}
	Convergence of the empirical measure is qualitatively equivalent to {\it propagation of molecular chaos} (see \cite{golse_dynamics_2016,hauray_kacs_2014} and references therein). This latter notion means that if $f_N^0(x_1, \dots, x_N) = \mathrm{Law}(X_N^0)$ is $\mu^0$-chaotic (i.e.~the $k$-point marginals $f_{N;k}^0\rightharpoonup (\mu^0)^{\otimes k}$ as $N\rightarrow\infty$ for every fixed $k$), then $f_N^t = \mathrm{Law}(\ux_N^t)$ is $\mu^t$-chaotic. 
	
    Mean-field limits for systems of the form \eqref{eq:MFode} have a long history, beginning with regular drifts (typically, Lipschitz) and then progressing to increasingly more singular interactions over the years. In the interest of brevity, we refer to the introduction of our companion paper \cite{hess-childs_sharp_2025} for some guidance to the literature giving a proper review of these developments.
    
    Instead, we focus on the much more recent treatment of singular interactions, model examples of which are the family of \emph{log/Riesz} potentials\footnote{Since the log case may be understood as the $\s\rightarrow 0$ limit of the Riesz case, we always mean the term ``Riesz'' to include the log case.} given by 
	\begin{equation}\label{eq:gmod}
		\g(x)\coloneqq \begin{cases}  \frac{1}{\s} |x|^{-\s}, \quad  & \s \neq 0\\
			-\log |x|, \quad & \s=0.
		\end{cases}
	\end{equation}
	We  consider any dimension $\d\ge 1$ and assume that $-2<\s<\d$. Up to a normalizing constant $\cd$, the potential $\g$ is the fundamental solution of the fractional Laplacian $(-\Delta)^{\frac{\d-\s}{2}} = |\nab|^{\d-\s}$, i.e. $|\nabla|^{\d-\s}\g = \cd \delta_0$. The cases $0\le \s<\d$ and $-2<\s<0$ are respectively referred to as the \emph{singular} and \emph{nonsingular} regimes because of the behavior of $\g$ at the origin. The restriction $\s>-2$ is natural because the Riesz potential ceases to be conditionally positive definite at $\s=-2$.\footnote{{We say that a kernel $k:\R^\d\times\R^\d\rightarrow \R$ is \emph{conditionally positive definite (CPD)} iff for any signed Borel measure $\rho$ with zero mass, it holds that $\int_{(\R^\d)^2}k(x,y)d\rho(x)d\rho(y)\ge 0$. When the inequality is strict for any nonzero $\rho$, $k$ is said to be \emph{strictly} CPD (SCPD).}\label{fn:cpd_def}} The restriction to the \emph{potential} regime $\s<\d$, in which $\g$ is locally integrable, is to exclude the \emph{hypersingular} case, which is not of the mean-field type considered in this paper (e.g. see \cite{hardin_large_2018,hardin_dynamics_2021}). The case $\s=\d-2$ corresponds to the classical \emph{Coulomb} potential from electrostatics/gravitation. Based on this threshold, it is convenient to call the interaction \emph{sub-Coulomb} if $\s<\d-2$ and \emph{super-Coulomb} if $\s>\d-2$. More generally, long-range interactions of the form \eqref{eq:gmod} play an important role in physics \cite{dauxois_dynamics_2002}, approximation theory \cite{borodachov_discrete_2019}, and machine learning \cite{altekruger_neural_2023,hertrich_generative_2023, hertrich_wasserstein_2023, hagemann_posterior_2023}---just to name a few fields. Motivations for considering systems of the form \eqref{eq:MFode} are extensively reviewed in \cite{serfaty_lectures_2024}.  

    Mean-field convergence for the full range $-2<\s<\d$ has only been resolved in the last few years: the nonsingular case  $-2<\s<0$ \cite{rosenzweig_wasserstein_nodate}, the sub-Coulomb case $\s<\d-2$, \cite{hauray_wasserstein_2009,carrillo_derivation_2014}, the Coulomb/super-Coulomb case $\d-2\leq \s<\d$ \cite{duerinckx_mean-field_2016,carrillo_mass-transportation_2012,berman_propagation_2019,serfaty_mean_2020}, and the full singular case $0\leq \s<\d$ \cite{nguyen_mean-field_2022}. See also \cite{bresch_modulated_2019,rosenzweig_mean-field_2020,rosenzweig_global--time_2023, hess-childs_large_2023} for further extensions incorporating noise. These advances are due in large part to the \emph{modulated energy} method, which we review in the next subsection. 

    \subsection{Modulated energy and commutators}\label{ssec:introMEcomm}
    Since the empirical measure $\mu_N^t$ associated with the microscopic system ~\eqref{eq:MFode} is a weak solution of the mean-field equation ~\eqref{eq:MFlim}, a natural approach for showing mean-field convergence is a weak-strong stability estimate for~\eqref{eq:MFlim}. For Riesz interactions, such stability is conveniently formulated in terms of the energy-based distance
\begin{equation}\label{eq:modenergy}
\Fr_N(\ux_N, \mu) \coloneqq \frac12\int_{(\R^\d)^2 \setminus \triangle} \g(x-y) d\Big(\frac{1}{N} \sum_{i=1}^N \delta_{x_i} - \mu\Big)(x) d\Big(\frac{1}{N} \sum_{i=1}^N \delta_{x_i} - \mu\Big)(y),
\end{equation}
where we excise the diagonal $\triangle\coloneqq \{(x,y)\in(\R^\d)^2:x=y\}$ from the domain of integration in order to remove the infinite self-interaction of each particle.  The {\it modulated energy} \eqref{eq:modenergy} originated in the study of the statistical mechanics of Coulomb/Riesz gases \cite{sandier_1d_2015,sandier_2d_2015,rougerie_higher-dimensional_2016, petrache_next_2017} and later was used in the derivation of mean-field dynamics \cite{duerinckx_mean-field_2016,serfaty_mean_2020,nguyen_mean-field_2022} and following works. In the nonsingular case, where the diagonal contribution vanishes, the modulated energy coincides with the square of what is known in the statistics literature as \emph{maximum mean discrepancy (MMD)} \cite{gretton_kernel_2006, gretton_kernel_2007,gretton_kernel_2012}, a type of integral probability metric \cite{muller_integral_1997}. MMDs for varying choices of kernels are widely used in statistics and machine learning contexts as distances between probability measures (e.g. see \cite{modeste_characterization_2024, kolouri_generalized_2022,  hertrich_generative_2023}). 
We refer to  \cite[Chapter 4]{serfaty_lectures_2024} for a comprehensive discussion of the modulated energy.

The modulated energy approach to mean-field convergence consists of establishing a Gr\"onwall relation for $\Fr_N(\XN^t,\mu^t)$, where $\XN^t$ is a solution of the microscopic system \eqref{eq:MFode} and $\mu^t$ is a solution of the mean-field equation \eqref{eq:MFlim}. An essential point in this approach is to control the derivative of $\Fr_N$ along  a transport $v:\R^\d\rightarrow\R^\d$, that is
\begin{equation}\label{15}
 \frac{d}{dt}\Big|_{t=0} \Fr_N( (\I + tv)^{\oplus N} (\ux_N), (\I + tv)\# \mu)= {\frac12}\int_{(\R^\d)^2\setminus \triangle} 
\nabla\g(x-y)\cdot (v(x)-v(y))  d ( \mu_N- \mu)^{\otimes 2}(x,y),
\end{equation}
where for any configuration $\ux_N=(x_1, \dots, x_N)$, $\mu_N$ denotes $\frac1N \sum_{i=1}^N \delta_{x_i}$,  $\I:\R^\d\rightarrow\R^\d$ is the identity and $(\I+t v)^{\oplus N} (\ux_N) \coloneqq (x_1 + tv(x_1), \ldots, x_N+ tv(x_N))$. 
For applications to the mean-field limit, $v$ is the velocity field of the limiting evolution \eqref{eq:MFlim}. For applications to central limit theorems (CLTs) for the fluctuations (the next-order description beyond the mean-field limit), $v$ is the gradient of a test function evolved along the adjoint linearized mean-field flow \cite{huang_fluctuations_nodate, cecchin_convergence_2025}. These inequalities are at the core of the ``transport'' approach to fluctuations of canonical Gibbs ensembles \cite{leble_fluctuations_2018,bekerman_clt_2018,serfaty_gaussian_2023, peilen_local_2025}  and also appear in the loop (Dyson--Schwinger) equations in the random matrix theory literature, e.g. \cite{borot_asymptotic_2013-1, borot_asymptotic_2024, bauerschmidt_two-dimensional_2019}. Higher-order variations of the modulated energy are also important, a point we return to in \cref{ssec:RQhigher}. 

The desired control is a functional inequality of the form
\begin{equation}\label{eq:introcomm}
|\eqref{15} | \le C_1(\Fr_N(\ux_N, \mu) + C_2N^{-\alpha}),
\end{equation}
where $\al>0$ depends on $\d,\s$, $C_1>0$ is a constant depending on $\d,\s,v$, and $C_2>0$ depends on $\d,\s,\mu$. This form of control was first proved by Lebl\'{e} and the third author in \cite{leble_fluctuations_2018} in the 2D Coulomb case, then generalized to the super-Coulomb case in \cite{serfaty_mean_2020}. The proof relied on identifying  a {\it stress-energy tensor} structure in \eqref{15} and using integration by parts. Subsequently, the second author \cite{rosenzweig_mean-field_2020} observed that the expressions \eqref{15}  may also be viewed as the quadratic form of a \textit{commutator}, akin to the famous Calder\'{o}n commutator \cite{calderon_commutators_1980, coifman_commutateurs_1978, christ_polynomial_1987, seeger_multilinear_2019}. Indeed, ignoring the exclusion of the diagonal,
\begin{equation}
    \int_{(\R^\d)^2} (v(x)-v(y))\cdot\nabla\g(x-y)f(x)g(y)dxdy=\cd\Big\langle f ,\Big[v\cdot,\nabla |\nabla|^{\s-\d}\Big] g \Big\rangle_{L^2},
\end{equation}
where $\ipp{\cdot,\cdot}_{L^2}$ denotes the $L^2$ inner product. For this reason, estimates of the form \eqref{eq:introcomm} are called \emph{commutator estimates}.

This commutator perspective was used by the last two authors and Q.H. Nguyen \cite{nguyen_mean-field_2022} to show the desired functional inequality for the full singular regime as well as a broader class of Riesz-type interactions including Lennard-Jones-type potentials. The argument of \cite{nguyen_mean-field_2022} has also been extended to the nonsingular regime \cite{rosenzweig_wasserstein_nodate}. As recognized in \cite{rosenzweig_sharp_nodate}, the stress-energy and commutator perspectives are in fact two sides of the same coin. These functional inequalities are crucial not only for proving CLTs for fluctuations of Riesz gases \cite{leble_fluctuations_2018, serfaty_gaussian_2023, peilen_local_2025}, but also for deriving mean-field limits \cite{serfaty_mean_2020,rosenzweig_mean-field_2022-1,rosenzweig_mean-field_2022,nguyen_mean-field_2022,chodron_de_courcel_sharp_2023,rosenzweig_sharp_nodate, porat_singular_2025} and large deviation principles \cite{hess-childs_large_2023}. They have also been used to show the joint classical and mean-field limits for quantum systems of particles \cite{GP2022},  and the supercritical mean-field limits of classical \cite{han-kwan_newtons_2021, rosenzweig_rigorous_2023, menard_mean-field_2024, rosenzweig_lake_2025} and quantum systems of particles \cite{rosenzweig_quantum_2021, porat_derivation_2023}.

Let us also mention that for gradient dynamics at positive temperature (i.e. with Brownian noise in the microscopic dynamics \eqref{eq:MFode}), the modulated energy combines naturally with the previously used relative entropy \cite{jabin_quantitative_2018, guillin_uniform_2024, feng_quantitative_2024, rosenzweig_relative_2024} in the form of the \emph{modulated free energy} \cite{bresch_mean-field_2019, bresch_modulated_2019, bresch_mean_2023, chodron_de_courcel_sharp_2023, rosenzweig_modulated_2025, rosenzweig_relative_2024, cai_propagation_2024}, which is well-suited to proving entropic propagation of chaos and can even handle logarithmically attractive interactions \cite{bresch_mean-field_2019, bresch_mean_2023, chodron_de_courcel_attractive_2025}. Commutator estimates play the same essential role in the positive temperature setting.

The main problem for these functional inequalities has been to determine the optimal size of the additive error, i.e.~the exponent $\alpha$ in \eqref{eq:introcomm}, which a priori depends on $\d$ and $\s$. In formulating optimality, the appropriate comparison is with the minimal size of $\Fr_N(\ux_N,\mu)$. When $0\le \s<\d$, it is known that $\Fr(\ux_N,\mu) + \frac{\log(N\|\mu\|_{L^\infty})}{2\d N}\indic_{\s=0}\ge -C\|\mu\|_{L^\infty}^{\frac\s\d}N^{\frac{\s}{\d}-1}$ for $C=C(\d,\s)>0$ and this lower bound is sharp \cite{rosenzweig_sharp_nodate, hess-childs_sharp_2025}.\footnote{The term $\frac{\log(N\|\mu\|_{L^\infty})}{2\d N}$ in the $\s=0$ case is \emph{not} an additive error like $\|\mu\|_{L^\infty}^{\frac\s\d}N^{\frac{\s}{\d}-1}$, but rather a consequence of the fact that the right quantity to consider is $\Fr(\ux_N,\mu) + \frac{\log(N\|\mu\|_{L^\infty})}{2\d N}\indic_{\s=0}$.} In controlling the magnitude of \eqref{15}, one needs a right-hand side which is also nonnegative. Thus, the best error one may hope for is of size $N^{\frac{\s}{\d}-1}$. When $-2<\s<0$, the modulated energy is nonnegative (it is a squared MMD as mentioned above), and its minimal value is $\propto N^{\frac\s\d-1}$ \cite{hess-childs_optimal_nodate}. Moreover, the functional inequality holds without additive error because the interaction is nonsingular \cite{rosenzweig_wasserstein_nodate}. 

Only very recently has the program to show the sharp $N^{\frac{\s}{\d}-1}$ error rate been completed by the authors and collaborators: first, the Coulomb case \cite{leble_fluctuations_2018,serfaty_gaussian_2023, rosenzweig_rigorous_2023}, then the entire (super-)Coulomb case \cite{rosenzweig_sharp_nodate}, and finally the entire singular Riesz case \cite{hess-childs_sharp_2025}. The completion of this program has yielded the sharp rates of convergence for the mean-field limit in the modulated energy distance. It has also been crucial for studying fluctuations around the mean-field limit in and out of thermal equilibrium \cite{leble_fluctuations_2018,serfaty_gaussian_2023,peilen_local_2025,huang_fluctuations_nodate, rosenzweig_commutator_nodate, rosenzweig_cumulants_nodate}.

    Summarizing the state of the art for these commutator estimates is the following theorem, combining the results of \cite{nguyen_mean-field_2022,rosenzweig_global--time_2023, rosenzweig_sharp_nodate,hess-childs_sharp_2025, rosenzweig_wasserstein_nodate}. In particular, taking $p=\infty$ in \eqref{eq:FIsupC}, \eqref{eq:FIsubC1} below yields the announced sharp $N^{\frac\s\d-1}$ additive error.
    
    To compactify the notation, given a point configuration $\XN$, reference measure $\mu$, and vector field $v$, let us write
    \begin{align}\label{eq:A1def}
        \As_1[\XN,\mu,v] \coloneqq \int_{(\R^\d)^2\setminus\triangle}\nab\g(x-y)\cdot(v(x)-v(y))d\Big(\frac1N\sum_{i=1}^N\delta_{x_i}-\mu\Big)^{\otimes 2}(x,y).
    \end{align}
    The subscript $1$ indicates that $\As_1$ is a first-order variation/commutator, a point whose significance  will become clearer in \cref{ssec:RQhigher}.

    \begin{thm}\label{thm:FI'}
    Let $-2<\s<\d$. Let $\mu \in L^1\cap L^p$ for $\frac{\d}{\d-\s}<p\le \infty$ with $\int_{\R^\d}d\mu = 1$, and associated to $\mu$, define the length scales
    \begin{align}
        \la &\coloneqq (N\|\mu\|_{L^p})^{-\frac{1}{\d}}\label{eq:ladef},\\\
        \ka &\coloneqq (N^{\frac{1}{\s+1}}\|\mu\|_{L^p})^{-\frac{1}{\d}}, \label{eq:kadef}
    \end{align}
    When $-2<\s\le 0$, assume that $\int_{(\R^\d)^2}|\g|(x-y)d|\mu|^{\otimes 2}<\infty$.\footnote{{This condition is to ensure that the modulated energy is finite. When $0<\s<\d$, it is implied by the assumption that $\mu\in L^1\cap L^p$ for $p>\frac{\d}{\d-\s}$. When $\s=0$, it is implied by a logarithmic moment assumption $\int_{\R^\d}\log(1+|x|)d|\mu|<\infty$; and when $-2<\s<0$, it is implied by the moment assumption $\int_{\R^\d}|x|^{|\s|/2}d|\mu|<\infty$.}\label{fn:finen}}    Let $v$ be a Lipschitz vector field. Then for any pairwise distinct point configuration $\XN\in(\R^\d)^N$, the following hold.
    
        \cite{rosenzweig_sharp_nodate} If $\s\ge \max(0,\d-2)$, then
        \begin{align}\label{eq:FIsupC}
            | \As_1[\XN,\mu,v]|
            \le C\|\nab v\|_{L^\infty}\Big(\Fr_N(\XN,\mu) - \frac{\log \la}{2N}\indic_{\s=0} + C_p\|\mu\|_{L^p}\la^{\frac{\d(p-1)}{p}-\s} \Big).
        \end{align}

        \cite{hess-childs_sharp_2025} If $0\le \s<\d-2$, then for any $\as \in (\d,\d+2)$, 
        \begin{multline}\label{eq:FIsubC1}
            | \As_1[\XN,\mu,v]| \\
            \le             C(\|\nab v\|_{L^\infty} + C_\as\|\Dm^{\frac{\as}{2}}v\|_{L^{\frac{2\d}{\as-2}}}\indic_{\as>2}) \Big(\Fr_N(\XN,\mu) - \frac{\log \la}{2N}\indic_{\s=0} + C_p\|\mu\|_{L^p}\la^{\frac{\d(p-1)}{p}-\s} \Big),
        \end{multline}
        \cite{nguyen_mean-field_2022,rosenzweig_global--time_2023} and for any $\frac{\d}{\d-\s-1}<p\le \infty$, 
        \begin{multline}\label{eq:FIsubC2}
            | \As_1[\XN,\mu,v]|
           \\ \le C(\|\nab v\|_{L^\infty} + \|\Dm^{\frac{\d-s}{2}}v\|_{L^{\frac{2\d}{\d-\s-2}}} )\Big(\Fr_N(\XN,\mu) + C_p\|\mu\|_{L^p}\ka^{\frac{\d(p-1)}{p}-\s}(1-(\log\ka) \indic_{\s=0})\Big).
        \end{multline}
        
        \cite{rosenzweig_wasserstein_nodate} If $-2<\s <0$, then
        \begin{align}\label{eq:FInonsing}
             |\As_1[\XN,\mu,v]| \le C(\|\nab v\|_{L^\infty} + \|\Dm^{\frac{\d-\s}{2}}v\|_{L^{\frac{2\d}{\d-\s-2}}}\indic_{\s<\d-2})\Fr_N(\XN,\mu)
        \end{align}
    In all cases, the constant $C>0$ depends only on $\d,\s$ and the constants $C_\as, C_p>0$ additionally depend on $\as,p$, respectively.    
    \end{thm}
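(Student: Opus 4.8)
Since the four estimates above are established in \cite{nguyen_mean-field_2022, rosenzweig_global--time_2023, rosenzweig_sharp_nodate, hess-childs_sharp_2025, rosenzweig_wasserstein_nodate}, the plan is to recall the common skeleton of those arguments and indicate where the regimes bifurcate. The starting point is the \emph{electric reformulation}: writing $\g$ as a multiple of the order-$(\d-\s)$ Riesz kernel, the modulated potential $h \coloneqq \g*(\mu_N-\mu)$ extends to a field $\tilde h$ on $\R^\d\times\R^k$ solving a degenerate-elliptic equation $\div(|z|^\gamma\nab\tilde h) = \cd(\mu_N-\mu)\delta_{\R^\d}$, where one takes $k=1$, $\gamma = \s-\d+1$ in the (super-)Coulomb range $\s\ge\max(0,\d-2)$ and a higher-order (or higher-codimension) extension when $\s<\d-2$. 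After excising a ball of radius $\eta_i$ about each $x_i$ and truncating the potential there, $\Fr_N(\XN,\mu)$ is comparable---up to errors described below---to the weighted Dirichlet energy $\int_{\R^{\d+k}}|z|^\gamma|\nab\tilde h|^2$.

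Next I would differentiate $\Fr_N$ along the transport $\I+tv$ and extend $v$ to the ambient half-space as a vector field $\Phi$. Integrating by parts against the stress-energy tensor $|z|^\gamma(\nab\tilde h\otimes\nab\tilde h - \tfrac12|\nab\tilde h|^2\,\mathrm{Id})$ yields an identity of the schematic form
\begin{equation*}
\As_1[\XN,\mu,v] = -\int_{\R^{\d+k}}|z|^\gamma\Big(\nab\Phi:\nab\tilde h\otimes\nab\tilde h - \tfrac{\div\Phi}{2}|\nab\tilde h|^2\Big) + (\text{truncation and diagonal errors}),
\end{equation*}
which is exactly the commutator structure behind \eqref{eq:introcomm}. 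The leading term is bounded by $\|\nab\Phi\|_{L^\infty}\int|z|^\gamma|\nab\tilde h|^2$. When $k=1$, a judicious choice of extension gives $\|\nab\Phi\|_{L^\infty}\lesssim\|\nab v\|_{L^\infty}$, and combined with the energy comparison this produces \eqref{eq:FIsupC}. When $\s<\d-2$, the order $\d-\s>2$ forces a higher-order extension and a single $L^\infty$ gradient of $\Phi$ no longer controls the main term; instead one Hölder-pairs the extra derivatives of $\tilde h$ against extra derivatives of $v$, which is precisely why the auxiliary norms $\|\Dm^{\as/2}v\|_{L^{2\d/(\as-2)}}$ (with $\as\in(\d,\d+2)$, giving the sharp error, as in \cite{hess-childs_sharp_2025}) or $\|\Dm^{(\d-\s)/2}v\|_{L^{2\d/(\d-\s-2)}}$ (the scaling-natural choice of \cite{nguyen_mean-field_2022, rosenzweig_global--time_2023}) enter; the interpolation must be arranged so that only $\Fr_N$, and no higher Sobolev norm of $\mu$, appears on the right. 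For $-2<\s<0$ there is no diagonal to remove and no truncation is needed, so $\As_1$ is treated directly as an off-diagonal bilinear form via the same extension identity, which is why \eqref{eq:FInonsing} carries no additive error.

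The remaining step is the choice of the truncation radii $\eta_i$ and the estimation of the resulting errors. Taking the uniform scale $\eta_i\equiv\la = (N\|\mu\|_{L^p})^{-1/\d}$ and using $\mu\in L^p$ to bound both the number of points and the local mass of $\mu$ in each ball yields the additive error $C_p\|\mu\|_{L^p}\la^{\d(p-1)/p-\s}$, together with the $-\frac{\log\la}{2N}\indic_{\s=0}$ correction from the logarithmic self-energy; at $p=\infty$ this is the announced sharp $N^{\s/\d-1}$ rate. The variant \eqref{eq:FIsubC2} instead works at the larger scale $\ka$, trading a weaker hypothesis on $\mu$ (only $p>\d/(\d-\s-1)$) for a worse error. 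The finiteness assumption $\int|\g|(x-y)\,d|\mu|^{\otimes2}<\infty$ in the case $\s\le0$ is used only to guarantee that $\Fr_N$ is well-defined and that the integrations by parts are legitimate.

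The genuine difficulty lies entirely in this error analysis. Extracting the \emph{sharp} power of $\la$ requires a careful renormalization of the self-interaction---via a ball-growth/monotonicity argument or a precise next-order expansion of the Riesz energy---while in the sub-Coulomb regime one must simultaneously keep the higher-order extension under control and make the Sobolev pairings close against $\Fr_N$ alone. These are exactly the technical cores of \cite{rosenzweig_sharp_nodate} and \cite{hess-childs_sharp_2025}, respectively; once the extension framework is in place, the (super-)Coulomb and the nonsingular cases are comparatively soft.
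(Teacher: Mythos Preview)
The paper does not itself prove this theorem; it is stated as a compilation of results from the cited references, so there is no proof in the paper to compare against. Your proposal is therefore a sketch of what those references do, and on that score it is only partially accurate.

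Your description of the (super-)Coulomb case \eqref{eq:FIsupC} is broadly right: \cite{rosenzweig_sharp_nodate} does go through the Caffarelli--Silvestre extension and the stress-energy tensor, with the truncation/renormalization producing the additive error. But your account of the sub-Coulomb estimates \eqref{eq:FIsubC1}, \eqref{eq:FIsubC2} is not what those papers actually do. Neither \cite{nguyen_mean-field_2022} nor \cite{hess-childs_sharp_2025} uses a ``higher-order extension'' in the sense you describe. As the present paper explains in \cref{ssec:RQsubC,ssec:RCloc}, \cite{nguyen_mean-field_2022} works directly with the commutator as a Calder\'on-type object via Fourier methods (combined with a charge-smearing/averaging renormalization), while \cite{hess-childs_sharp_2025} represents $\g$ as a superposition $\int_\eta^\infty t^{\d-\s}\phi_t\,\frac{dt}{t}$ of dilated Bessel potentials $\phi$ and proves a commutator estimate for $\phi$ via Kato--Ponce; the choice $\hat\phi(\xi)=\jp{2\pi\xi}^{-\as}$ with $\as\in(\d,\d+2)$ is exactly what forces the auxiliary norm $\|\Dm^{\as/2}v\|_{L^{2\d/(\as-2)}}$, not any H\"older pairing of ``extra derivatives of $\tilde h$.'' The paper is explicit that these sub-Coulomb proofs rely on ``intermediary nonlocal commutator estimates'' and that the stress-energy tensor structure is \emph{absent} in this range. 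Similarly, the nonsingular estimate \eqref{eq:FInonsing} from \cite{rosenzweig_wasserstein_nodate} extends the commutator approach of \cite{nguyen_mean-field_2022}, not an extension identity.

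Your renormalization paragraph is also too coarse: the sharp errors do not come from a uniform choice $\eta_i\equiv\la$ but from nearest-neighbor-type scales (cf.\ \cref{cor:MEcount}), and the two sub-Coulomb references use genuinely different schemes (averaging vs.\ potential truncation), as noted in \cref{ssec:RQsubC}.
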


    Given a measure $\nu$ with density in $\Sc(\R^\d)$ and $\int_{\R^\d}d\nu=1$, letting $f\coloneqq \nu-\mu$, we can find a sequence  of pairwise distinct point configurations $\XN$ such that, as $N \to \infty$,
    \begin{align}
        \Fr_N(\XN,\mu)\rightarrow \frac12\int_{(\R^\d)^2}\g(x-y)f(x)f(y)dxdy = \frac{\cd}{2}\|f\|_{\dot{H}^{\frac{\s-\d}{2}}}^2
    \end{align}
    and
    \begin{align}
        \As_1[\XN,\mu,v] \rightarrow \frac12\int_{(\R^\d)^2}\nab\g(x-y)\cdot (v(x)-v(y))f(x)f(y)dxdy =\frac12\Big\langle f,\comm{v\cdot}{\nab \Dm^{\s-\d}}f\Big\rangle_{L^2}.
    \end{align}
    Thus, a commutator estimate with an additive error involving the singular zero-mass measure $\frac1N\sum_{i=1}^N\delta_{x_i}-\mu$ implies a commutator estimate without an additive error for the regular zero-mass measure $f$. More precisely, \cref{thm:FI'} implies the following  standard or \emph{unrenormalized} commutator estimate (see \cite[Proposition 3.1]{nguyen_mean-field_2022}).
    
    \begin{prop}\label{prop:FI'}
        Let $\d\geq 1$ and $-2<\s<\d$. Then there exists $C>0$, depending only on $\d$ and $\s$, such that for all $f,g\in\mathcal{S}(\R^\d)$,\footnote{If $\s=0$, then implicit is the requirement that $f$ and $g$ are zero-mean to avoid the low-frequency divergence in $\dot{H}^{-\frac{\d}{2}}(\R^\d)$.} we have
		\begin{multline}\label{eq:CE}
			\bigg|\int_{(\R^\d)^2} (v(x)-v(y))\cdot\nabla\g(x-y)f(x)g(y)dxdy\bigg|
			\\\leq C\Big(\|\nabla v\|_{L^\infty}+\||\nabla|^{\frac{\d-\s}{2}} v\|_{L^\frac{2\d}{\d-\s-2}}\indic_{\s<\d-2}\Big)\|f\|_{\dot{H}^{\frac{\s-\d}{2}}}\|g\|_{\dot{H}^{\frac{\s-\d}{2}}}.
		\end{multline}
    \end{prop}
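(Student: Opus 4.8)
The plan is to deduce \cref{prop:FI'} from \cref{thm:FI'} by the limiting argument sketched in the paragraph immediately preceding the statement. We may assume the right-hand side of \eqref{eq:CE} is finite, so that in particular $v$ is Lipschitz and \cref{thm:FI'} is applicable, and when $\s\le 0$ we may assume $f,g$ are zero-mean (otherwise the right-hand side is infinite). Write $B(f,g)$ for the bilinear form on the left of \eqref{eq:CE}. Its kernel is symmetric under $x\leftrightarrow y$, since both $\nab\g(x-y)$ and $v(x)-v(y)$ change sign, so $B(f,g)=B(g,f)$; polarizing, $B(f,g)=\tfrac14\big(B(f+g,f+g)-B(f-g,f-g)\big)$, and then replacing $(f,g)$ by $(tf,t^{-1}g)$ and optimizing over $t>0$ (using bilinearity and homogeneity together with the parallelogram law), it suffices to prove the quadratic estimate $|B(h,h)|\le A\|h\|_{\dot H^{\frac{\s-\d}{2}}}^2$, with $A$ the velocity factor in \eqref{eq:CE}, for $h\in\Sc(\R^\d)$ — zero-mean when $\s\le 0$, arbitrary when $\s>0$.

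For the zero-mean quadratic estimate I would represent $h=c(\nu_1-\nu_2)$ with $c>0$ and probability densities $\nu_1,\nu_2\in\Sc(\R^\d)$: choose a Schwartz probability density $\psi$ and $c>0$ with $2c\psi\ge|h|$ — possible because $|h|$ is dominated by a Schwartz function, e.g.\ by a mollification from above of the continuous, super-polynomially decaying majorant $x\mapsto\sup_{|y-x|\le1}|h(y)|$ — and set $\nu_{1,2}\coloneqq\psi\pm h/(2c)$, which are Schwartz probability densities since $\int h=0$. By bilinearity it then suffices to bound $|B(\nu_1-\nu_2,\nu_1-\nu_2)|$. Applying the quantization statement recalled above, with $\nu=\nu_1$ and reference measure $\mu=\nu_2$, produces pairwise distinct configurations $\XN\in(\R^\d)^N$ along which $\Fr_N(\XN,\nu_2)\to\tfrac{\cd}{2}\|\nu_1-\nu_2\|_{\dot H^{\frac{\s-\d}{2}}}^2$ and $\As_1[\XN,\nu_2,v]\to\tfrac12 B(\nu_1-\nu_2,\nu_1-\nu_2)$. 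Feeding these into \cref{thm:FI'} with $p=\infty$ — \eqref{eq:FIsupC} if $\s\ge\max(0,\d-2)$, \eqref{eq:FIsubC2} if $0\le\s<\d-2$, and \eqref{eq:FInonsing} if $-2<\s<0$, each of which carries exactly the velocity factor $A$ of \eqref{eq:CE} — and noting that the length scales $\la,\ka$ of \eqref{eq:ladef}--\eqref{eq:kadef} tend to $0$ as $N\to\infty$ while $\d-\s>0$, so that every additive error term (including the logarithmic $\indic_{\s=0}$ contributions) vanishes in the limit, we let $N\to\infty$ and obtain $|B(\nu_1-\nu_2,\nu_1-\nu_2)|\le CA\|\nu_1-\nu_2\|_{\dot H^{\frac{\s-\d}{2}}}^2$ after absorbing $\cd$ into $C$. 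Rescaling by $c^2$ completes the zero-mean quadratic estimate.

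When $\s>0$ one must still extend the quadratic estimate from zero-mean $h$ to arbitrary $h\in\Sc(\R^\d)$. Fix a Schwartz function $\phi$ with $\int\phi=1$ and set $\phi_\vep(x)\coloneqq\vep^\d\phi(\vep x)$; then $h-(\int h)\phi_\vep$ is zero-mean and converges to $h$ in $\dot H^{\frac{\s-\d}{2}}$ as $\vep\to0$ — this is precisely where $\s>0$ enters, since $\|\phi_\vep\|_{\dot H^{\frac{\s-\d}{2}}}^2=\vep^{\s}\|\phi\|_{\dot H^{\frac{\s-\d}{2}}}^2$ with $\|\phi\|_{\dot H^{\frac{\s-\d}{2}}}<\infty$ exactly when $\s>0$ — while a short computation using the homogeneity of $\nab\g$ and the Lipschitz bound on $v$ shows $B(h-(\int h)\phi_\vep,h-(\int h)\phi_\vep)\to B(h,h)$. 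Passing to the limit in the bound from the previous paragraph then gives the quadratic estimate for all $h$, and with Step 1 the full inequality \eqref{eq:CE} follows for all $f,g\in\Sc(\R^\d)$.

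The step I am treating as given — and which I expect is the only real work — is the quantization lemma underlying the convergences used above: that along a single sequence of pairwise distinct configurations $\XN$ one has both $\Fr_N(\XN,\mu)\to\tfrac{\cd}{2}\|\nu-\mu\|_{\dot H^{\frac{\s-\d}{2}}}^2$ and $\As_1[\XN,\mu,v]\to\tfrac12\int_{(\R^\d)^2}\nab\g(x-y)\cdot(v(x)-v(y))\,d(\nu-\mu)^{\otimes2}$. The natural route is to take $\XN$ i.i.d.\ with law $\nu$ and run a law of large numbers for the resulting $U$-statistics; what makes this succeed is that the diagonal-excised kernels $\g(x-y)$ and $(v(x)-v(y))\cdot\nab\g(x-y)$ are locally integrable — the first since $\s<\d$, the second because the cancellation $|v(x)-v(y)|\lesssim\|\nab v\|_{L^\infty}|x-y|$ improves the $|x-y|^{-\s-1}$ singularity of $\nab\g$ to an integrable $|x-y|^{-\s}$ — so the excised self-interactions are harmless and the off-diagonal sums converge in mean, hence along a subsequence almost surely, to the claimed integrals against $\nu^{\otimes2}$, $\nu\otimes\mu$, and $\mu^{\otimes2}$; one then picks a good realization. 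This is \cite[Proposition~3.1]{nguyen_mean-field_2022}.
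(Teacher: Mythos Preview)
Your proposal is correct and follows essentially the same route the paper sketches: quantize a Schwartz probability density, pass to the limit $N\to\infty$ in the appropriate case of \cref{thm:FI'} so that all additive errors vanish, and cite \cite[Proposition~3.1]{nguyen_mean-field_2022} for the quantization step. The paper itself does not give a detailed proof of \cref{prop:FI'}, treating it as an immediate consequence of \cref{thm:FI'} via precisely this limiting argument, so your write-up is in fact more explicit than the paper's own treatment (in particular the polarization reduction to the quadratic case and the $\phi_\vep$ extension to non-zero-mean $h$ when $\s>0$ are details the paper leaves implicit).
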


    It is straightforward to check that the preceding right-hand side scales the same way as the left-hand side, and that both $v$ terms on the right-hand side scale the same way as well. Moreover, the presence of the $\dot{H}^{\frac{\s-\d}{2}}$ seminorm is natural, given this corresponds to the square root of the energy associated to $\g$.
    
    In fact, the philosophy of \cite{rosenzweig_mean-field_2020, nguyen_mean-field_2022, rosenzweig_sharp_nodate}, and to a more implicit degree the works \cite{leble_fluctuations_2018,  duerinckx_mean-field_2016, serfaty_mean_2020, serfaty_gaussian_2023}, proceeds in the reverse order: first show the {unrenormalized} commutator estimate of \cref{prop:FI'}, then combine it with a renormalization procedure, implemented via charge smearing (i.e. regularization of the $\delta_{x_i}$), to allow for the irregular test measure $\frac1N\sum_{i=1}^N\delta_{x_i}$. It is this renormalization step, which is the most technically involved part of the proof, that leads to the additive $N$-dependent errors in the singular case $0\le \s<\d$. For this reason, the estimates \eqref{eq:FIsupC}, \eqref{eq:FIsubC1}, \eqref{eq:FIsubC2} are sometimes referred to as \emph{renormalized} commutator estimates to distinguish them from the unrenormalized estimate \eqref{eq:CE}. In the nonsingular case $-2<\s<0$, no renormalization is necessary, and one can apply a commutator estimate directly, leading to the absence of additive errors in \eqref{eq:FInonsing}.

    \subsection{Transport regularity}\label{ssec:introObj}
    The question we take up in this paper is the optimal regularity assumption for the transport velocity $v$ in the aforementioned commutator estimates. Compared to sharpness of the additive errors in these estimates, discussed above, this question has received little attention in the literature.
    
   In this direction, it is natural to ask whether the Lipschitz assumption on $v$ may be weakened while still preserving the scaling. For example, the space of \emph{bounded mean oscillation} (BMO) \cite{https://doi.org/10.1002/cpa.3160140317} is a common relaxation of $L^\infty$, and one can ask whether $\|\nab v\|_{L^\infty}$ may be replaced by $\|\nab v\|_{BMO}$. It is also unclear whether, in the sub-Coulomb regime $\s<\d-2$, the additional term of the form $\|\Dm^{\frac{\as}{2}}v\|_{L^{\frac{2\d}{\as-2}}}$ is genuinely necessary and how small the exponent $\as$ may be taken (by Sobolev embedding, $\|\Dm^{\frac{\as}{2}}v\|_{L^{\frac{2\d}{\as-2}}} \lesssim \|\Dm^{\frac{\as'}{2}}v\|_{L^{\frac{2\d}{\as'-2}}}$ for $2(\d+1)>\as'\ge \as$). Moreover, there is a gap between our two sub-Coulomb estimates \eqref{eq:FIsubC1}, \eqref{eq:FIsubC2}. In the former, the $N$-dependent additive error is sharp but requires a stronger regularity condition on $v$ (given that $\as>\d$ by assumption) compared to the latter, in the $N$-dependent error is not sharp. 

    


    This regularity question is not only for the sake of mathematics, but is also motivated by applications. It is a general principle in PDE that if an equation has a scaling invariance, then a natural function space to study the equation is one that is invariant under the equation's scaling \cite{klainerman_pde_2010}. Such \emph{scaling-critical} or \emph{scaling-invariant} spaces are expected to be thresholds for the well-posedness/ill-posedness of the equation along with other phenomena, such as finite-time blowup vs. global existence.

For $a,b,c>0$, set $\tl{\mu}(t,x) \coloneqq  a\mu(bt,cx)$. When $\mathsf{V}^t = 0$, the reader may check that if $\mu$ is a solution to \eqref{eq:MFlim}, then $\tl{\mu}$ is also a solution provided that $bc^{\d-2-\s}=a$. Fixing $b=1$ so that the solution lifespan is preserved under rescaling, we arrive at the relation $a=c^{\d-2-\s}$. We seek (semi)norms that are invariant under the transformation $\mu\mapsto \tl{\mu}$.

For $1\leq p\leq\infty$, the reader may check that
\begin{equation}
\|\tl{\mu}\|_{L^p} = c^{\d-\s-2-\frac{\d}{p}} \|\mu\|_{L^p},
\end{equation}
which implies that $p=\frac{\d}{\d-\s-2}$ is the scaling-critical exponent. It is possible to choose such a $p$ if and only if $\s\leq \d-2$. In other words, the potential can be at  most Coulomb in order for there to be a critical $L^p$ space. More generally, letting $\dot{W}^{a,p}$ denote the homogeneous Bessel potential (fractional Sobolev) space (e.g. see \cite[Section 3.3]{stein_singular_1970} or \cite[Section 1.3]{grafakos_modern_2014}), we see that
\begin{equation}
\|\tl\mu\|_{\dot{W}^{a,p}} = c^{\d-\s-2 + a -\frac{\d}{p}}\|\mu\|_{\dot{W}^{a,p}},
\end{equation}
which shows that $\dot{W}^{a,p}$ is a critical space if and only if $a=\frac{\d}{p} + \s+2-\d$. Other scales of function spaces, such as H\"older-Zygmund spaces (e.g. see \cite[Section 4]{stein_singular_1970}), are also interesting to consider as critical spaces. We will return to this point in the next section.


When $\mu \in \dot{W}^{\frac{\d}{p} + \s+2-\d,p}$, for $1<p<\infty$, some basic Fourier analysis shows that the associated mean-field velocity $v=\M\nabla\g\ast\mu$ is in $\dot{W}^{\frac{\d}{p}+1,p}$.  Unfortunately, $\dot{W}^{\frac{\d}{p}+1,p}$ does not embed into the homogeneous Lipschitz space $\dot{W}^{1,\infty}$. If $f\in \dot{W}^{\frac{\d}{p}+1,p}$, then one only has $\|\nab f\|_{BMO} \leq C\|f\|_{\dot{W}^{\frac{\d}{p}+1,p}}$. Even when $\s+2-\d=k$ is an integer and taking $p=\infty$, having $\mu\in\dot{W}^{k,\infty}$ does not imply that $v\in \dot{W}^{1,\infty}$. This is problematic because all of the commutator estimates of \cref{thm:FI'} feature $\|\nab v\|_{L^\infty}$ on the right-hand side. Thus, to show mean-field convergence when the mean-field density $\mu^t$ belongs to the critical space, it would be desirable to have a commutator estimate with a regularity assumption that is satisfied when $\mu\in \dot{W}^{\frac{\d}{p} + \s+2-\d,p}$.

\subsection{Our contributions}
Having laid out the question of interest concerning transport regularity in commutator estimates for the modulated energy and motivated such inequalities by their role in studying mean-field and related limits, we come to the new contributions of the present paper. We describe them here at a high level. Precise statements of our main results (\Cref{thm:CEF1,thm:CEF2,thm:FI,thm:MF} and \cref{cor:CEF2}) and detailed comments about the proofs are deferred to \cref{sec:MR}.
    
Our first contribution is to show that estimate~\eqref{eq:CE} can fail if either of the vector field norms is weakened (\cref{thm:CEF1}). We also derive necessary conditions for analogous commutator estimates to hold for a broader class of interaction potentials (\cref{thm:CEF2}). In particular, the norm controlling $v$ must involve enough derivatives to compensate for the high-frequency decay of $\hat\g$. This demonstrates a more general principle of regularity trade-off between the interaction and transport for commutator estimates. All of these results are obtained through explicit counterexamples. Given that that the renormalized commutator estimates of \cref{thm:FI'} imply unrenormalized commutator estimates, as explained in \cref{ssec:introMEcomm}, it suffices to show these counterexamples when $\frac1N\sum_{i=1}^N\delta_{x_i}-\mu$ is replaced by a zero-mean Schwartz function.

Our second contribution concerns mean-field convergence when the solution $\mu^t$ of the mean-field equation belongs to the critical space $\dot{W}^{\frac{\d}{p}+\s+2-\d,p}$ for $1<p<\infty$. Provided that $p \le \frac{2\d}{\d-\s-2}$, one has $\|\Dm^{\frac{\d-\s}{2}}u^t\|_{L^{\frac{2\d}{\d-\s-2}}} \lesssim \|\mu^t\|_{\dot{W}^{\frac{\d}{p}+\s+2-\d,p}}$. But, since in general $\nab u^t$ is only in BMO when $\mu^t\in \dot{W}^{\frac{\d}{p}+\s+2-\d,p}$, the results described in the previous paragraph give a negative answer to our earlier question about the possibility of a commutator estimate holding in this case, regardless of the size of the additive error. That said, the situation is not completely hopeless. As recognized in \cite{rosenzweig_mean-field_2022-1,rosenzweig_mean-field_2022}, even in this borderline regularity case, one can still obtain a Gr\"onwall-type\footnote{More precisely, an estimate in the form of Osgood's lemma, which is a generalization of Gr\"onwall's lemma. See \cite[Section 3.1.1]{bahouri_fourier_2011}.} estimate for the modulated energy provided that a \emph{defective} commutator estimate holds.

More precisely, we say that a defective version holds if there exists a locally integrable function $\rho:[0,\infty)\rightarrow [0,\infty)$, called the \emph{defect}, such that for all $\ep>0$,
\begin{multline}\label{eq:commdefec}
    \int_{(\R^\d)^2\setminus\triangle}(v(x)-v(y))\cdot\nab\g(x-y)d\Big(\frac1N\sum_{i=1}^N\delta_{x_i}-\mu\Big)^{\otimes 2}(x,y) \\
    \le \rho(\ep)C_v\Big(\Fr_N(\XN,\mu) + C_\mu N^{-\alpha} \Big) + {\ep}^{\beta}N^{\gamma}C_v'\Big(\Fr_N(\XN,\mu) + C_\mu' N^{-\alpha} \Big),
\end{multline}
where $\alpha > 0$, $\beta,\gamma\ge 0$ and $C_v,C_v',C_\mu,C_\mu'>0$ are constants depending on $\d,\s$ and quantitatively on norms of $v,\mu$, respectively. 

If $\rho({\ep}) = O(|\log {\ep}|)$ as ${\ep}\rightarrow 0^+$, it turns out that a defective commutator estimate suffices to conclude an estimate for the modulated energy that in turn implies mean-field convergence, at least up to some time $T_*>0$ which is independent of $N$, but a priori is smaller than the maximal time of existence for $\mu^t$. These works considered only the Coulomb case, where the natural scale-invariant space is $L^\infty$, whose norm is a nonincreasing (conserved when $\M$ is antisymmetric) quantity when $\mathsf{V}^t=0$. In the present paper, we generalize the results of \cite{rosenzweig_mean-field_2022-1,rosenzweig_mean-field_2022} to the entire Riesz case without any restrictions on the interval of time for which mean-field convergence holds (\cref{thm:MF}). Importantly, in the sub-Coulomb case $\s<\d-2$, we show that if the initial mean-field density $\mu^\circ \in L^{\frac{\d}{\d-\s-2}}$, which is scaling-critical, then, there is a unique global solution to the mean-field equation \eqref{eq:MFlim} which is a (quantitative) mean-field limit of the microscopic system \eqref{eq:MFode}. 

The main technical result is a new defective commutator estimate (\cref{thm:FI}) with a defect $\rho(\ep) = O(|\log\ep|^{\theta})$ for $\theta\in (0,1)$, in contrast to the $\theta=1$ defect present in \cite{rosenzweig_mean-field_2022-1,rosenzweig_mean-field_2022}. This improvement is made possible by the celebrated \emph{Brezis-Wainger-Hansson inequality} \cite{brezis_note_1980,hansson_imbedding_1979}: if $1<p<\infty$, $1\le q\le \infty$, and $aq>\d$, then 
\begin{align}\label{eq:BW}
     \forall f\in\Sc(\R^\d), \qquad \|f\|_{L^\infty} \le C\|f\|_{\dot{W}^{\frac{\d}{p},p}}\Big(1+\log^{\frac{p-1}{p}}(1+\frac{\|f\|_{\dot{W}^{a,q}}}{\|f\|_{\dot{W}^{\frac{\d}{p},p}}})\Big),
\end{align}
where $C=C(\d,p,a,q)>0$. Remark that the exponent $\frac{p-1}{p}$ is sharp; and moreover, no estimate of the form
\begin{align}
    \|f\|_{L^\infty} \le \vep\|f\|_{\dot{W}^{\frac{\d}{p},p}}\Big(1+\log^{\frac{p-1}{p}}(1+\frac{\|f\|_{\dot{W}^{a,q}}}{\|f\|_{\dot{W}^{\frac{\d}{p},p}}})\Big) + C_\vep,
\end{align}
for any $\varepsilon>0$, can hold.

Brezis and Wainger \cite[Theorem 1]{brezis_note_1980} first proved the estimate \eqref{eq:BW}, which is a  generalization of an earlier $\d=p=2$ inequality due to Brezis and Gallou\"et \cite{brezis_nonlinear_1980}. In the PDE literature, inequalities of this form are sometimes called Brezis-Gallou\"et inequalities or logarithmic interpolation. Independently, Hansson \cite{hansson_imbedding_1979}  proved an inequality expressed in potential-theoretic language that is equivalent to \eqref{eq:BW}, and an alternative proof is also given in \cite{engler_alternative_1989}. 

The Brezis-Wainger-Hansson inequality is a substitute for the false endpoint Sobolev embedding $\|f\|_{L^\infty} \lesssim \|f\|_{\dot{W}^{\frac{\d}{p},p}}$, except when $\d=p=1$. 
Although functions in $\dot{W}^{\frac{\d}{p},p}$ are not bounded, they  are still exponentially integrable in a quantitative sense made precise by the famous Trudinger-Moser inequality \cite{trudinger_imbeddings_1967, moser_sharp_1970}. In some sense, the estimate \eqref{eq:BW} is dual to Trudinger-Moser (see \cite[Theorem A]{brezis_note_1980}).



\subsection{Notation}\label{ssec:introN}
Finally, let us review the basic notation used throughout the paper, mostly following the conventions of \cite{nguyen_mean-field_2022,rosenzweig_global--time_2023, rosenzweig_sharp_nodate, hess-childs_sharp_2025}.

Given nonnegative quantities $A$ and $B$, we write $A\lesssim B$ if there exists a constant $C>0$, independent of $A$ and $B$, such that $A\leq CB$. If $A \lesssim B$ and $B\lesssim A$, we write $A\sim B$. Throughout, $C$ will denote a generic constant that may change from line to line. 

$\N$ denotes the natural numbers excluding zero. {For $N\in\N$, we abbreviate $[N]\coloneqq \{1,\ldots,N\}$.} $\R_+$ denotes the positive reals. Given $x\in\R^\d$ and $r>0$, $B(x,r)$ and $\p B(x,r)$ respectively denote the ball and sphere centered at $x$ of radius $r$. Given a function $f$, we denote its support by $\supp f$. The notation $\nabla^{\otimes k}f$ denotes the $k$-tensor field with components $(\p_{i_1}\cdots\p_{i_k}f)_{1\leq i_1,\ldots,i_k\leq \d}$. {For $x\in\R$, $\jp{x}\coloneqq (1+|x|^2)^{1/2}$ is the Japanese bracket.}

$\P(\R^\d)$ denotes the space of Borel probability measures on $\R^\d$. If $\mu$ is absolutely continuous with respect to Lebesgue measure, we shall abuse notation by letting $\mu$ denote both the measure and its Lebesgue density. When the measure is clearly understood to be Lebesgue, we shall simply write $\int_{\R^{\d}}f$ instead of $\int_{\R^\d}fdx$.

We use the notation $\hat{f}$ or $\widehat{f}$ to denote the Fourier transform $\hat{f}(\xi) \coloneqq \int_{\R^\d}f(x)e^{-2\pi i \xi\cdot x}dx$ and let $\jp{\nab}$ and $\Dm$ denote the Fourier multipliers with symbols $\jp{2\pi \xi}$ and $|2\pi \xi|$, respectively.

Several function spaces appear in the paper.  $\Sc(\R^\d)$ and $\Sc'(\R^\d)$ denote the spaces of Schwartz functions and tempered distributions, respectively. For $s\in\R$ and $1<p<\infty$,  $W^{s,p}(\R^\d)$ denotes the inhomogeneous Bessel potential space  defined by
\begin{align}
    W^{s,p}\coloneqq \{f\in \Sc' : \jp{\nab}^sf \in L^p\}, \qquad \|f\|_{W^{s,p}} \coloneqq \|\jp{\nab}^sf\|_{L^p}.
\end{align}
When $s=k$ is an integer, $W^{s,p}$ agrees with the usual Sobolev space
\begin{align}
    W^{k,p} = \{f\in L^p : \nab^{\otimes j}f \in L^p, \ 0\le j\le k\}, \qquad \|f\|_{W^{k,p}} \coloneqq \sum_{j=0}^k \|\nab^{\otimes j}f\|_{L^p},
\end{align}
the latter, of course, making sense for the endpoints $p=1,\infty$ too. When $p=2$, we use the customary notation $W^{s,2}=H^s$. {The homogeneous Bessel potential space $\dot{W}^{s,p}(\R^\d)$ is defined by
\begin{align}
    \dot{W}^{s,p} \coloneqq \{f\in \Sc'/\mathscr{P} : \Dm^s f\in L^p\}, \qquad \|f\|_{\dot{W}^{s,p}} \coloneqq \|\Dm^s f\|_{L^p},
\end{align}
where $\mathscr{P}(\R^\d)$ denotes the space of polynomials on $\R^\d$ and $\Sc'/\mathscr{P}$ denotes the quotient space where we identify any two tempered distributions whose difference is a polynomial. This identification turns the seminorm $\|\cdot\|_{\dot{W}^{s,p}}$ into a norm. Similarly, for $s=k$ integer, $\dot{W}^{s,p}$ agrees with
\begin{align}
    \dot{W}^{k,p} = \{f\in\Sc'/\mathscr{P} : \nab^{\otimes k}f\in L^p\}, \qquad \|f\|_{\dot{W}^{k,p}} \coloneqq \|\nab^{\otimes k}f\|_{L^p}.
\end{align}
It is a basic fact that for $s \ge 0$, $f\in W^{s,p}$ iff $f\in L^p\cap\dot{W}^{s,p}$.} For $\theta>0$, we let $\mathcal{C}^\theta(\R^\d)$ denote the inhomogeneous H\"older-Zygmund space, which for $\theta\in (0,1]$ is defined by $f\in C_{loc}$ such that
\begin{align}
    \|f\|_{\mathcal{C}^\theta} \coloneqq \|f\|_{L^\infty} +  \sup_{|h|>0} \frac{\|f(\cdot+h)+f(\cdot-h)-2f(\cdot)\|_{L^\infty}}{|h|^{\theta}} < \infty,
\end{align}
and for $\theta>1$, we inductively define $\|f\|_{\mathcal{C}^\theta} \coloneqq \|f\|_{L^\infty} + \|\nab f\|_{\mathcal{C}^{\theta-1}}$. $\mathcal{C}^{\theta}$ coincides with the Besov space $B_{\infty,\infty}^\theta$, and the norms are equivalent (see \cite[Section 2.11]{bahouri_fourier_2011}). For noninteger values of $\theta$, it holds that $\mathcal{C}^\theta = C^{\lfloor{\theta}\rfloor, \{\theta\}}$, the space of $\lfloor{\theta}\rfloor$-times continuously differentiable functions such that the $\lfloor{\theta}\rfloor$-th derivative is $\{\theta\}$-H\"older continuous, which we endow with its usual norm. When $\theta$ is an integer, $\mathcal{C}^\theta$ is strictly larger than $W^{\theta,\infty}=C^{\theta-1,1}$. {The homogeneous H\"older-Zygmund space $\dot{\mathcal{C}}^\theta$ is defined to be the space of all $f\in C_{loc}$ such that
\begin{align}
    \|f\|_{\dot{\mathcal{C}}^\theta} \coloneqq \sup_{|h|>0} \frac{\|f(\cdot+h)+f(\cdot-h)-2f(\cdot)\|_{L^\infty}}{|h|^{\theta}} < \infty
\end{align}
when $\theta\in (0,1]$ and inductively for $\theta>1$ as before.}

As is customary in the literature, a superscript $\cdot$ denotes the homogeneous space/seminorm.


{

\subsection{Acknowledgments}\label{ssec:introAck}
The second author thanks the Courant Institute of Mathematical Sciences, NYU for its hospitality, where part of the work for this project was carried out during a visit in November 2021.} This paper is based upon work supported by the National Science Foundation under Grant No. DMS-2424139, while the first author was in residence at the Simons Laufer Mathematical Sciences Institute in Berkeley, California, during the Fall 2025 semester.

\section{Main results}\label{sec:MR}
In this section, we present the precise statements of our main results previously advertised in \cref{ssec:introObj} and provide some comments on their proofs.

    \subsection{BMO inequalities}\label{ssec:MRbmo}
	
	Our first result demonstrates that, in general, the Lipschitz control on the transport field in~\eqref{eq:CE} cannot be relaxed to $\|\nabla v\|_{BMO}$. \cref{thm:CEF1} below shows that, except for the 1D logarithmic endpoint $(\d,\s)=(1,0)$, such a replacement is false. When $(\d,\s)=(1,0)$, the commutator estimate \emph{does} in fact hold. 
	
    \begin{thm}\label{thm:CEF1}
    \
    \begin{enumerate}[(i)]
   \item\label{item:CEF11} Let $\d\ge 1$ and $-2<\s<\d$ with $(\d,\s) \ne (1,0)$. For every $M>0$, there exist a $C^\infty$ compactly supported vector field $v$ and zero-mean Schwartz functions $f$ and $g$ such that 
		\begin{equation}\label{eq:CEF1}
			\frac{\displaystyle \bigg|\int_{(\R^\d)^2}(v(x)-v(y))\cdot\nabla\g(x-y)f(x)g(y)\bigg|		}{\displaystyle \Big(\|\nabla v\|_{BMO}+\||\nabla|^{\frac{\d-\s}{2}} v\|_{L^{\frac{2\d}{\d-\s-2}}}\indic_{\s<\d-2}\Big)
				\|f\|_{\dot{H}^{\frac{\s-\d}{2}}}	\|g\|_{\dot{H}^{\frac{\s-\d}{2}}}}\geq M.
		\end{equation}

	\item\label{item:CEF12} If $(\d,\s)=(1,0)$, then there exists $C>0$ such that for all $v$ with $v'\in \BMO$ and all $f,g\in\mathcal{S}$ with zero mean, we have
		\begin{equation}\label{eq:BMOCE}
			\bigg|\int_{\R^2} \frac{v(x)-v(y)}{x-y}f(x)g(y)\bigg|\leq C\| v'\|_{BMO}\|f\|_{\dot{H}^{-\frac{1}{2}}}\|g\|_{\dot{H}^{-\frac{1}{2}}}.
		\end{equation}
    \end{enumerate}
	\end{thm}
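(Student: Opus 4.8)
My plan is to handle the two parts by quite different mechanisms, so I describe them separately.

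\medskip
\noindent\emph{Part (i): the counterexamples.} I would build these around the failure of the implication ``$\nab v\in L^\infty$'' $\Rightarrow$ ``$\nab v\in\BMO$'', amplified over many dyadic scales. The starting observation is that the commutator associated with a \emph{linear} field $x\mapsto Bx$, $B$ a constant $\d\times\d$ matrix, is a Fourier multiplier: a direct computation gives
\begin{equation*}
\int_{(\R^\d)^2}\big(Bx-By\big)\cdot\nab\g(x-y)\,f(x)g(y)\,dx\,dy=\int_{\R^\d}\wh f(\xi)\,\overline{\wh g(\xi)}\,m_B(\xi)\,d\xi,
\end{equation*}
with $m_B(\xi)=-\cd\,|\xi|^{\s-\d}\big(\operatorname{tr}B+(\s-\d)\tfrac{\xi\cdot B\xi}{|\xi|^2}\big)$. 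One checks that $m_B(\xi)/|\xi|^{\s-\d}$ is a nonzero function of $\xi/|\xi|$ unless $B=\la\I$ with $\la\s=0$, and \emph{this} is exactly where the exclusion $(\d,\s)\ne(1,0)$ is forced. So I would fix a symmetric matrix $A$ (one may take $A=\I$ when $\s\ne0$, so that $m_A=-\cd\s|\xi|^{\s-\d}$, and any traceless $A\ne0$ when $\s=0$, in which case $\d\ge2$) together with a real Schwartz profile $\phi$, normalized by $\|\phi\|_{\dot H^{\frac{\s-\d}{2}}}=1$, with $\wh\phi$ supported in a thin symmetric annulus-cone on which $m_A(\xi)/|\xi|^{\s-\d}$ keeps a fixed sign, so that $Q:=\int_{\R^\d}|\wh\phi|^2 m_A\ne0$. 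Then, for large parameters $K$ and $m_0$, I would take $f=g:=\sum_{k=1}^{K}f_k$, where $f_k$ is $\phi$ translated to $p_k:=2^{-k}e_1$ and dilated to scale $2^{-k-m_0}$, normalized so that $\|f_k\|_{\dot H^{\frac{\s-\d}{2}}}=1$; with the annulus of $\wh\phi$ thin, the $\wh f_k$ lie in disjoint dyadic shells, so $\|f\|_{\dot H^{\frac{\s-\d}{2}}}^2=K$. For the transport field I would take a $C^\infty_c$ function $v$ equal to $(\log\tfrac1{|x|})Ax$ on $\{\,2^{-K-m_0-10}\le|x|\le1\,\}$ and obtained near the origin from $(\log\tfrac1{\max(|x|,\delta)})Ax$ by mollification (and a cutoff near $|x|=2$), so that $v$ is genuinely Lipschitz. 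Since $\log\tfrac1{|x|}\in\BMO$ with $O(1)$ norm and this survives truncation, mollification, and cutoff, while $\nab\big((\log\tfrac1{|x|})Ax\big)=(\log\tfrac1{|x|})A-\tfrac{(Ax)\otimes x}{|x|^2}$ has bounded last term, we get $\|\nab v\|_{\BMO}\lesssim_{\d,\s}1$ \emph{uniformly in $K,m_0$}; and when $\s<\d-2$ a direct computation at the critical exponent $p=\tfrac{2\d}{\d-\s-2}$ (for which $\tfrac\d p=\tfrac{\d-\s-2}{2}$) gives $\|\Dm^{\frac{\d-\s}{2}}v\|_{L^p}\lesssim K^{1/p}$.

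\medskip
The proof of \eqref{eq:CEF1} would then rest on a diagonal-versus-off-diagonal split. On the (effective) support of $f_k$ --- a ball of radius $\lesssim2^{-k-m_0}$ about $p_k$, sitting well inside the region where $v=(\log\tfrac1{|x|})Ax$ --- Taylor expansion about $p_k$ with $\nab v(p_k)=(k\log2)A-(Ae_1)\otimes e_1$ and scale-invariance of the linear-field commutators show that the diagonal contribution of $f_k$ is $(k\log2)Q+O(1)$, the error being the fixed lower-order term plus a Taylor remainder of size $O(2^{-m_0})$; summing gives $\tfrac{\log2}{2}QK^2+O(K)$. The off-diagonal terms (pairs $f_k,f_l$, $k\ne l$) are, by the disjointness of scales \emph{and} the separation of locations (distance $\gtrsim2^{-\min(k,l)}$), together with the infinitely many vanishing moments of $\phi$, negligible relative to the diagonal once $m_0$ is large. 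Thus the numerator of \eqref{eq:CEF1} is $\gtrsim K^2$ while the denominator is $\lesssim\big(1+K^{1/p}\indic_{\s<\d-2}\big)K$; since $p>2$ whenever $\s<\d-2$, the quotient is $\gtrsim K^{1-\frac1p\indic_{\s<\d-2}}\ge K^{1/2}$, which exceeds any prescribed $M$ once $K$ is large. I expect the most delicate step to be making the off-diagonal estimate quantitative enough that it cannot conspire to cancel the diagonal --- this is precisely why the $f_k$ must be separated in location, not merely in scale.

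\medskip
\noindent\emph{Part (ii): the positive result.} Since $\nab\g(x)=-1/x$, the left side of \eqref{eq:BMOCE} is a constant multiple of $|\langle f,[M_v,H]g\rangle_{L^2}|$, with $M_v$ multiplication by $v$ and $H$ the Hilbert transform. Writing $f=\Dm^{1/2}F$, $g=\Dm^{1/2}G$ (so $F=\Dm^{-1/2}f\in L^2$, $\|F\|_{L^2}=\|f\|_{\dot H^{-1/2}}$, finite for zero-mean Schwartz $f$), the claim is equivalent to the operator bound $\|\Dm^{1/2}[M_v,H]\Dm^{1/2}\|_{L^2\to L^2}\lesssim\|v'\|_{\BMO}$. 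To prove this I would set $T:=[M_v,H]$ and use three facts. First, $H^2=-\I$ gives $TH=-HT$, so $T$ anticommutes with $H$; combined with $\p_x=-H\Dm$ this yields $[\p_x,T]=-H(\Dm T+T\Dm)$. Second, expanding the commutator directly gives $[\p_x,T]=[M_{v'},H]$, and the Coifman--Rochberg--Weiss theorem gives $\|[M_{v'},H]\|_{L^2\to L^2}\lesssim\|v'\|_{\BMO}$; as $H$ is an $L^2$-isometry this already bounds $\|\Dm T+T\Dm\|_{L^2\to L^2}$. Third, to transfer the half-derivatives: in Fourier variables the kernels of $\Dm T+T\Dm$ and $\Dm^{1/2}T\Dm^{1/2}$ differ by the factor $\tfrac{|\xi|^{1/2}|\eta|^{1/2}}{|\xi|+|\eta|}=\tfrac12\operatorname{sech}\!\big(\tfrac12(\log|\xi|-\log|\eta|)\big)=:\psi(\log|\xi|-\log|\eta|)$; since $\psi$ is Schwartz, $\psi(r)=\int\wh\psi(\tau)e^{ir\tau}\,d\tau$ with $\|\wh\psi\|_{L^1}<\infty$, so $\Dm^{1/2}T\Dm^{1/2}=\int\wh\psi(\tau)\,\Dm^{i\tau}(\Dm T+T\Dm)\Dm^{-i\tau}\,d\tau$, an average of unitary conjugates, whence $\|\Dm^{1/2}T\Dm^{1/2}\|_{L^2\to L^2}\le\|\wh\psi\|_{L^1}\|\Dm T+T\Dm\|_{L^2\to L^2}\lesssim\|v'\|_{\BMO}$. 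All identities are valid on Schwartz functions; since \eqref{eq:BMOCE} depends on $v$ only through $v'$, one first treats $v$ with $v'\in C^\infty_c$ (so $v$ is Lipschitz and $[M_v,H]$ is unambiguous) and then passes to general $v'\in\BMO$ via the standard approximation of $\BMO$ functions by smooth compactly supported ones of comparable norm, together with $H^1$--$\BMO$ duality. The heart of this part is the third fact: multiplying the Fourier kernel of an $L^2$-bounded operator by the degree-zero symbol $\tfrac{\sqrt{|\xi||\eta|}}{|\xi|+|\eta|}$ preserves $L^2$-boundedness, which is what lets one absorb the half-derivative on each side once the identity $[\p_x,[M_v,H]]=[M_{v'},H]$ has traded the regularity of $v$ for that of $v'$.
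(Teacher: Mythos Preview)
Your two parts take genuinely different routes from the paper, and the quality of the two halves is uneven.

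\medskip
\textbf{Part (ii).} Your argument is correct and rather slick. The paper instead sets $f=\Dm F$, $g=\Dm G$, rewrites the pairing as $\int v(F'\Dm G+G'\Dm F)$, integrates by parts, and splits into three pieces that are each controlled by Kenig--Ponce--Vega--type fractional Leibniz/commutator estimates from \cite{li_kato-ponce_2019}. Your route avoids those refined estimates entirely: the algebraic identity $[\p_x,[M_v,H]]=[M_{v'},H]$ together with $TH=-HT$ reduces the problem to the classical Coifman--Rochberg--Weiss bound, and your $\operatorname{sech}$ transference trick (writing $\Dm^{1/2}T\Dm^{1/2}$ as an $L^1$-average of unitary conjugates of $\Dm T+T\Dm$) cleanly handles the half-derivatives. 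This is more self-contained than the paper's proof. One small caution: the phrase ``standard approximation of $\BMO$ functions by smooth compactly supported ones of comparable norm'' is slightly loose since $\BMO$ is not separable; the right statement is that mollification gives $\|v_\ep'\|_{\BMO}\le\|v'\|_{\BMO}$ and the bilinear form is continuous under such approximation by dominated convergence (the log-Lipschitz bound on $v$ makes the integrand integrable for fixed Schwartz $f,g$).

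\medskip
\textbf{Part (i).} The paper's construction is considerably simpler than yours. It takes $v_1(x)=-x_1\log(\log(1/|x|))\chi(x)$, so that \emph{all} relevant norms of $v$ (the $\BMO$ norm of $\nab v$ when $\d=1$, and $\|\nab^{\otimes\d}v\|_{L^{\d/(\d-1)}}$ when $\d\ge2$, which controls the remaining norm by Sobolev) are finite and fixed once and for all --- no dependence on a parameter. Then it tests against a single rescaled $f_r=r^{-\d}f(r^{-1}\cdot)$ and shows directly via dominated convergence that the commutator behaves like $r^{-\s}\log\log(1/r)$, while $\|f_r\|_{\dot H^{(\s-\d)/2}}^2=r^{-\s}\|f\|_{\dot H^{(\s-\d)/2}}^2$; the ratio is $\sim\log\log(1/r)\to\infty$. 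No off-diagonal terms, no $K$-dependent velocity.

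Your approach --- taking $v\sim(\log\tfrac1{|x|})Ax$ truncated at scale $2^{-K-m_0}$ and summing $K$ bumps at nested scales and locations --- is in principle workable, but it creates two nontrivial obligations that you have not discharged. First, the off-diagonal estimate: you need the $\binom{K}{2}$ cross terms to sum to $o(K^2)$, and the Leibniz expansion of $\nab_y^N[(v(x)-v(y))\cdot\nab\g(x-y)]$ contains terms with $|\nab^j v(y)|\sim 2^{l(j-1)}l$ (for $y$ near $p_l$), which must be carefully balanced against the gains from moments and spatial separation; this is doable but is real work, not a remark. Second, your bound $\|\Dm^{(\d-\s)/2}v\|_{L^{2\d/(\d-\s-2)}}\lesssim K^{1/p}$ at the critical exponent is asserted without justification; since $x\log(1/|x|)$ sits exactly at the borderline of this space, the dependence on the truncation scale requires an actual computation. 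In fact the sum over $k$ is unnecessary for your mechanism: a \emph{single} bump at location $p_K=2^{-K}e_1$ and scale $2^{-K-m_0}$ already gives numerator $\sim K$ and denominator $\sim(1+K^{1/p})$, hence ratio $\sim K^{1-1/p}\to\infty$, and this avoids the off-diagonal issue entirely. The paper's $\log\log$ choice goes one step further and makes even the velocity norms independent of the parameter.
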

	
Letting $H$ denote the Hilbert transform $ H f (x)\coloneqq \PV \frac{1}{\pi}\int_{\R} \frac{f(y)}{y-x}dy$, ~\eqref{eq:BMOCE} is equivalent to
	\begin{equation}
		\big\| [v,H]\big\|_{\dot{H}^{-1/2}\rightarrow \dot{H}^{1/2}}\lesssim \|v'\|_{BMO}.
	\end{equation}
Thus, \eqref{eq:BMOCE} may be viewed as a Sobolev-space Coifman--Rochberg--Weiss-type commutator theorem \cite{coifman_factorization_1976}, which in its most elementary form says that $\|\comm{v}{H}\|_{L^p\rightarrow L^p} \lesssim \|v\|_{BMO}$.
	
The proofs of \cref{thm:CEF1}\ref{item:CEF11}, \ref{item:CEF12} are presented  in \Cref{ssec:CEX11,ssec:CEX12}, respectively. The first assertion of the theorem is shown through an explicit counterexample. We construct a velocity field $v$ with $\nabla v\in \BMO$ and a mild logarithmic singularity at the origin and test against a family of rescaled, compactly supported, zero-mean functions $f_r$. The commutator grows like $\log\log(1/r)$ as $r\to0$, while the denominator in \eqref{eq:CEF1} remains uniformly bounded, yielding an arbitrarily large ratio. The proof of the second assertion essentially involves appropriately integrating by parts and grouping terms into components that can be individually bounded via classical Kenig-Ponce-Vega-type commutator estimates \cite{kenig_well-posedness_1993,li_kato-ponce_2019}. 

\subsection{Regularity trade-off}\label{ssec:MRreg}
	
The BMO counterexample indicates that the Lipschitz control $\|\nabla v\|_{L^\infty}$ plays an essential role in \eqref{eq:CE},
so we do not attempt to weaken it further. Our second main quantifies the trade-off between the regularity of the interaction potential and the high-frequency regularity of the transport field required to control the commutator in the natural energy norm. Since this phenomenon is not specific to Riesz potentials, we work with a larger class of potentials for which the interaction energy defines a positive definite form.

	\begin{myass}[Admissible interaction potentials]\label{ass:pot}
		Suppose that $\g:\R^\d\setminus\{0\}\rightarrow \R$ satisfies the following:
		\begin{enumerate}
			\item $\g\in C^\infty(\R^\d\setminus\{0\})$,
			\item $\g$ is radially symmetric,\footnote{Throughout the paper, we use radial symmetry to abuse notation by writing $\g(|x|) = \g(x)$.}
			\item $\g$ is conditionally positive definite (see \cref{fn:cpd_def}) and $\hat{\g}$ nonincreasing.
		\end{enumerate}
	\end{myass}

	Associated to $\g$, we define the energy seminorm
    \begin{align}\label{eq:g_norm}
		\|f\|_{\g}^2 \coloneqq \int_{(\R^\d)^2}\g(x-y)f(x)f(y) = \int_{\R^\d}\hat{\g}|\hat{f}|^2,
    \end{align}
    which we note is nonnegative (possibly $+\infty$) for zero-mean $f$. When dealing with inhomogeneous potentials, it is convenient to introduce the notation $\g_t(x) \coloneqq \g(x/t)$ for $t>0$.

	
	Given a function $\sigma:[0,\infty)\to(0,\infty]$, we denote the Fourier multiplier with symbol $\sigma$ by
	$\sigma(|\nabla|)f\coloneqq  \mathcal{F}^{-1}\!\big(\sigma(|\xi|)\hat f(\xi)\big)$. \cref{thm:CEF2} below shows that if $\sigma$ grows too slowly relative to $\hat\g$ at high frequency, then the commutator cannot be controlled by the energy norm $\|\cdot\|_{\g_t}$ and the velocity field norm $\|\sigma(|\nabla|)v\|_{L^p}$.
	
	\begin{thm}\label{thm:CEF2}
		Suppose that $\g$ satisfies \cref{ass:pot}. Fix $t>0$ and $p\in[1,\infty]$.
		Assume furthermore that
		\begin{equation}\label{eq:decay}
			r^2\hat\g(r)\to 0 \qquad \text{as } r\to\infty
		\end{equation}
		and $\sigma:[0,\infty)\to [0,\infty]$ satisfies
		\begin{align}
			\lim_{r\to\infty} \sqrt{\hat\g(tr)}\|\sigma\|_{L^\infty([r-1,r+1])}=0, \qquad 2\le p\le \infty, \label{eq:sig_cond}\\
            \lim_{r\rightarrow\infty} \sqrt{\hat\g(tr)}\max_{n\le \d+1}\|\p_r^n\sigma\|_{L^\infty([r-1,r+1])}=0, \qquad 1\le p<2 .\label{eq:sig_cond'}
		\end{align}
		Then for every $M\ge 0$, there exist a $C^\infty$ vector field $v$ and zero-mean Schwartz functions
		$f,g$ such that
		\begin{equation}\label{eq:GCEF}
			\frac{\displaystyle
				\bigg|\int_{(\R^\d)^2} (v(x)-v(y))\cdot\nabla\g(x-y)f(x)g(y)\bigg|
			}{\displaystyle
				\Big(\|\nabla v\|_{L^\infty}+\|\sigma(|\nabla|)v\|_{L^p}\Big)
				\|f\|_{\g_t}\|g\|_{\g_t}
			}\ \ge\ M.
		\end{equation}
	\end{thm}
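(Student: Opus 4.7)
The plan is to construct, for $R\to\infty$, an explicit wavepacket triple $(v_R,f_R,g_R)$ whose ratio in \eqref{eq:GCEF} diverges. The mechanism is that the decay condition \eqref{eq:decay} together with the growth conditions \eqref{eq:sig_cond}/\eqref{eq:sig_cond'} on $\sigma$ force the denominator of \eqref{eq:GCEF} to vanish in $R$, while the commutator on the numerator is bounded below by a positive constant extracted from a fixed \emph{low}-frequency scale of $g_R$ rather than the high-frequency scale of $v_R$.

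\textbf{Setup and commutator lower bound.} Fix radial Schwartz bumps $\psi,\phi,\phi'\in\Sc(\R^\d)$ with Fourier transforms supported in $B(0,1/4)$, and set $\eta_0 \coloneqq 2e_2$. For $R\ge 2$, take
\begin{equation*}
v_R(x) \coloneqq \psi(x)\cos(2\pi Rx_1)\,e_2,\quad g_R(x) \coloneqq \phi(x)\cos(2\pi x\cdot\eta_0),\quad f_R(x) \coloneqq \phi'(x)\cos(2\pi x\cdot(Re_1+\eta_0)).
\end{equation*}
Their Fourier supports are bumps of radius $1/4$ centered at $\pm Re_1$, $\pm\eta_0$, $\pm(Re_1+\eta_0)$, so $f_R, g_R$ are automatically zero-mean. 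A standard plane-wave computation shows that the symbol of the commutator $[e^{2\pi i\xi_0\cdot x}\,e_2\cdot,\,\nab\g\ast\,]$ acting on a pure wave at frequency $\eta_0$ equals $2\pi i\,\eta_{0,2}[\hat\g(|\eta_0|)-\hat\g(|\xi_0+\eta_0|)]$; at $\xi_0 = Re_1$, $\eta_0 = 2e_2$, this tends to $4\pi i\hat\g(2)\ne 0$ as $R\to\infty$, since $\hat\g(\sqrt{R^2+4})\to 0$ (a consequence of \eqref{eq:decay}). Choosing $\phi'$ proportional to the convolution $\hat\psi\ast\hat\phi$ so that the Fourier bump of $f_R$ aligns with $\hat v_R\ast\hat g_R$, the wavepacket-to-plane-wave transfer introduces only a multiplicative error $1+o_R(1)$, coming from the continuity of $\hat\g$ near $|\eta_0|=2$ and its decay at infinity, yielding
\begin{equation*}
\Big|\int_{(\R^\d)^2}(v_R(x)-v_R(y))\cdot\nab\g(x-y)f_R(x)g_R(y)\,dx\,dy\Big|\ \geq\ c_0
\end{equation*}
for all sufficiently large $R$, with $c_0>0$ depending only on $\hat\g(2)$ and the fixed bump shapes.

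\textbf{Denominator estimate and conclusion.} Since $\hat f_R$ is concentrated at $|\xi|=R+O(1)$ and $\hat\g$ is nonincreasing, $\|f_R\|_{\g_t}^2 = t^\d\int\hat\g(t|\xi|)|\hat f_R|^2\,d\xi\lesssim t^\d\hat\g(tR)$; similarly $\|g_R\|_{\g_t}$ is an $R$-independent constant and $\|\nab v_R\|_{L^\infty}\lesssim R$. For $\|\sigma(\Dm)v_R\|_{L^p}$, write $\sigma(\Dm)v_R = e^{2\pi iRx_1}(\tau_R\ast\psi)\,e_2+\mathrm{c.c.}$ where $\tau_R \coloneqq \mathcal{F}^{-1}[\sigma(|\eta+Re_1|)\chi(\eta)]$ and $\chi$ is a smooth cutoff equal to $1$ on $\supp\hat\psi$. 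For $p\in[2,\infty]$, Hausdorff--Young gives $\|\sigma(\Dm)v_R\|_{L^p}\lesssim \|\sigma\|_{L^\infty([R-1,R+1])}$. For $p\in[1,2)$, Young's convolution inequality combined with the standard decay estimate $\|\tau_R\|_{L^1}\lesssim\max_{n\le\d+1}\|\p_r^n\sigma\|_{L^\infty([R-1,R+1])}$---obtained by bounding $\|\langle x\rangle^{\d+1}\tau_R\|_{L^\infty}$ via $\d+1$ $\eta$-derivatives of $\sigma(|\eta+Re_1|)$ on $\supp\chi$ and the chain rule---yields the analogous bound with $\max_{n\le\d+1}\|\p_r^n\sigma\|_{L^\infty([R-1,R+1])}$ in place of $\|\sigma\|_{L^\infty([R-1,R+1])}$. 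Assembling, the ratio in \eqref{eq:GCEF} is bounded below by $c\,[(R+\|\sigma\|_\star)\sqrt{\hat\g(tR)}]^{-1}$, and this diverges as $R\to\infty$ because \eqref{eq:decay} gives $R\sqrt{\hat\g(tR)} = t^{-1}\sqrt{(tR)^2\hat\g(tR)}\to 0$ and \eqref{eq:sig_cond}/\eqref{eq:sig_cond'} give $\|\sigma\|_\star\sqrt{\hat\g(tR)}\to 0$.

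\textbf{Main obstacle.} The main technical care is the wavepacket-to-plane-wave transfer and, for $p<2$, the Paley--Wiener-type $L^1$ bound on $\tau_R$. The former requires verifying that the fixed $1/4$-width Fourier bumps shift neither $|\eta_0|=2$ nor $|Re_1+\eta_0|\approx R$ enough to disturb the limiting symbol $4\pi i\hat\g(2)$, which follows from local smoothness of $\hat\g$ on $(0,\infty)$ at fixed $|\eta_0|$ and from $\hat\g\to 0$ at infinity. The latter is a classical Bernstein-type argument showing that $\d+1$ bounded derivatives of the symbol suffice for $L^1$ decay of its inverse Fourier transform, and it is precisely this count that \eqref{eq:sig_cond'} supplies---explaining the bifurcation between the hypotheses for $p<2$ and $p\ge 2$.
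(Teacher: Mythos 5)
Your overall strategy is exactly the paper's: frequency-localized wavepackets with $v$ and $f$ living on a shell at frequency $\approx R$ and $g$ at a fixed low frequency, a resonant/off-resonant splitting of the two convolution terms, shell estimates giving $\|f_R\|_{\g_t}\lesssim\sqrt{\hat\g(tR)}$ and $\|\nabla v_R\|_{L^\infty}\lesssim R$, the Hausdorff--Young bound for $p\ge 2$ and the Bernstein/Young bound (requiring $\d+1$ derivatives of $\sigma$) for $p<2$. However, there is a genuine gap in the construction itself: with $\psi,\phi,\phi'$ radial and all three modulations taken to be cosines, the functions $v_R$, $f_R$, $g_R$ are all \emph{even}, while $\nabla\g$ is odd. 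The change of variables $(x,y)\mapsto(-x,-y)$ then sends the integrand of the numerator to its negative, so
\begin{equation*}
\int_{(\R^\d)^2}(v_R(x)-v_R(y))\cdot\nabla\g(x-y)f_R(x)g_R(y)\,dx\,dy=0
\end{equation*}
identically, and the claimed lower bound by $c_0>0$ is false. The tell is already in your symbol computation: the limiting symbol $4\pi i\hat\g(2)$ is purely imaginary, so when you pass from a single complex plane wave to a real cosine packet the contributions from the $+$ and $-$ frequency bumps cancel exactly rather than reinforce. Your ``$1+o_R(1)$ transfer error'' silently assumes they reinforce.

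The fix is a one-character change: break the parity by taking $v_R(x)=\psi(x)\sin(2\pi Rx_1)e_2$ (or equivalently shift one phase by $\pi/2$). This is precisely what the paper's Fourier-side definition $\hat v(\xi)=\imath e_1\bigl(\hat\phi(\xi+ke_1)-\hat\phi(\xi-ke_1)\bigr)$ accomplishes: it produces a real but \emph{odd} (sine-type) packet, so the product of parities is even and the resonant contribution survives, with the positivity of $\hat\g$ and of the bump profiles then giving a genuine lower bound $\gtrsim\hat\g(2)$ as in the paper's \eqref{eq:CB2}. With that correction, and after carrying out the off-resonant bound $\lesssim R\,\hat\g(R)$ explicitly rather than absorbing it into a multiplicative error, the remainder of your argument (the denominator estimates and the use of \eqref{eq:decay}, \eqref{eq:sig_cond}, \eqref{eq:sig_cond'}) goes through and coincides with the paper's proof. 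Two minor further points: $\widehat{\phi'}=\hat\psi\ast\hat\phi$ has Fourier support of radius $1/2$, not $1/4$ (harmless), and the shell estimate should read $\|f_R\|_{\g_t}^2\lesssim t^{\d}\hat\g(t(R-1))$, which is equivalent for your purposes since $\hat\g$ is nonincreasing.
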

	
	The decay condition \eqref{eq:decay} forces a genuine trade-off: without it, one can be in a regime where a commutator estimate holds with $\|\nabla v\|_{L^\infty}$ alone. Condition \eqref{eq:sig_cond} means that on unit-width
	frequency shells $|\xi|\sim r$, the multiplier $\sigma(r)$ grows strictly slower than
	$\hat\g(tr)^{-1/2}$; and \eqref{eq:sig_cond'} extends this growth condition to derivatives of $\sigma$ up to high enough order. Thus, \cref{thm:CEF2} may be read as saying that any commutator bound measured in $\|\cdot\|_{\g_t}$ forces $v$ to possess at least ``as many derivatives'' as are needed to compensate for $\sqrt{\hat\g}$ at high frequency.

    Note that since $\hat\g$ is nonincreasing, if \eqref{eq:sig_cond} holds for some $t_*>0$, then it holds for all $t\ge t_*$. Furthermore, if $\hat\g(\xi)$ is comparable to a polynomial in $\xi$, then if \eqref{eq:sig_cond} holds for some $t>0$, it holds for any $t>0$. When $1\le p<2$, some regularity of the symbol is needed for $\sigma(\Dm)v\in L^p$, but the condition \eqref{eq:sig_cond'} can almost certainly be further weakened in terms of the number of derivatives (cf. the assumptions on the symbol in the H\"ormander-Mikhlin multiplier theorem). Lastly, considering unit-width frequency shells $[r-1,r+1]$ is not essential: one could consider shells of any fixed radius mutatis mutandis.
    
	Specializing \cref{thm:CEF2} to Riesz potentials $\g$ defined by \eqref{eq:gmod}, one has $\hat\g(\xi) \propto |\xi|^{\s-\d}$. The condition \eqref{eq:decay} is satisfied if $\s<\d-2$, i.e. $\g$ is sub-Coulomb. The condition \eqref{eq:sig_cond} is equivalent to $\sup_{\tau\in [r-1,r+1]}\sigma(\tau) \ll |r|^{\frac{\d-\s}{2}}$ as $r\rightarrow\infty$. In particular, taking $\sigma(\tau)=\tau^{\frac\as2}$, we see that among the scaling-invariant norms $\|\Dm^{\frac\as2} v\|_{L^{p}}$ with $p=\frac{2\d}{\as-2}$, the minimal exponent for which the commutator estimate holds is $\as=\d-\s$. Decreasing $\as$, equivalently increasing $p$, yields a strictly weaker norm for which the commutator estimate fails.
    

    Finally, to emphasize this point of regularity trade-off, we note that \cref{thm:CEF2} implies that $v$ must be $C^\infty$ for the commutator estimate to hold for any interaction potential $\g$ with $\hat\g$ decaying super-polynomially. Indeed, if for any $n\ge 0$, $|\hat\g(\xi)| \lesssim_n |\xi|^{-n}$, then \eqref{eq:GCEF} holds for any $\sigma(\Dm) = \Dm^{n}$. In particular, if $\hat\g$ decays sub-exponentially, then by the Paley-Wiener-Schwartz theorem, $v$ must be at least analytic for a commutator estimate to hold. The regularity requirements are even more stringent when $\hat\g$ has sub-Gaussian decay: if a commutator estimate were to hold, $v$ must be super-analytic. 

    We summarize the above described consequences of \cref{thm:CEF2} with the following corollary.
	
	\begin{cor}\label{cor:CEF2}
    \
    \begin{enumerate}[(i)]
		\item\label{item:CEF21} Suppose that $\g$ is given by ~\eqref{eq:gmod} for $\s<\d-2$. Then for all $2<\mathsf{a}<\d-\s$ and $M>0$, there exist a $C^\infty$ vector field $v$ and zero-mean Schwartz functions $f$ and $g$ such that
		\begin{equation}\label{eq:CEF2}
			\frac{\displaystyle \bigg|\int_{(\R^\d)^2} (v(x)-v(y))\cdot\nabla\g(x-y)f(x)g(y)\bigg|
			}{\displaystyle \Big(\|\nabla v\|_{L^\infty}+\||\nabla|^{\frac\as2} v\|_{L^{\frac{2\d}{\as-2}}}\Big)
				\|f\|_{\dot{H}^{\frac{\s-\d}{2}}}\|g\|_{\dot{H}^{\frac{\s-\d}{2}}}}\geq M.
		\end{equation}
        \item\label{item:CEF22} Suppose that $\g$ is admissible and $\hat\g$ has super-polynomial decay. Then for any $n,M,t>0$, there exist a $C^\infty$ vector field $v$ and zero-mean Schwartz functions $f$ and $g$ such that
        \begin{align}
            \frac{\displaystyle \bigg|\int_{(\R^\d)^2} (v(x)-v(y))\cdot\nabla\g(x-y)f(x)g(y)\bigg|
			}{\displaystyle \|\jp{\nab}^n v\|_{L^1}
				\|f\|_{\g_t}\|g\|_{\g_t}}\geq M.
        \end{align}

        \item\label{item:CEF23} Suppose that $\g$ is admissible and for some $c,m>0$, $|\hat\g(\xi)| \lesssim e^{-c|\xi|^m}$. Then for any $b<\frac{ct^m}{2}$ and $M>0$, there exist a $C^\infty$ vector field $v$ and zero-mean Schwartz functions $f,g$ such that
        \begin{align}
            \frac{\displaystyle \bigg|\int_{(\R^\d)^2} (v(x)-v(y))\cdot\nabla\g(x-y)f(x)g(y)y\bigg|
			}{\|e^{b\Dm^m}v\|_{L^1}\|f\|_{\g_t} \|g\|_{\g_t}}\geq M.
        \end{align}
	\end{enumerate}
    \end{cor}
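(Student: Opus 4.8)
The plan is to read off \cref{cor:CEF2} from \cref{thm:CEF2} by choosing, in each of the three cases, the symbol $\sigma$, the exponent $p\in[1,\infty]$, and the scale $t>0$ so that $\sigma(\Dm)$ matches the operator in the denominator, and then verifying the hypotheses \eqref{eq:decay} and \eqref{eq:sig_cond}/\eqref{eq:sig_cond'}. Two preliminary observations make the reduction clean. First, the numerator of \eqref{eq:GCEF} is exactly the commutator in each part of \cref{cor:CEF2}, and $\|\cdot\|_{\g_t}$ is the energy seminorm \eqref{eq:g_norm} of $\g_t$; when $\g$ is Riesz and $t=1$ this coincides, up to a fixed constant, with $\|\cdot\|_{\dot H^{(\s-\d)/2}}$. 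Second, the operators $\Dm^{\as/2}$, $\jp{\nab}^n$, $e^{b\Dm^m}$ are each of the form $\sigma(\Dm)$ for a suitable $\sigma$ (up to a constant coming from the Fourier normalization), and in parts \ref{item:CEF22}--\ref{item:CEF23} the denominator of \cref{cor:CEF2} is obtained from that of \eqref{eq:GCEF} by discarding the $\|\nab v\|_{L^\infty}$ term, which only enlarges the ratio, finiteness of the retained norm being part of the conclusion of \cref{thm:CEF2}. Hence, after enlarging $M$ by a fixed constant, it suffices in each case to apply \cref{thm:CEF2}.

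For part \ref{item:CEF21}, take $\g$ as in \eqref{eq:gmod} with $\s<\d-2$, fix $t=1$, and set $\sigma(r)=r^{\as/2}$, $p=\tfrac{2\d}{\as-2}$. Since $\hat\g(\xi)\propto|\xi|^{\s-\d}$, condition \eqref{eq:decay} reads $r^{\,2+\s-\d}\to0$, valid because $\s<\d-2$. As $2<\as<\d-\s<\d+2$ (using $\s>-2$ in the last inequality), one has $\as-2<\d$, hence $p>2$, so only \eqref{eq:sig_cond} is relevant, and it reduces to $r^{(\as+\s-\d)/2}\to0$, i.e. $\as<\d-\s$ — exactly the hypothesis. \cref{thm:CEF2} then furnishes $v,f,g$ realizing \eqref{eq:CEF2}.

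For part \ref{item:CEF22}, let $\g$ be admissible with $\hat\g$ decaying faster than every polynomial, fix $t>0$, take $p=1$, and let $\sigma$ be the symbol of $\jp{\nab}^n$. Then \eqref{eq:decay} is immediate, and in \eqref{eq:sig_cond'} the first $\d+1$ radial derivatives of $\sigma$ are $O(r^{n})$ on $[r-1,r+1]$ for $r$ large, while $\sqrt{\hat\g(tr)}=O(r^{-n-1})$ by super-polynomial decay, so the product tends to $0$. Applying \cref{thm:CEF2} and discarding $\|\nab v\|_{L^\infty}$ gives the claim. Part \ref{item:CEF23} is the same with $\hat\g(\xi)\lesssim e^{-c|\xi|^m}$, $p=1$, and $\sigma$ the symbol of $e^{b\Dm^m}$: here $\sqrt{\hat\g(tr)}$ decays like $e^{-c'r^m}$ with $c'$ proportional to $ct^m$, whereas, using $(r\pm1)^m=r^m+o(r^m)$, the first $\d+1$ radial derivatives of $\sigma$ on $[r-1,r+1]$ grow like $e^{b'r^m}$ with $b'$ proportional to $b$, up to polynomial factors; the condition $b<\tfrac{ct^m}{2}$ is precisely what makes the $\sigma$-growth exponent strictly smaller than the $\hat\g$-decay exponent, so the product in \eqref{eq:sig_cond'} decays, while \eqref{eq:decay} is clear.

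There is no serious obstacle here: \cref{cor:CEF2} is a direct specialization of \cref{thm:CEF2}, and once the dictionary between the stated velocity norms and the multipliers $\sigma(\Dm)$ is fixed, each part is a one-line application. The only mildly delicate step — and the one I would actually write out — is the asymptotic bookkeeping in parts \ref{item:CEF22}--\ref{item:CEF23}, namely confirming that the finitely many radial derivatives (up to order $\d+1$) of the rapidly growing symbols $\jp{2\pi r}^n$ and $e^{b(2\pi r)^m}$ still grow slowly enough, uniformly on unit-width frequency shells, to be overwhelmed by $\sqrt{\hat\g}$ at high frequency; this is exactly the quantitative trade-off that \cref{thm:CEF2} is built to exploit.
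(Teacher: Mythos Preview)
Your proposal is correct and follows exactly the same approach as the paper: the corollary is obtained by specializing \cref{thm:CEF2} with the appropriate choice of $\sigma$, $p$, and $t$ in each case and verifying hypotheses \eqref{eq:decay} and \eqref{eq:sig_cond}/\eqref{eq:sig_cond'}. The paper carries this out in the discussion preceding the corollary in \cref{ssec:MRreg} and explicitly notes in \cref{sec:CEX} that no further proof is given; your writeup is, if anything, more detailed than the paper's own.
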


    	\begin{remark}\label{rem:Gauss}
        Suppose $\g$ is such that there exists $t>0$ so that $|x||\nab\g(x)| \lesssim \g(x/t)$. Such a condition is obviously satisfied if $\g$ is polynomial, exponential, or Gaussian. Then using $|v(x)-v(y)|\le \|\nabla v\|_{L^\infty}|x-y|$,  we obtain for $f,g\ge 0$ the bound
		\begin{align}
		\notag\bigg|\int_{(\R^\d)^2}(v(x)-v(y))\cdot\nabla\g(x-y)f(x)g(y)\bigg|
			&\lesssim \|\nabla v\|_{L^\infty}\int_{(\R^\d)^2}\g\Big(\frac{x-y}{t}\Big)f(x)g(y)
			\\ & \lesssim  \|\nabla v\|_{L^\infty} \|f\|_{\g_t} \|g\|_{\g_t}.
		\end{align}
		This observation shows that under the assumption $f,g$ have a definite sign, only Lipschitz regularity is needed for a commutator estimate to hold. In light of \cref{cor:CEF2}, a  general commutator estimate for $f,g$ without a definite sign necessarily requires much higher regularity on $v$.
	\end{remark}

    The proof of \cref{thm:CEF2} is given in \cref{ssec:CEX2}. We construct $v,f,g$ so that their Fourier supports sit on a few carefully chosen (and well separated) frequency shells. This forces one contribution to the commutator to be ``off-resonant'' and therefore small while the other contribution is ``resonant'' and stays bounded below. At the same time, the same frequency localization makes the velocity norm essentially depend on $\sup_{r\in[k-1,k+1]}\sigma(r)$ when $2\le p\le \infty$, while $\|f\|_{\g_t}$ scales like $\sqrt{\hat\g(tk)}$; after normalizing by $\|f\|_{\g_t}$ this produces a factor $1/\sqrt{\hat\g(tk)}$ in the final ratio. Letting $k\to\infty$ then produces the desired ratio growth.

\subsection{Defective commutator estimates and mean-field convergence}\label{ssec:MRmf}
We now come to our last set of results on mean-field convergence when the solution $\mu^t$ of the mean-field equation  \eqref{eq:MFlim}, for $\g$ as in \eqref{eq:gmod}, belongs to the critical space $\dot{W}^{\frac{\d}{p}+\s+2-\d,p}$ for $1<p<\infty$. 

Although the commutator estimate fails when $v$ is not Lipschitz (except for $(\d,\s)=(1,0)$) by \cref{thm:CEF1}\ref{item:CEF11}, as is the case for $v=\M\nab\g\ast\mu^t$  when $\mu^t\in \dot{W}^{\frac{\d}{p}+\s+2-\d,p}$, a defective commutator estimate still holds. This is the content of \cref{thm:FI} below. We exclude the case $\s=0$ from the theorem because doing so simplifies the presentation and this case is already covered by the argument of \cite{rosenzweig_mean-field_2022-1}.\footnote{Strictly speaking, \cite{rosenzweig_mean-field_2022-1} only considers the case $(\d,\s) = (2,0)$, but an examination of the argument shows that it applies to all log cases, replacing $L^\infty$ by the appropriate critical space.}

{
\begin{thm}\label{thm:FI}
Assume that $-2<\s<\d$ and $\s\ne 0$. Let $v\in \dot{W}^{\frac{\d}{p}+1,p}$ for $1<p<\infty$ and $\mu\in L^1$ with $\int_{\R^\d}d\mu =1$ and $\int_{(\R^\d)^2}|\g|(x-y)d|\mu|^{\otimes2}<\infty$. Further, assume the following:
    \begin{enumerate}[(a)]
        \item if $-2<\s\le-1$, then $\int_{\R^\d}|x|^{|\s|-1}d|\mu| < \infty$;
        \item if $-1<\s<0$, then $\int_{\R^\d}|x|^{r}d|\mu| < \infty$ for some $r<\frac{|\s|}{2}$;\footnote{Obviously, by interpolation, if $\int_{\R^\d}|x|^{r'}d|\mu|<\infty$ for some $r'\ge \frac{r}{2}$, then $\int_{\R^\d}|x|^{r}d|\mu|<\infty$ for any $0\le r\le r'$, in particular for some $r<\frac{|\s|}{2}$.}
        \item if $-1\le \s<\d-1$, then $\mu\in L^q$ for some $\frac{\d}{\d-\s-1}<q\leq \infty$ ($q=1$ is allowed if $\s=-1$);
        \item if $\s\ge \d-1$, then $\mu\in \dot{\mathcal{C}}^{\theta}$ for some $\theta>\s+1-\d$.\footnote{If $\mu\in L^1 \cap \dot{\mathcal{C}}^\theta$ for some $\theta>0$, then necessarily $\mu\in L^p$ for any $1\le p\le \infty$ because of the interpolation estimate $\|f\|_{L^p} \le \|f\|_{L^1}^{1-\frac{\d(p-1)}{p(\d+\theta)}}\|f\|_{\dot{\mathcal{C}}^\theta}^{\frac{\d(p-1)}{p(\d+\theta)}}$.}
    \end{enumerate}
    
    Then for any pairwise distinct configuration $\XN\in (\R^\d)^N$ and $\ep>0$,  the following holds:
    \\
    If $\max(\d-2,0)\le \s<\d$, then
    \begin{multline}\label{eq:FIdcsupC}
        |\As_1[\XN,\mu,v]| \le C_p\|v\|_{\dot{W}^{\frac{\d}{p}+1,p}}(1+|\log\ep|)^{1-\frac1p}\Big(\Fr_N(\XN,\mu) + C_q\|\mu\|_{L^q}\la^{\frac{\d(q-1)}{q}-\s} \Big)\\
        + {CN^{\frac{2(\s+1)}{\s}-1}\|v\|_{\dot{\mathcal{C}}^1}\ep}\Big(\Fr_N(\XN,\mu)  + C_p\|\mu\|_{L^p}\la^{\frac{\d(p-1)}{p}-\s}\Big)^{\frac{\s+1}{\s}}\\
        +\ep(1+\|\mu\|_{L^1})\|v\|_{\dot{\mathcal{C}}^1} \Big(  C_q\|\mu\|_{L^1}^{1-\frac{(\s+1)q}{\d(q-1)}} \|\mu\|_{L^q}^{\frac{(\s+1)q}{\d(q-1)}} +  C_\theta\|\mu\|_{L^1}^{1-\frac{\s-\d+1}{\theta}} \big(\|\mu\|_{\dot{\mathcal{C}}^\theta}^{\frac{\s-\d+1}{\theta}} + \ep^{\d-\s-1}\|\mu\|_{L^\infty}^{\frac{\s-\d+1}{\theta}}\big)\indic_{\s\ge \d-1}\Big).
    \end{multline}
    If $0<\s<\d-2$, then
    \begin{multline}\label{eq:FIdcsubC1}
        |\As_1[\XN,\mu,v]| \\
        \le \Big(C_p\|v\|_{\dot{W}^{\frac{\d}{p}+1,p}}(1+|\log\ep|)^{1-\frac1p}  + C_a\|\Dm^{\frac{\as}{2}}v\|_{L^{\frac{2\d}{\as-2}}}\indic_{\substack{\as>2}}\Big) \Big(\Fr_N(\XN,\mu) + C_q\|\mu\|_{L^q}\la^{\frac{\d(q-1)}{q}-\s} \Big) \\
        + {CN^{\frac{2(\s+1)}{\s}-1}\|v\|_{\dot{\mathcal{C}}^1}\ep}\Big(\Fr_N(\XN,\mu)  + C_p\|\mu\|_{L^p}\la^{\frac{\d(p-1)}{p}-\s}\Big)^{\frac{\s+1}{\s}} 
        +\ep(1+\|\mu\|_{L^1})\|v\|_{\dot{\mathcal{C}}^1}  C_q\|\mu\|_{L^1}^{1-\frac{(\s+1)q}{\d(q-1)}} \|\mu\|_{L^q}^{\frac{(\s+1)q}{\d(q-1)}}
    \end{multline}
    or
    \begin{multline}\label{eq:FIdcsubC2}
        |\As_1[\XN,\mu,v]| \le  \Big(C_p\|v\|_{\dot{W}^{\frac{\d}{p}+1,p}}(1+|\log\ep|)^{1-\frac1p}  + C\|\Dm^{\frac{\d-\s}{2}}v\|_{L^{\frac{2\d}{\d-\s-2}}}\Big)\Big(\Fr_N(\XN,\mu) + C_{{q}}\|\mu\|_{L^{{q}}}\ka^{\frac{\d({q}-1)}{{q}}-\s}\Big) \\
        + {CN^{\frac{2(\s+1)}{\s}-1}\|v\|_{\dot{\mathcal{C}}^1}\ep}\Big(\Fr_N(\XN,\mu)  + C_p\|\mu\|_{L^p}\la^{\frac{\d(p-1)}{p}-\s}\Big)^{\frac{\s+1}{\s}} +\ep(1+\|\mu\|_{L^1})\|v\|_{\dot{\mathcal{C}}^1}  C_q\|\mu\|_{L^1}^{1-\frac{(\s+1)q}{\d(q-1)}} \|\mu\|_{L^q}^{\frac{(\s+1)q}{\d(q-1)}}.
    \end{multline}
    If $-2<\s<0$, then
    \begin{multline}\label{eq:FIdcnonsing}
       |\As_1[\XN,\mu,v]|\le  \Big(C_p\|v\|_{\dot{W}^{\frac{\d}{p}+1,p}}(1+|\log\ep|)^{1-\frac1p}  + C\|\Dm^{\frac{\d-\s}{2}}v\|_{L^{\frac{2\d}{\d-\s-2}}}\Big)\Fr_N(\XN,\mu) \\
        +\Bigg(C_{\vartheta,\vartheta'}\|v\|_{\dot{\mathcal{C}}^1}\ep^{\vartheta'}\Big(\int_{\R^\d}|x|^{|\s|-\vartheta}d\mu + (1+\|\mu\|_{L^1})\Fr_N^{\frac{|\s|-\vartheta}{|\s|}}(\XN,\mu)\Big) + C\|v\|_{\dot{\mathcal{C}}^1}\ep\Bigg)\indic_{-1< \s<0}\\
        +  {C\|v\|_{\dot{\mathcal{C}}^1}\ep}\Bigg(\int_{\R^\d}|x|^{|\s|-1}d\mu +  (1+\|\mu\|_{L^1})\Fr_N^{\frac{|\s|-1}{|\s|}}(\XN,\mu)\Bigg) \indic_{-2<\s\le-1}\\
        +\ep(1+\|\mu\|_{L^1})\|v\|_{\dot{\mathcal{C}}^1}\Big( \int_{\R^\d}|x|^{|\s|-1}d\mu + \Fr_N^{\frac{|\s|-1}{|\s|}}(\XN,\mu)\indic_{-2<\s\le -1}  + C_q\|\mu\|_{L^1}^{1-\frac{(\s+1)q}{\d(q-1)}} \|\mu\|_{L^q}^{\frac{(\s+1)q}{\d(q-1)}}\indic_{-1\le \s<0}\Big).
    \end{multline}
Above, {$\vartheta \in [|\s|-r,|\s|)$}, $0<\vartheta'<\vartheta$, $C=C(\d,\s)>0$ and $C_\vartheta, C_p,C_q,C_a,C_\theta>0$ additionally depend on $\vartheta, p,q,a,\theta$, respectively.
\end{thm}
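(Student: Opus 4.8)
The plan is to exploit the linearity of $\As_1[\XN,\mu,\cdot]$ in the transport field and decompose $v=v^{<}+v^{>}$, where $v^{<}\coloneqq\chi(\ep\Dm)v$ is the low-frequency projection onto $\{|\xi|\lesssim\ep^{-1}\}$ (with $\chi$ a smooth cutoff equal to $1$ near the origin) and $v^{>}\coloneqq v-v^{<}$. The point of the decomposition is that $v^{<}$ is a genuine Lipschitz field, with a norm diverging only like a power of $\log(1/\ep)$, so the sharp renormalized commutator estimates of \cref{thm:FI'} apply to it directly and produce the first line of each of \eqref{eq:FIdcsupC}--\eqref{eq:FIdcnonsing}; meanwhile $v^{>}$ is $O(\ep)$-small in $L^\infty$, and the remaining, $\ep$-weighted error terms come from a crude ``zeroth-order'' commutator bound applied to it which requires no Lipschitz control at all.

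For the low-frequency piece, the key is the bound $\|\nab v^{<}\|_{L^\infty}\le C_p\|v\|_{\dot W^{\frac\d p+1,p}}(1+|\log\ep|)^{1-\frac1p}$. This is exactly the frequency-localized form of the Brezis--Wainger--Hansson inequality \eqref{eq:BW}: since $\nab v^{<}$ has Fourier support in a ball of radius $O(\ep^{-1})$, Bernstein's inequality gives $\|\nab v^{<}\|_{\dot W^{a,p}}\lesssim\ep^{-(a-\d/p)}\|\nab v^{<}\|_{\dot W^{\d/p,p}}$ for any $a>\d/p$, and inserting this into \eqref{eq:BW} (applied to $\nab v^{<}$, with $q=p$) turns the ratio of norms into $\ep^{-(a-\d/p)}$, whose logarithm is $O(|\log\ep|)$; finally $\chi(\ep\Dm)$ is a Mikhlin multiplier, so $\|\nab v^{<}\|_{\dot W^{\d/p,p}}\le C_p\|v\|_{\dot W^{\frac\d p+1,p}}$. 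The same $L^q$-boundedness of $\chi(\ep\Dm)$ for $1<q<\infty$ gives $\|\Dm^{\frac\as2}v^{<}\|_{L^{\frac{2\d}{\as-2}}}\le C\|\Dm^{\frac\as2}v\|_{L^{\frac{2\d}{\as-2}}}$ and $\|\Dm^{\frac{\d-\s}{2}}v^{<}\|_{L^{\frac{2\d}{\d-\s-2}}}\le C\|\Dm^{\frac{\d-\s}{2}}v\|_{L^{\frac{2\d}{\d-\s-2}}}$, so applying the relevant case of \cref{thm:FI'} to $v^{<}$ — \eqref{eq:FIsupC} when $\max(\d-2,0)\le\s<\d$, \eqref{eq:FIsubC1} or \eqref{eq:FIsubC2} when $0<\s<\d-2$, \eqref{eq:FInonsing} when $-2<\s<0$ — gives precisely the first lines of the claimed inequalities with $\|\nab v^{<}\|_{L^\infty}$ replaced by its logarithmic bound.

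For the high-frequency piece, one first notes that the Littlewood--Paley characterization of $\dot{\mathcal C}^1=\dot B^1_{\infty,\infty}$ gives $\|v^{>}\|_{L^\infty}\le\sum_{2^j\gtrsim\ep^{-1}}\|\Delta_j v\|_{L^\infty}\lesssim\ep\|v\|_{\dot{\mathcal C}^1}$, which accounts for the $\ep\|v\|_{\dot{\mathcal C}^1}$ prefactors. It then remains to prove, for an arbitrary bounded vector field $w$ and pairwise distinct $\XN$, a \emph{zeroth-order} commutator estimate of the schematic form
\begin{equation*}
    |\As_1[\XN,\mu,w]|\le C\|w\|_{L^\infty}\Big(N^{\frac{2(\s+1)}{\s}-1}\big(\Fr_N(\XN,\mu)+\text{(add.\ err.)}\big)^{\frac{\s+1}{\s}}+(1+\|\mu\|_{L^1})\big(\text{norms/moments of }\mu\big)\Big)
\end{equation*}
in the singular range (and with moment terms $\int|x|^{|\s|-\vartheta}d\mu$, $\Fr_N^{\frac{|\s|-\vartheta}{|\s|}}$ in the nonsingular range, since there $\nab\g$ grows at infinity). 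Using the antisymmetry of $\nab\g$, one rewrites $\As_1[\XN,\mu,w]=2\int_{(\R^\d)^2\setminus\triangle}w(x)\cdot\nab\g(x-y)\,d(\mu_N-\mu)^{\otimes2}(x,y)$ and then reruns the smearing/truncation machinery underlying \cref{thm:FI'}: replace each $\delta_{x_i}$ by a smearing at a scale comparable to the local interparticle distance, bound the smeared double integral by $\|w\|_{L^\infty}$ times the total mass of the regularized force $\nab\g\ast(\tilde\mu_N-\mu)$ against $\tilde\mu_N-\mu$, and absorb the smearing corrections and the force into $\Fr_N$ plus the additive errors. The one structural change relative to the sharp first-order argument is that, with only $\|w\|_{L^\infty}$ and not $\|\nab w\|_{L^\infty}$, one can no longer gain the extra factor of the truncation radius; re-optimizing the smearing scales then degrades the power of the energy to $\frac{\s+1}{\s}$, produces the prefactor $N^{\frac{2(\s+1)}{\s}-1}$, and — in the sub-Coulomb regime — makes the length scale $\ka$ of \eqref{eq:FIdcsubC2} appear. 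The hypotheses (a)--(d) are exactly what makes the $\mu$-part of the regularized force finite: $\int|x|^{|\s|-1}d\mu$ or $\int|x|^{r}d\mu$ controls the growth of $\nab\g$ at infinity for $\s\le0$, $\mu\in L^q$ with $q>\frac\d{\d-\s-1}$ puts $\nab\g\ast\mu$ in $L^\infty$ for $-1\le\s<\d-1$, and $\mu\in\dot{\mathcal C}^\theta$ with $\theta>\s+1-\d$ supplies the cancellation needed in the borderline range $\s\ge\d-1$. Adding the two contributions and substituting $\|v^{>}\|_{L^\infty}\lesssim\ep\|v\|_{\dot{\mathcal C}^1}$ into the zeroth-order bound yields the four stated inequalities; the case $\s=0$ is omitted because the exponents $\frac{\s+1}{\s}$ and $\frac{2(\s+1)}{\s}-1$ degenerate there and it is already handled in \cite{rosenzweig_mean-field_2022-1}. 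I expect the main obstacle to be exactly this zeroth-order estimate for the high-frequency piece: carrying out the full renormalization/stress-energy argument while tracking the loss of precisely one derivative on the transport, getting the resulting powers of $N$ and of $\Fr_N$ sharp, and threading the various moment and integrability hypotheses through the super-Coulomb and nonsingular borderline cases, is where essentially all of the work lies.
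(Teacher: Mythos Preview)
Your overall strategy --- decompose $v$ into a Lipschitz-controllable piece plus an $O(\ep)$-small remainder, apply \cref{thm:FI'} to the first piece with the Brezis--Wainger--Hansson $(1+|\log\ep|)^{1-1/p}$ bound on the Lipschitz norm, and estimate the remainder separately --- is exactly the paper's approach, with the cosmetic difference that the paper uses physical-space mollification $v_\ep=\chi_\ep\ast v$ rather than a Fourier cutoff.

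Where you diverge, and overcomplicate, is the error piece. You propose to ``rerun the smearing/truncation machinery underlying \cref{thm:FI'}'' to produce a zeroth-order renormalized commutator estimate, and you correctly flag this as the main obstacle. The paper avoids this entirely. Its \cref{lem:molvepLL} bounds $\As_1[\XN,\mu,v-v_\ep]$ by a completely elementary argument: split into particle--particle, particle--measure, and measure--measure contributions $\Te_1,\Te_2,\Te_3$. For $\Te_1$ in the range $0<\s<\d$, use the crude pointwise bound $|k_{v-v_\ep}(x,y)|\lesssim \ep\|v\|_{\dot{\mathcal C}^1}|x-y|^{-\s-1}$, then bound $\sum_{j\ne i}|x_i-x_j|^{-\s-1}\le CN\rs_i^{-\s-1}$ via the nearest-neighbor distance $\rs_i$, pass from $\sum_i\rs_i^{-\s-1}$ to $(\sum_i\rs_i^{-\s})^{(\s+1)/\s}$ by the $\ell^\s\hookrightarrow\ell^{\s+1}$ embedding, and finally invoke \cref{cor:MEcount} (which controls $N^{-2}\sum_i\rs_i^{-\s}$ by $\Fr_N+\text{add.\ err.}$). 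This short computation already produces the $N^{\frac{2(\s+1)}{\s}-1}(\Fr_N+\cdots)^{\frac{\s+1}{\s}}$ term, with no smearing at all. For $\Te_2,\Te_3$ one just combines $\|v-v_\ep\|_{L^\infty}\lesssim\ep\|v\|_{\dot{\mathcal C}^1}$ with the interpolation bound $\|\nab\g\ast\mu\|_{L^\infty}\lesssim\|\mu\|_{L^1}^{1-\alpha}\|\mu\|_{L^q}^\alpha$ (or its $\dot{\mathcal C}^\theta$ analogue when $\s\ge\d-1$), which is where hypotheses (c),(d) enter; in the nonsingular range, moments replace the nearest-neighbor control and \cref{prop:moms} replaces \cref{cor:MEcount}. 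So the ``main obstacle'' you anticipate dissolves once you see that no renormalization is needed for the remainder: the pointwise $O(\ep)$ smallness of the kernel plus the small-scale control already encoded in the modulated energy (\cref{cor:MEcount}) suffices.
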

}

\begin{remark}\label{rem:dcompinfty}
The endpoint $p=\infty$ also holds if we replace $\dot{W}^{1,\infty}$ with the Zygmund space $\dot{\mathcal{C}}^1$. The reader should remember that, per our notation (recall \cref{ssec:introN}), the limit as $p\rightarrow \infty$ of $\dot{W}^{\frac{d}{p}+1,p}$ is not the homogeneous Lipschitz space, but rather the ill-behaved space $\{f\in\Sc' : \jp{\nab}f \in L^\infty\}$, which does not coincide with the Lipschitz space, and is usually not considered in the harmonic analysis literature. See \cref{ssec:RQmf} for further comments on this point.
\end{remark}

With this defective commutator estimate in hand, one can apply it to $v=\mu^t$ to obtain a Gr\"onwall-type bound for the modulated energy (a more precise estimate is given during the proof of \cref{thm:MF} in \cref{ssec:BWmp}). This bound implies mean-field convergence provided that the initial modulated energy vanishes sufficiently fast as $N\rightarrow\infty$, an assumption which holds a.s. for most interesting choices of randomization of the initial particle positions (see \cref{rem:randID} below).  

{
\begin{thm}\label{thm:MF}
Let $\d\ge 1$ and $-2<\s<\d$ and $\s \ne 0$. Suppose that for $T>0$, $\int_0^T(\|\mathsf{V}^t\|_{\dot{W}^{\frac{\d}p+1,p}} + \|\Dm^{\frac{\d-\s}{2}}\mathsf{V}^t\|_{L^{\frac{2\d}{\d-\s-2}}}\indic_{\s<\d-2})dt < \infty$. Let $\mu$ be a solution to the mean-field equation \eqref{eq:MFlim} in $L^\infty([0,T], \mathcal{P}(\R^\d)\cap \dot{W}^{\frac{\d}{p}+\s+2-\d,p}(\R^\d))$ for $1<p\le \frac{2\d}{\d-\s-2}$. Suppose further that $\int_{\R^\d}|x|^{\frac{|\s|}{2}}d\mu^t<\infty$ for every $t\in [0,T]$.\footnote{This condition ensures that the modulated energy/MMD is finite (see \cite[Example 4]{modeste_characterization_2024}) and that all our moment assumptions in the statement of the theorem are satisfied. Through a Gr\"onwall argument, the reader may check that if $\mu^0$ satisfies this moment condition, then $\mu^t$ does uniformly in $[0,T].$}

For $q>\frac{\d}{\d-\s-1}$, $\vartheta \in (\frac{|\s|}{2},|\s|)$, and $0<\vartheta'<\vartheta$, let
\begin{align}
    \mathscr{E}^t \coloneqq \Fr_N(\XN^t,\mu^t)  + \sup_{t\le \tau\le T}\zeta^\tau,
\end{align}
where
\begin{align}
\zeta^\tau \coloneqq \ep\begin{cases}\Big(C_q\|\mu^\tau\|_{L^q}\la_\tau^{\frac{\d(q-1)}{q}-\s} + C_q\|\mu^\tau\|_{L^q}^{\frac{(\s+1)q}{\d(q-1)}}\indic_{\s<\d-1}\\
    + C(\|\mu^\tau\|_{\dot{\mathcal{C}}^{\s+2-\d}}^{\frac{\s+1-\d}{\s+2-\d}} + \ep^{\d-\s-1}\|\mu^\tau\|_{L^\infty}^{\frac{\s-\d+1}{\s+2-\d}}) \indic_{\s\ge\d-1}\Big), & {\max(\d-2,0)\le \s<\d} \\  C_q\Big(\|\mu^\tau\|_{L^{{q}}}\ka_\tau^{\frac{\d({q}-1)}{{q}}-\s} + \|\mu^\tau\|_{L^q}^{\frac{(\s+1)q}{\d(q-1)}}\Big) , & {0<\s<\d-2} \\ C_{\vartheta,\vartheta'}\ep^{\vartheta'-1}\int_{\R^\d}|x|^{|\s|-\vartheta}d\mu^\tau + C(1+C_q\|\mu^\tau\|_{L^q}^{\frac{(\s+1)q}{\d(q-1)}}), & {-1<\s<0} \\ C\int_{\R^\d}|x|^{|\s|-1}d\mu^\tau, & {-2<\s\le -1}, \end{cases}
\end{align}
and for $\al,\delta>0$,\footnote{When $0<\s<\d$, the definition of $\ep$ is given implicitly. See the proof of \cref{thm:MF} for why the implicit equation has a solution.}
\begin{align}\label{eq:MFepdef}
    \ep \coloneqq
    \begin{cases}
        \delta N^{-1-\frac{2(\s+1)}{\s}-\al}(\mathscr{E}^0)^{-\frac{1}{\s}}, & {0 < \s<\d}\\
        N^{-\al}, & {-2<\s<0}.
    \end{cases}
\end{align}

There exists a $\delta>0$ depending only on $\d,\s,p,\al,\int_0^T \|\mu^t\|_{\dot{W}^{\frac{\d}{p}+\s+2-\d,p}}dt$, such that for any solution $\XN$ of the microscopic system \eqref{eq:MFode}, the following holds for $t\in [0,T]$: if $0< \s<\d$,
\begin{align}
     \mathscr{E}^t \le C\mathscr{E}^0\exp\Big(\int_0^{t} C_p\|u^{t'}\|_{\dot{W}^{\frac{\d}{p}+1,p}}\Big(1+|\log(\delta N^{1-\frac{2(\s+1)}{\s}-\al}(\mathscr{E}^0)^{-\frac1\s})|\Big)^{1-\frac1p}dt'\Big),
\end{align}
if $-1<\s<0$, then
\begin{align}
    (\mathscr{E}^t)^{\frac{\vartheta}{|\s|}} \le \exp\Big({\frac{C_p\vartheta}{|\s|}\int_0^t \|u^{\tau}\|_{\dot{W}^{\frac{\d}{p}+1,p}}(1+|\log N^{-\al}|)^{1-\frac1p} d\tau}\Big)\bigg((\mathscr{E}^0)^{\frac{\vartheta}{|\s|}} + \frac{\vartheta}{|\s|}\int_0^t C_{\vartheta,\vartheta'}\|u^{t'}\|_{\dot{\mathcal{C}}^1}\ep^{\vartheta}dt'\bigg),
\end{align}
and if $-2<\s\le -1$, then
\begin{align}
    (\mathscr{E}^t)^{\frac{1}{|\s|}} \le \exp\Big({\frac{C_p}{|\s|}\int_0^t \|u^\tau\|_{\dot{W}^{\frac{\d}{p}+1,p}}(1+|\log N^{-\al}|)^{1-\frac1p} d\tau}\Big)\bigg((\mathscr{E}^0)^{\frac{1}{|\s|}} + \frac{1}{|\s|}\int_0^t C\|u^{t'}\|_{\dot{\mathcal{C}}^1}\ep dt'\bigg).
\end{align}
Above, $C>0$ depends only on $\d,\s$, and $C_{\vartheta,\vartheta'},C_q,C_p$ additionally depend on $(\vartheta,\vartheta'),q,p$, respectively.

In particular, if $\Fr_N(\XN^0,\mu) = O(N^{-m})$ as $N\rightarrow\infty$, for some $m>0$, then  $\mu_N^t \rightarrow \mu^t$ as $N\rightarrow\infty$ in $H^{-r}(\R^\d)$ uniformly on $[0,T]$ for any $r>\frac\d2$.
\end{thm}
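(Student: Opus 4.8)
The plan is to feed the defective commutator estimate of \cref{thm:FI} into a differential inequality for the modulated energy along the coupled particle/mean-field flows, and to close that inequality by an Osgood--Gr\"onwall argument with a carefully calibrated parameter $\ep$. The starting point is the now-standard computation of the time derivative of the modulated energy: writing $u^t\coloneqq \M\nab\g\ast\mu^t-\mathsf{V}^t$ for the mean-field velocity, so that $\mu^t$ solves the continuity equation with velocity $u^t$ and $\mu_N^t$ solves it with the empirical analogue of $u^t$, a variational computation in the spirit of \eqref{15} combined with the evolution \eqref{eq:MFlim} gives
\[
\frac{d}{dt}\Fr_N(\XN^t,\mu^t) = \tfrac12\As_1[\XN^t,\mu^t,u^t] + R^t,
\]
where the remainder $R^t$ vanishes when $\M$ is antisymmetric and is a nonpositive dissipation term when $\M=-\I$; in the singular case the identity is justified through the charge-smearing/renormalization procedure (see \cite{serfaty_mean_2020,nguyen_mean-field_2022,serfaty_lectures_2024}). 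Discarding $R^t$ leaves $\frac{d}{dt}\Fr_N(\XN^t,\mu^t)\le\tfrac12|\As_1[\XN^t,\mu^t,u^t]|$, which is precisely the quantity controlled by \cref{thm:FI}.

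To invoke \cref{thm:FI} with $v=u^t$, one checks its hypotheses uniformly in $t\in[0,T]$. Since $\hat\g(\xi)\propto|\xi|^{\s-\d}$, the operator $\mu\mapsto\nab\g\ast\mu$ is Fourier multiplication by a symbol homogeneous of degree $\s+1-\d$, so $\mu^t\in\dot{W}^{\frac{\d}{p}+\s+2-\d,p}$ forces $u^t\in\dot{W}^{\frac{\d}{p}+1,p}$, and the Sobolev embedding $\dot{W}^{\frac{\d}{p}+1-\frac{\d-\s}{2},p}\hookrightarrow L^{\frac{2\d}{\d-\s-2}}$, valid exactly when $p\le\frac{2\d}{\d-\s-2}$, gives $\Dm^{\frac{\d-\s}{2}}u^t\in L^{\frac{2\d}{\d-\s-2}}$ in the sub-Coulomb case, all with norms bounded by $\|\mathsf{V}^t\|_{\dot{W}^{\frac{\d}{p}+1,p}}+\|\Dm^{\frac{\d-\s}{2}}\mathsf{V}^t\|_{L^{\frac{2\d}{\d-\s-2}}}\indic_{\s<\d-2}+C\|\mu^t\|_{\dot{W}^{\frac{\d}{p}+\s+2-\d,p}}$. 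The moment hypotheses on $\mu$ in \cref{thm:FI} follow from $\int_{\R^\d}|x|^{|\s|/2}d\mu^t<\infty$, and the required $L^q$ (resp.\ $\dot{\mathcal{C}}^\theta$ when $\s\ge\d-1$) bounds follow from $\mu^t\in L^1\cap\dot{W}^{\frac{\d}{p}+\s+2-\d,p}$ by embedding and interpolation. Substituting and recalling the definitions of $\zeta^\tau$ and $\mathscr{E}^t$, \cref{thm:FI} produces an inequality of the schematic form
\[
\frac{d}{dt}\Fr_N(\XN^t,\mu^t)\le C_p\|u^t\|_{\dot{W}^{\frac{\d}{p}+1,p}}(1+|\log\ep|)^{1-\frac1p}\Fr_N(\XN^t,\mu^t)+C\|u^t\|_{\dot{\mathcal{C}}^1}N^{\gamma}\ep\big(\Fr_N(\XN^t,\mu^t)+\cdots\big)^{\theta_\s}+\mathrm{(forcing)},
\]
with $\gamma=\tfrac{2(\s+1)}{\s}-1$, where $\theta_\s=\tfrac{\s+1}{\s}>1$ when $0<\s<\d$ and $\theta_\s<1$ when $\s<0$, and the forcing is of size $\sup_{t\le\tau\le T}\zeta^\tau$.

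The main obstacle, and the point where the regimes diverge, is the middle term. When $0<\s<\d$ it is \emph{superlinear} in the modulated energy, so an unmodified Gr\"onwall inequality cannot close; following \cite{rosenzweig_mean-field_2022-1,rosenzweig_mean-field_2022} the remedy is a bootstrap. One fixes $\ep=\ep(N,\mathscr{E}^0)$ as in \eqref{eq:MFepdef}, chosen (i)\ small enough that, so long as $\mathscr{E}^t$ stays within a fixed multiple of the claimed Gr\"onwall bound, $N^{\gamma}\ep(\mathscr{E}^t+\cdots)^{1/\s}\ll1$ and the superlinear term is dominated by the linear one; (ii)\ not so small that $(1+|\log\ep|)^{1-\frac1p}$, which lands in the exponent of the final bound, grows too fast; and (iii)\ so that the $\ep$-prefactored $\zeta$ forcing is of the size of $\mathscr{E}^0$. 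Because $\mathscr{E}^0$ itself contains $\zeta^0$, which contains $\ep$, the relation defining $\ep$ is implicit; one verifies by monotonicity that the associated fixed-point equation is solvable once the free constant $\delta$ is small, the smallness of $\delta$ being dictated by $\int_0^T\|\mu^t\|_{\dot{W}^{\frac{\d}{p}+\s+2-\d,p}}dt$ so as to absorb the $(1+|\log\ep|)^{1-\frac1p}$ factor in the exponential. A standard continuity argument on $[0,T]$ then promotes the resulting a priori bound to the stated estimate. When $-2<\s<0$ the middle term is instead \emph{sublinear} (exponent $\tfrac{|\s|-\vartheta}{|\s|}<1$, or absent with $\vartheta=1$ for $\s\le-1$), so one takes $\ep=N^{-\al}$, raises the inequality to the power $\tfrac{\vartheta}{|\s|}$ (resp.\ $\tfrac1{|\s|}$) to convert the sublinear contribution into a bounded forcing, and applies ordinary Gr\"onwall to $(\mathscr{E}^t)^{\vartheta/|\s|}$; no smallness of $\delta$ is then needed.

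Finally, to extract convergence of the empirical measures: under the hypothesis $\Fr_N(\XN^0,\mu^0)=O(N^{-m})$, all of the $\zeta$ and $\ep$-dependent quantities entering $\mathscr{E}^0$ are negative powers of $N$, so $\mathscr{E}^t$, and in particular $\Fr_N(\XN^t,\mu^t)$ together with its additive error, is $O(N^{-m'})$ for some $m'>0$ uniformly on $[0,T]$. A standard smearing/coercivity argument (see \cite[Chapter~4]{serfaty_lectures_2024}) then converts this into a quantitative bound on $\|\mu_N^t-\mu^t\|_{H^{-r}}$ for any $r>\d/2$: one regularizes $\mu_N^t$ at a scale $\eta$, bounds $\|\mu_N^t-(\mu_N^t)_\eta\|_{H^{-r}}$ by a power of $\eta$ (here $r>\d/2$ is what makes this smeared-minus-point error controllable), bounds $\|(\mu_N^t)_\eta-\mu^t\|_{\dot{H}^{(\s-\d)/2}}^2$ by $\Fr_N(\XN^t,\mu^t)$ plus an $\eta$-error arising from removing the particle self-interactions, interpolates between these Sobolev scales, and optimizes over $\eta$. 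This yields $\mu_N^t\to\mu^t$ in $H^{-r}$ uniformly on $[0,T]$, completing the proof.
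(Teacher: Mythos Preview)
Your proposal is correct and follows essentially the same route as the paper: differentiate the modulated energy, feed in \cref{thm:FI} with $v=u^t$ (checking its hypotheses via Sobolev embeddings from the scaling-critical regularity of $\mu^t$), obtain a differential inequality for $\mathscr{E}^t$ with a superlinear term ($\s>0$) or sublinear term ($\s<0$), and close it after the stated choice of $\ep$. The only notable difference is packaging: for the superlinear case $0<\s<\d$ the paper invokes an explicit generalized Gr\"onwall lemma (\cref{lem:Gron} with $a=\tfrac{\s+1}{\s}>1$, which directly gives the blow-up-time formula and then shows $T_*\ge T$ by taking $\delta$ small) rather than your bootstrap/continuity argument, and for $\s<0$ the same lemma with $a<1$ replaces your ``raise to a power then apply Gr\"onwall'' step --- but these are equivalent formulations of the same computation.
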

}

\begin{remark}\label{rem:randID}
    A natural way to produce initial point configurations $X_N$ such that $|\Fr_N(\XN,\mu)| \le CN^{-m}$ for $C,m>0$ independent of $N$ is by taking $X_N$ to be distributed according to a \emph{modulated Gibbs measure} (a terminology introduced in \cite{rosenzweig_modulated_2025})
    \begin{align}
        d\Q_{N,\be}(\mu)(\XN) \coloneqq \frac{e^{-\be N\Fr_N(\XN,\mu)}d\mu^{\otimes N}(\XN)}{\K_{N,\be}(\mu)}, \quad \K_{N,\be}(\mu)\coloneqq \int_{(\R^\d)^N}e^{-\be N\Fr_N(\XN,\mu)}d\mu^{\otimes N}(\XN),
    \end{align}
    where $\be\in [0,\infty]$ is a parameter, which may be interpreted as inverse temperature. The extreme $\be=0$ corresponds to $\mu$-iid points, while the extreme $\be=\infty$ formally corresponds to points that are uniformly distributed among the set of minimizers of $\XN\mapsto \Fr_N(\XN,\mu)$. When $\be=0$, one has $\E[\Fr_N(\XN,\mu)] = CN^{-1}$ and in fact, this scaling holds with high probability. When $\be=\infty$, one has $\min \Fr_N(\XN,\mu) = (-1)^{\sgn(\s)} C N^{\frac{\s}{\d}-1}$. One can produce scaling laws in $N$ interpolating between these two extremes by varying $\beta$. We  refer to \cite[Chapter 5]{serfaty_lectures_2024} for further details.
\end{remark}

\begin{remark}
    There is a large literature on the well-posedness of equations of the form \eqref{eq:MFlim}. We refer to the introduction of \cite{serfaty_mean_2020} for some comments in this direction. Here, we briefly mention that when $\s<\d-2$, solutions of the mean-field equation \eqref{eq:MFlim} are unique and global for probability density initial data in the critical space $L^{\frac{\d}{\d-\s-2}}$. This follows from essentially the same argument as for the Coulomb case $\s=\d-2$ with initial density in $L^\infty$ (e.g. see \cite{yudovich_non-stationary_1963,lin_hydrodynamic_2000,serfaty_mean_2014}).
\end{remark}

\cref{sec:BW} is devoted to the proofs of \Cref{thm:FI,thm:MF}. The defective commutator estimate, which is the workhorse, proceeds along lines similar to \cite{rosenzweig_mean-field_2022-1, rosenzweig_mean-field_2022}. Namely, we mollify the vector field $v$ at some scale $\ep$ and apply \cref{thm:FI'} with $v_\ep$. We then need to separately estimate the error term $\As_1[\XN,\mu,v-v_\ep]$ in terms of $\Fr_N(\XN,\mu),\ep,N$. Of course, as \cref{thm:FI'} requires at least Lipschitz regularity, which is not satisfied when $v\in\dot{W}^{\frac{\d}{p}+1,p}$, we have to pay a price in terms of divergence as $\ep\rightarrow 0$ (i.e. the defect). Simply repeating the arguments from \cite{rosenzweig_mean-field_2022-1, rosenzweig_mean-field_2022} does not yield the estimates of \cref{thm:FI}, and consequently also not yield \cref{thm:MF}. \cite{rosenzweig_mean-field_2022-1} is specific to the log case, which is well-suited to the log-Lipschitz regularity of vector fields in $\dot{W}^{\frac{\d}{p},p}$ (more generally, in the Zygmund space $\dot{\mathcal{C}}^1$). While the cruder argument in \cite{rosenzweig_mean-field_2022}, devised to handle the non-logarithmic nature of the Coulomb potential when $\d\ne 2$, leads to factors $\propto\log N$ in front of $\Fr_N(\XN^t,\mu^t)$. Such factors are only acceptable if one restricts to short times. 

As advertised in \cref{ssec:introObj}, the key new ingredient in our proof is the Brezis-Wainger-Hansson inequality \eqref{eq:BW}. More precisely, a corollary of \eqref{eq:BW}, obtained through the classical Morrey argument, is that if $1<p<\infty$, then for any $f\in\Sc(\R^\d)$,
\begin{align}\label{eq:BWll}
    \forall x,y\in\R^\d, \qquad |f(x)-f(y)| \le C \| f\|_{\dot{W}^{\frac{\d}{p}+1,p}}|x-y|\Big(1+|\log|x-y||\Big)^{1-\frac{1}{p}},
\end{align}
where $C=C(\d,p)>0$.\footnote{Strictly speaking, \cite{brezis_note_1980} shows the inequality with the inhomogeneous Sobolev norm $\| f\|_{{W}^{\frac{\d}{p}+1,p}}$ on the right-hand side. Replacing $f$ with $f_\la\coloneqq f(\la\cdot)$ and $(x,y)$ with $\la^{-1}(x,y)$, then letting $\la\rightarrow\infty$ yields \eqref{eq:BWll}.} This inequality may be rephrased as saying $L^\infty\cap \dot{W}^{\frac{\d}{p}+1,p}$ embeds into the log-Lipschitz space $\mathrm{Lip}_{\infty,\infty}^{(1,-1+\frac1p)}$ \cite{janson_generalizations_1980,MR2262450}. 
In applying \cref{thm:FI'} with $v_\ep$ as part of the proof, we need to quantify the divergence of $\|\nab v_\ep\|_{L^\infty}$ as $\ep\rightarrow 0^+$. In \cite{rosenzweig_mean-field_2022-1, rosenzweig_mean-field_2022}, the fact that $\dot{\mathcal{C}}^1$ functions are log-Lipschitz was used to bound $\|\nab v_\ep\|_{L^\infty} \lesssim \|v\|_{\dot{\mathcal{C}}^1}{(1+|\log \ep|)}$. Note that $\|v\|_{\dot{\mathcal{C}}^1}\lesssim \|v\|_{\dot{W}^{\frac{\d}{p}+1,p}}$ by Sobolev embedding. The modulus of continuity bound \eqref{eq:BWll} allows us to improve this bound to $\|\nab v_\ep\|_{L^\infty} \lesssim \|v\|_{\dot{W}^{\frac{\d}{p}+1,p}}(1+|\log\ep|)^{1-\frac1p}$ (see \cref{lem:mollerrs}). Combining with better estimates for the mollification error $v-v_\ep$ (see \cref{lem:molvepLL}), this improvement then yields \cref{thm:FI}. We refer to \cref{ssec:BWdc} for the complete proof.

To obtain the result of mean-field convergence \cref{thm:MF} from \cref{thm:FI}, we need to choose $\ep=\ep_t$ appropriately in a possibly time-dependent fashion. $\ep_t$ needs to be sufficiently small depending on $N,\Fr_N(\XN^t,\mu^t)$ to compensate for the divergent prefactors of $N$ and the powers of $\Fr_N(\XN^t,\mu^t)$ with exponent ${>}1$ in the estimates of \cref{thm:FI}. Thanks to the improvement $(1+|\log\ep|)^{1-\frac1p}$, this is possible to do and is still suitable for closing the differential inequality for $\Fr_N(\XN^t,\mu^t)$. We refer to \cref{ssec:BWmp} for the details. At the risk of being repetitive, we emphasize that if $1-\frac1p$ were replaced by $1$, then we would be stuck with the same short-time restriction as in \cite{rosenzweig_mean-field_2022}.

\begin{remark}
The reader should note that \Cref{thm:FI,thm:MF} do not cover the case $p=\infty$ (they also do not cover $p=1$, but this is less relevant). We return to this exception in \cref{ssec:RQmf}.
\end{remark}


\section{Counterexamples}\label{sec:CEX}
This section is devoted to the proofs of our counterexample results, \Cref{thm:CEF1,thm:CEF2}. We have already explained in \cref{ssec:MRreg} how  \cref{cor:CEF2} follows from \cref{thm:CEF2}, so we do not discuss this result further.
	
\subsection{Proof of \cref{thm:CEF1}\ref{item:CEF11}}\label{ssec:CEX11}
	

	Let $\chi\in C^\infty_c(\R^\d)$ such that $0\leq \chi\leq 1$, $\supp\chi\subset B(0,1/2)$, and $\chi\equiv 1$ on $B(0,1/4)$. Set $v\coloneqq (v_1,\ldots,v_\d)$, where 
		\begin{equation}
			v_1(x) \coloneqq -x_1\log(\log(1/|x|))\chi(x) \qquad \text{and} \qquad v_i(x) \coloneqq 0, \ 2\le i\le \d.
		\end{equation}
	Evidently, $v$ is compactly supported. It can also be made $C^\infty$ by convolution with $\chi_\ep$, for $\ep=\ep(r)>0$, where the parameter $r$ is as below. We omit the implementation of this step.
        
    We claim that $v'\in \BMO(\R)$ when $\d=1$ and $\nabla^{\otimes \d} v\in L^{\frac{\d}{\d-1}}(\R^\d)$ when $\d\geq 2$. Indeed, when $\d=1$
		\begin{equation}
			\frac{d}{dx}\Big(-x\log\big(\log(1/|x|)\big)\Big)=-\log\big(\log(1/|x|)\big)+\frac{1}{\log(1/|x|)}\in \BMO(\R),
		\end{equation}
	and thus so is $v$ since multiplication by $\chi$ preserves $\BMO(\R)$. On the other hand, when $\d\geq 2$ for all $k\geq 1$ there exists $C_{\d,k}>0$  such that
		\begin{equation}
			\forall |x|\le \frac12, \qquad \big |\nabla^{\otimes k} \log\big(\log(1/|x|)\big) \big|\leq \frac{C_{\d,k}}{|x|^k \log(1/|x|)}.
		\end{equation}
		With the Leibniz rule and the fact that $\supp\chi'\subset B(0,\frac14)^c$, the preceding estimate implies that
		\begin{equation}
		\forall |x| \le \frac12, \qquad	|\nabla^{\otimes \d} v(x)|\leq \frac{C_{\d,k}'}{|x|^{\d-1} \log(1/|x|)}.
		\end{equation}
		Since $\chi$ is supported on $B(0,\frac{1}{2})$, it follows that
		\begin{equation}
			\int_{\R^\d} |\nabla^{\otimes \d}v|^\frac{\d}{\d-1}\leq C\int_{B(0,1/2)} \frac{1}{|x|^{\d-1} \log(1/|x|)}= C\int_0^{1/2} \frac{1}{r\log(1/r)^{\frac{\d}{\d-1}}}<\infty,
		\end{equation}
		where in the first equality, we have used a radial coordinate change. By Sobolev embedding, this also implies that $\nabla v\in \BMO(\R^\d)$ and $|\nabla|^{\frac{\d-\s}{2}}v\in L^{\frac{2\d}{\d-\s-2}}(\R^\d)$ for all $-2< \s<\d-2$.
		
		Next, suppose that $f\in C^\infty_c(\R^\d)$ is a zero-mean function supported in $B(0,1/4)$ such that
		\begin{equation}
			\int_{(\R^\d)^2} \frac{|(x-y)^1|^2}{|x-y|^{\s+2}} f(x)f(y)>0.
		\end{equation}
		Such an $f$ exists since if $f$ is symmetric under permutation of coordinates, then
		\begin{align}
		\int_{(\R^\d)^2} \frac{|(x-y)^1|^2}{|x-y|^{\s+2}} f(x)f(y)
		&= \cd\int_{\R^\d}|\xi|^{\s-\d-2}\Big(|\xi|^2-(\d-\s)|\xi^1|^2\Big) |\hat f(\xi)|^2 \nn\\
        &= \cd \s\int_{\R^\d}|\xi|^{\s-\d-2}|\xi^1|^2|\hat{f}(\xi)|^2.
		\end{align}
		Let $f_r(x) \coloneqq r^{-\d}f(r^{-1}x)$ for $r>0$, so that
		\begin{multline}\label{eq:CES}
			\int_{(\R^\d)^2} (v(x)-v(y))\cdot\nabla\g(x-y) f_r(x)f_r(y)
			\\=\int _{(\R^\d)^2}\log\big(\log(1/|x|)\big) \frac{|(x-y)^1|^2}{|x-y|^{\s+2}} f_r(x)f_r(y) +\int_{(\R^\d)^2}  \log(\frac{\log(1/|x|)}{\log(1/|y|)}) \frac{y^1(x-y)^1}{|x-y|^{\s+2}} f_r(x) f_r(y).
		\end{multline}
		Changing coordinates, the first term on the right-hand side of~\eqref{eq:CES} equals
		\begin{equation}\label{eq:CEST1}
			r^{-\s}\log(\log(1/r))\int_{(\R^\d)^2} \frac{\log\big(\log(1/|x|)-\log(r)\big)}{\log\big(\log(1/r)\big)} \frac{|(x-y)^1|^2}{|x-y|^{\s+2}} f(x)f(y),
		\end{equation}
		while the second term equals
		\begin{equation}\label{eq:CEST2}
			r^{-\s}\int_{(\R^\d)^2}  \log(\frac{\log(1/|x|)-\log(r)}{\log(1/|y|)-\log(r)}) \frac{y^1(x-y)^1}{|x-y|^{\s+2}} f(x) f(y).
		\end{equation}
		By the dominated convergence theorem,
		\begin{equation}\label{eq:CEST1Lim}
			\lim_{r\rightarrow 0}\int_{(\R^\d)^2} \frac{\log\big(\log(1/|x|)-\log(r)\big)}{\log\big(\log(1/r)\big)} \frac{|(x-y)^1|^2}{|x-y|^{\s+2}} f(x)f(y)=\int_{(\R^\d)^2} \frac{|(x-y)^1|^2}{|x-y|^{\s+2}} f(x)f(y) 
		\end{equation}
		and
		\begin{equation}\label{eq:CEST2Lim}
			\lim_{r\rightarrow 0}\int_{(\R^\d)^2}  \log(\frac{\log(1/|x|)-\log(r)}{\log(1/|y|)-\log(r)}) \frac{y^1(x-y)^1}{|x-y|^{\s+2}} f(x) f(y)=0.
		\end{equation}
		Combining~\eqref{eq:CES}-\eqref{eq:CEST2Lim}, we see that there exits $C>0$ depending only on $\d,\s,f$ such that 
		\begin{equation}
        \int_{(\R^\d)^2} (v(x)-v(y))\cdot\nabla\g(x-y) f_r(x)f_r(y) \geq C^{-1}r^{-\s}\log(\log(1/r))-Cr^{-\s}
		\end{equation}
		for all sufficiently small $r$.
		
		Since $\|f_r\|_{\dot{H}^{\frac{\s-\d}{2}}}^2=r^{-\s}\|f\|_{\dot{H}^{\frac{\s-\d}{2}}}^2$, in total we thus have found that for all sufficiently small $r$ 
		\begin{equation}
        \frac{\displaystyle\bigg|\int_{(\R^\d)^2} (v(x)-v(y))\cdot\nabla\g(x-y) f_r(x)f_r(y)\bigg|}{\displaystyle\Big(\|\nabla v\|_{\BMO}+\indic_{\s<\d-2}\||\nabla|^{\frac{\d-\s}{2}}v\|_{L^{\frac{2\d}{\d-\s-2}}}\Big)\|f_r\|_{\dot{H}^{\frac{\s-\d}{2}}}^2}\geq C^{-1}\log(\log(1/r))-C.
		\end{equation}
		Taking $r\rightarrow 0$ concludes the claim.

	\begin{remark}
		We note that if $\d=1$ and $\s=0$, the above counter example fails since
		\begin{equation}
			 \int_{(\R^\d)^2} \frac{|(x-y)^1|^2}{|x-y|^2}f(x)f(y) =\bigg(\int_{\R^\d} f\bigg)^2=0
			 \end{equation}
		for all zero-mean functions $f$.
	\end{remark}
	
	\subsection{Proof of \cref{thm:CEF1}\ref{item:CEF12}}\label{ssec:CEX12}

	We use an equivalent formulation of~\eqref{eq:CE} for Schwartz functions with Fourier transforms supported away from $0$. If $F$ and $G$ are Schwartz with Fourier transforms supported away from the origin, then $f\coloneqq\Dm F$ and $g\coloneqq \Dm G$ are both Schwartz with Fourier transforms supported away from the origin. Additionally, since $\Dm\g = \cd\delta_0$, it holds that
		\begin{equation}\label{eq:CECor}
			\int_{\R^2} (v(x)-v(y))\cdot\nabla\g(x-y) f(x) g(y)=\mathsf{c}\int_{\R}v\cdot\Big(F'\Dm G +G'\Dm F\Big).
		\end{equation}
		Since $\| F\|_{\dot{H}^{\frac{1}{2}}}=\cd \|f\|_{\dot{H}^{-\frac{1}{2}}}$ and similarly for $G$ and $g$, it suffices to show that there exists $C>0$ such that
		\begin{equation}
			\bigg|\int_{\R} v\big( F'|\nabla|G+G'|\nabla|F\big)\bigg|\leq C\|v'\|_{\BMO}\|F\|_{\dot{H}^{1/2}}\|G\|_{\dot{H}^{1/2}}
		\end{equation}
		for all $F,G\in\mathcal{S}(\R)$ with Fourier support away from the origin.
		
		To this end, we integrate by parts 
		\begin{align}
			\int_{\R} v\big( F'|\nabla|G+G'|\nabla|F\big)&= -\int_{\R} \Big(v'\big(F\Dm G+G\Dm F\big)+ v F\p_x\Dm G +v G\p_x\Dm F\Big) \nn\\
			\notag&=-\int_{\R}\Big(v'\big(F\Dm G+G\Dm F\big) -\p_x\Dm^{\frac{1}{2}}(vG)\Dm^{\frac{1}{2}}F -\p_x\Dm^{\frac{1}{2}}(vF)\Dm^{\frac{1}{2}}G \Big)\nn\\
			&=-\int_{\R} \Big(v'F\Dm G-\Dm^{\frac{1}{2}}v'F\Dm^{\frac{1}{2}}G\Big)-\int_{\R}\Big(v'G\Dm F-\Dm^{\frac{1}{2}}v'G\Dm^{\frac{1}{2}}F\Big)
			\nn\\&\nn\ph+\int_{(\R)}\Big( v\big(\Dm^{\frac{1}{2}}F'\Dm^{\frac{1}{2}}G+\Dm^{\frac{1}{2}}G'\Dm^{\frac{1}{2}}F\big)+3 v'\Dm^{\frac{1}{2}}F \Dm^{\frac{1}{2}}G\Big)
			\\&\ph+ \int_{\R}\Big(T(v, F) \Dm^{\frac{1}{2}}G+ T(v, G) \Dm^{\frac{1}{2}}F\Big),\label{eq:Rieszsup0}
		\end{align}
		where $T$ is the bilinear operator
		\begin{equation}
			T(v, H) \coloneqq \p_x\Dm^{\frac{1}{2}}(v H) - \Dm^{\frac{1}{2}}v' H- v\Dm^{\frac{1}{2}}H' -\frac32 v'\Dm^{\frac{1}{2}}H
		\end{equation}
		defined for $H\in\mathcal{S}(\R)$.
		
		First, we note that after integrating by parts
		\begin{equation}
			\int_{\R} v\big(\Dm^{\frac{1}{2}}F'\Dm^{\frac{1}{2}}G+\Dm^{\frac{1}{2}}G'\Dm^{\frac{1}{2}}F\big)=-\int_{\R} v'\Dm^{\frac{1}{2}}F\Dm^{\frac{1}{2}}G.
		\end{equation}
		We can thus split~\eqref{eq:Rieszsup0} into a sum of three terms defined by
		\begin{align}
			&\text{Term}_1\coloneqq -\int_{\R} \Big(v'F\Dm G-\Dm^{\frac{1}{2}}v'F\Dm^{\frac{1}{2}}G-v'\Dm^{\frac{1}{2}}F \Dm^{\frac{1}{2}}G\Big),
			\\&\text{Term}_2 \coloneqq -\int_{\R} \Big(v'G\Dm F-\Dm^{\frac{1}{2}}v'G\Dm^{\frac{1}{2}}F-v'\Dm^{\frac{1}{2}}F \Dm^{\frac{1}{2}}G\Big),
			\\&\text{Term}_3 \coloneqq \int_{\R}\Big(T(v,F)\Dm^{\frac{1}{2}}G+ T(v,G)\Dm^{\frac{1}{2}}F\Big).
		\end{align}
		We bound each of these terms separately.
		
		We begin with $\text{Term}_1$. Integrating by parts, we have that
		\begin{equation}
			\text{Term}_1= -\int_{\R}\Big(\Dm^{\frac{1}{2}}(v' F) - \Dm^{\frac{1}{2}}v' F - v'\Dm^{\frac{1}{2}}F\Big)\Dm^{\frac{1}{2}}G.
		\end{equation}
		The expression inside the brackets is a Kenig-Ponce-Vega type commutator \cite{kenig_well-posedness_1993}, and by \cite[Remark 1.3, (1.7)]{li_kato-ponce_2019} with $s = \frac{1}{2}$, $f = F$, $g =v'$, we have that
		\begin{equation}
			\big\|\Dm^{\frac{1}{2}}(v'F) - \Dm^{\frac{1}{2}}v' F - v'\Dm^{\frac{1}{2}}F\big\|_{L^2} \le C\|v'\|_{\BMO}  \|F\|_{\dot{H}^{1/2}} \
		\end{equation}
		for some $C>0$. So by Cauchy-Schwarz,
		\begin{equation}\label{eq:Term1}
			|\text{Term}_1|\le C\|v'\|_{\BMO}\|F\|_{\dot{H}^{1/2}}\|G\|_{\dot{H}^{1/2}}.
		\end{equation}
		Swapping $F$ and $G$, we can bound $\text{Term}_2$ identically.
		
		For $\text{Term}_3$, we note that $T$ is a higher-order commutator and by \cite[Corollary 1.4(2)]{li_kato-ponce_2019} applied with $s= \frac32$, $f=H$, $g=v$, $A^s= \p_x\Dm^{\frac{1}{2}}$, $p=2$, $s_2=1$, $s_1 = \frac{1}{2}$, we obtain
		\begin{equation}
			\|T(v, H)\|_{L^2} \le C \|\Dm v\|_{\BMO} \|H\|_{\dot{H}^{1/2}},
		\end{equation}
		for some $C>0$. Note that since the Hilbert transform is bounded on BMO, one may bound $\|\Dm v\|_{\BMO} \le C\|v'\|_{\BMO}$. By Cauchy-Schwarz, it then follows that
		\begin{equation}\label{eq:Term3}
			|\text{Term}_3|\le C \|v'\|_{\BMO} \|F\|_{\dot{H}^{1/2}}\|G\|_{\dot{H}^{1/2}}.
		\end{equation}
        
		Combining~\eqref{eq:Term1} for $\Te_1,\Te_2$ and and~\eqref{eq:Term3} for $\Te_3$, the proof is complete.	
	
	\subsection{Proof of \cref{thm:CEF2}}\label{ssec:CEX2}

	Let $\phi$ be a radially symmetric Schwartz function on $\R^\d$ such that the Fourier transform $\hat\phi$ satisifes $0\leq \hat\phi\leq 1$, $\supp \hat\phi\subset B(0,1/4)$, and $\hat\phi\equiv 1$ on $B(0,1/8)$. Letting $e_1\coloneqq (1,0,\dotsc,0)$ and $k>2$, define the (real) vector field $v$ by
		\begin{equation}
			\hat{v}(\xi)\coloneqq  \imath e_1\Big( \hat\phi(\xi+k e_1)- \hat{\phi}(\xi-k e_1)\Big).
		\end{equation}
		and (real) functions $f,g$ by
		\begin{equation}
			\hat{f}(\xi)\coloneqq \Big(\hat\phi\big(\xi+(k+1)e_1)\big)+\hat\phi\big(\xi-(k+1)e_1\big)\Big),
		\end{equation}
		\begin{equation}
			\hat{g}(\xi)\coloneqq\Big(\hat\phi(\xi+e_1)+\hat\phi(\xi-e_1)\Big).
		\end{equation}
		Clearly, $f$ and $g$ are both Schwartz and have Fourier transforms supported away from $0$. We then note that
		\begin{equation}
			\int_{(\R^\d)^2} (v(x)-v(y))\cdot\nabla\g(x-y)f(x) g(y)=\int_{\R^\d} v\cdot\nabla\g*f g+\int_{\R^\d} v\cdot \nabla\g*g f.
		\end{equation}
		By Plancherel's theorem,
		\begin{equation}
			\int_{\R^\d} v\cdot\nabla\g*f g=\int_{(\R^\d)^2} \hat{v}(\xi_1)\cdot 2\pi i (\xi_1-\xi_2)\hat\g(\xi_1-\xi_2)\hat{f}(\xi_1-\xi_2)\hat{g}(\xi_2).
		\end{equation}
		Unpacking the definitions of $v,f$ and $g$, the preceding right-hand side is equal to
		\begin{multline}
			-\sum_{\ep_i\in\{\pm 1\}} \ep_1\int_{(\R^\d)^2} \hat\phi(\xi_1+\ep_1 ke_1) e_1\cdot (\xi_1-\xi_2)\hat{\g}(\xi_1-\xi_2)\hat{\phi}\big(\xi_1-\xi_2+\ep_2(k+1)e_1\big)\hat{\phi}(\xi_2+\ep_3 e_1)
			\\=-\sum_{\ep_i\in\{\pm 1\}} \ep_1\int_{(\R^\d)^2}\hat\phi(\zeta_1)\hat\phi(\zeta_2)\hat\phi\big(\zeta_1-\zeta_2+((\ep_3+\ep_2) +(\ep_2-\ep_1)k)e_1\big)
			\\\times e_1\cdot\big(\zeta_1-\zeta_2+(\ep_3-\ep_1k)e_1\big)\hat\g\big(\zeta_1-\zeta_2+(\ep_3-\ep_1k)e_1\big),
		\end{multline}
		where the final equality follows from a change of variables. Since $\supp \hat{\phi}\subset B(0,1/4)$, the integrals in the sum above are only nonzero if 
		\begin{equation}
			|\zeta_1-\zeta_2+((\ep_3+\ep_2) +(\ep_2-\ep_1)k)e_1)|\leq \frac{1}{4}
		\end{equation}
		for some $\zeta_i$ in $B(0,1/4)$. Since $k>2$, we must have $(\ep_1,\ep_2,\ep_3)=\pm(1,1,-1)$. Using symmetry, we have shown that
		\begin{equation}
			\int_{\R^\d} v\cdot\nabla \g*f g=2\int_{(\R^\d)^2} \hat\phi(\zeta_1)\hat\phi(\zeta_2)\hat\phi(\zeta_1-\zeta_2) \big((\zeta_2-\zeta_1)\cdot e_1 +1+k\big)\hat\g\big(\zeta_2-\zeta_1+(1+k)e_1\big).
		\end{equation}
		Since $|\zeta_i|\leq \frac{1}{4}$ on the support of $\hat\phi(\zeta_i)$ and $\hat\g$ is decreasing by \cref{ass:pot}, we may crudely bound the right-hand side above using Young's convolution inequality to obtain
		\begin{equation}\label{eq:CB1}
			\bigg|\int_{\R^\d} v\cdot\nabla\g*f g\bigg|\leq Ck\hat\g(k) \|\hat\phi\|_{L^{3/2}}^3,
		\end{equation}
		where $C>0$ is a constant only depending on $\d$.
		
		By the same reasoning as above,
		\begin{multline}
			\int_{\R^\d} v\cdot \nabla \g*g f =-\sum_{\ep_i\in\{\pm 1\}} \ep_1 \int_{(\R^\d)^2} \hat{\phi}(\zeta_1)\hat\phi(\zeta_2)\hat\phi\Big(\zeta_1-\zeta_2+\big((\ep_2+\ep_3)+(\ep_3-\ep_1)k\big)e_1\Big)
			\\\times e_1\cdot\Big(\zeta_1-\zeta_2+\big(\ep_3+(\ep_3-\ep_1)k\big)e_1\Big)\hat\g\Big(\zeta_1-\zeta_2+\big(\ep_3+(\ep_3-\ep_1)k\big)e_1\Big).
		\end{multline}
		For an integral in the sum on the preceding right-hand side to be nonzero, it must now be the case that $(\ep_1,\ep_2,\ep_3)=\pm(1,-1,1)$. Using symmetry, we have thus shown that
		\begin{equation}
			\int_{\R^\d} v\cdot \nabla \g*g f =2\int_{(\R^\d)^2}\hat\phi(\zeta_1)\hat\phi(\zeta_2)\hat\phi(\zeta_1-\zeta_2)\big(1-e_1\cdot(\zeta_1-\zeta_2)\big)\hat\g(\zeta_2-\zeta_1+e_1).
		\end{equation}
		Since $(1-e_1\cdot(\zeta_1-\zeta_2))\geq \frac{1}{2}$ and $|\zeta_2-\zeta_1+e_1|\leq 2$ on the support of $\hat\phi(\zeta_1)\hat\phi(\zeta_2)$, we have found the lower bound
		\begin{equation}\label{eq:CB2}
			\int_{\R^\d} v\cdot \nabla \g*g f\geq \hat\g(2)\int_{(\R^\d)^2} \hat\phi(\zeta_1)\hat\phi(\zeta_2)\hat\phi(\zeta_1-\zeta_2)\geq C^{-1}\hat\g(2),
		\end{equation}
		where the last inequality uses that $\hat\phi=1$ on $B(0,1/8)$ and $C>0$ only depends on $\phi$. 
		
		On the other hand, it is immediate from Plancherel's theorem and Fourier support considerations that
		\begin{align}
			\|g\|_{\g_t} \lesssim \|\phi\|_{L^2}, \label{eq:GB}\\
            \| f\|_{\g_t}\lesssim  \hat\g(tk)^{1/2}\|\phi\|_{L^2} \label{eq:FB}
		\end{align}
		for implicit constants depending on $\d,\g,t$, while Fourier inversion shows that
		\begin{equation}\label{eq:VB1}
			\|\nabla v\|_{L^\infty}\lesssim k\|\phi\|_{L^1}.
		\end{equation}
		On the other hand, since $\supp\widehat v\subset \{k-1 \le |\xi|\le k+1\}$, we have
		$\sigma(|\xi|)\le \sup_{r\in[k-1,k+1]}\sigma(r)$ on $\supp\widehat v$. {If $2\le p\le \infty$, then by the Hausdorff-Young inequality and the Fourier support of $v$, 
		\begin{equation}\label{eq:VB2}
			\|\sigma(|\nabla|)v\|_{L^p}
			\lesssim
			\Big(\sup_{r\in[k-1,k+1]}\sigma(r)\Big)\|\hat\phi\|_{L^\frac{p-1}{p}},
		\end{equation}
		for any $p\in[1,\infty]$, with constants depending only on $\d,p$. If $1\le p<2$, then under the assumption that $\p_x^n\sigma$ is essentially bounded away from the origin for $n\le \d+1$, it follows from Young's inequality that
        \begin{align}\label{eq:VB2'}
            \|\sigma(|\nabla|)v\|_{L^p}
			\lesssim \max_{n\le \d+1}\sup_{r\in [k-1,k+1]} |\p_r^n\sigma(r)| \|\phi\|_{L^p}.
        \end{align}}

		Combining~\eqref{eq:CB1} and~\eqref{eq:CB2}-\eqref{eq:VB2'}, in total have found that
		\begin{multline}
			\frac{\displaystyle\bigg|\int_{(\R^\d)^2} (v(x)-v(y))\cdot\nabla\g(x-y)f(x)g(y)\bigg|}{\displaystyle\Big(\|\nabla v\|_{L^\infty} + \| \sigma(|\nabla|)v\|_{L^p}\Big)\|f\|_{\g_t}\|g\|_{\g_t}}\\\geq \frac{C^{-1}-Ck\hat\g(k)}{C\Big(k+\indic_{1\le p<2}\max_{n\le \d+1}\|\p_r^n\sigma\|_{L^\infty([k-1,k+1])} + \indic_{2\le p\le \infty}\|\sigma\|_{L^\infty([k-1,k+1])}\Big)\hat\g(tk)^{1/2}},
		\end{multline}
		for some $C=C(\d,\s,\phi)>0$. By~\eqref{eq:decay}, we have that $k\hat{g}(k) $ and $k\hat{\g}(tk)^{1/2}$ vanish as $k\rightarrow\infty$, while by hypothesis,
		\begin{align}
		    \lim_{k\rightarrow\infty}\Big(\indic_{1\le p<2}\max_{n\le \d+1}\|\p_r^n\sigma\|_{L^\infty([k-1,k+1])} + \indic_{2\le p\le \infty}\|\sigma\|_{L^\infty([k-1,k+1])}\Big)\hat{\g}(tk)^{1/2}=0.
		\end{align}
		Taking $k\rightarrow \infty$ completes the proof.

\section{Defective commutator estimates and mean-field convergence}\label{sec:BW}
This section is devoted to the results \Cref{thm:FI,thm:MF} advertised in \cref{ssec:MRmf} about defective commutator estimates and scaling-critical modulated energy estimates, respectively. The remainder of this section is organized as follows. In \cref{ssec:BWmoll}, we show some preliminary rates of convergence for mollifiers. In \cref{ssec:BWme}, we recall some properties of the modulated energy that are useful in the sequel. In \cref{ssec:BWdc}, we prove our defective commutator estimate \cref{thm:FI}, which is the primary technical result. Finally, in \cref{ssec:BWmp}, we prove \cref{thm:MF}.

Some comments on notation. Given $\mu\in L^p$, we use throughout this section the notation $\la,\ka$ from \eqref{eq:ladef}, \eqref{eq:kadef}.
When $\mu=\mu^t$, we write $\la_t,\ka_t$ to indicate the time dependence.

\subsection{Mollification}\label{ssec:BWmoll}
Let $\chi\in C_c^\infty(\R^\d)$ be an even bump function which is $1$ on $B(0,\frac14)$, zero outside $B(0,1)$, and satisfies $1\geq \chi\geq 0$, and $\int_{\R^\d} \chi dx = 1$. Let $v:\R^\d\rightarrow\R^\d$ be a vector field, and set
\begin{equation}
v_{\ep}(x) \coloneqq (v\ast\chi_\ep)(x), \qquad \chi_\ep(x) \coloneqq \ep^{-\d}\chi(x/\ep),
\end{equation}
where the convolution is performed componentwise. It thus holds that $v_\vep$ is $C^\infty$ and
\begin{equation}
\forall  k\ge 0, \qquad \|\nabla^{\otimes k} v_\ep\|_{L^\infty} \lesssim_k \ep^{-k}\| v\|_{L^\infty}.
\end{equation}
Additionally, if $\|\cdot\|_{X}$ is any translation-invariant semi-norm, then we also have $\|v_\ep\|_{X} \leq \|v\|_{X}$.

The purpose of this subsection is to establish estimates for the mollification error $v-v_\ep$ as well as to estimate how the Lipschitz seminorm of $v_\ep$ diverges as $\ep\rightarrow 0$ under certain regularity assumptions on $v$. The main result is the following lemma. 

\begin{lemma}\label{lem:mollerrs}
For any integer $k\ge 0$ and reals $0<b\le a\le 1$, there exists a constant $C=C(a)>0$ such that
\begin{align}
    \|\nab^{\otimes k}v-\nab^{\otimes k}v_\ep\|_{L^\infty} &\le C\|\nab^{\otimes k}v\|_{\dot{\mathcal{C}}^a}\ep^a,\label{eq:mollerrs1}\\
    \|\nab^{\otimes k}v-\nab^{\otimes k}v_\ep\|_{\dot{\mathcal{C}}^b} &\le C\|\nab^{\otimes k}v\|_{\dot{\mathcal{C}}^a}\ep^{a-b}.\label{eq:mollerrs1'}
\end{align}
Suppose that there exists $\delta_k>0$ and an increasing function $\omega_k:[0,\delta_k]\rightarrow [0,\infty)$ such that $|\nab^{\otimes k}v(x)-\nab^{\otimes k}v(y)|\le \omega_k(|x-y|)$. Then there is a constant $C>0$ such that
\begin{align}
    \|\nab^{\otimes k}v-\nab^{\otimes k}v_\ep\|_{L^\infty} \le C\omega_k(2\ep), \qquad {0\le \ep \le \frac{\delta_k}{2},}\label{eq:mollerrs2} \\
    \|\nab^{\otimes k}v_\ep\|_{L^\infty} \le C\ep^{-1}\omega_{k-1}(2\ep), \qquad {0<\ep\le \frac{\delta_{k-1}}{2}.}\label{eq:mollerrs3}
\end{align}
\end{lemma}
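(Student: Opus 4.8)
The lemma collects four standard mollification estimates, and the plan is to prove them one at a time by elementary arguments, using only the defining property $\int_{\R^\d}\chi = 1$ and the compact support and evenness of $\chi$. The key observation that makes all of this work is that, for any locally integrable vector field $w$, one can write $w(x) - w_\ep(x) = \int_{\R^\d}(w(x) - w(x-y))\chi_\ep(y)\,dy$, so that pointwise differences of $w$ with its mollification are controlled by moduli of continuity of $w$ over balls of radius $\ep$. Applying this to $w = \nab^{\otimes k}v$ immediately handles \eqref{eq:mollerrs1} and \eqref{eq:mollerrs2}: in the first case, $|\nab^{\otimes k}v(x) - \nab^{\otimes k}v(x-y)| \lesssim \|\nab^{\otimes k}v\|_{\dot{\mathcal{C}}^a}|y|^a$ for $a\in(0,1]$ (using that the Zygmund seminorm controls the Hölder seminorm for $a<1$, and for $a=1$ that $\dot{\mathcal{C}}^1$ functions are log-Lipschitz, hence in particular have a modulus of continuity we can integrate against the compactly supported $\chi_\ep$ — or alternatively just note that a $\dot{\mathcal{C}}^1$ second-difference bound gives a first-difference bound against a \emph{symmetric} kernel like $\chi_\ep$, which is the cleaner route since $\chi$ is even); in the second case, one directly bounds by $\omega_k(|y|) \le \omega_k(2\ep)$ on the support of $\chi_\ep$, after the standard trick of symmetrizing the integral to exploit $\int y\,\chi_\ep(y)\,dy = 0$ so that only the second-order modulus enters when $k$-th derivatives are merely Zygmund.

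For \eqref{eq:mollerrs3}, the idea is different: one does \emph{not} want to differentiate $v$ all $k$ times, since $\nab^{\otimes k}v$ need not be bounded. Instead, write $\nab^{\otimes k}v_\ep = \nab^{\otimes k-1}v \ast \nab\chi_\ep = \nab^{\otimes k-1}v \ast (\ep^{-1}(\nab\chi)_\ep)$, and then use that $\nab\chi$ has zero integral (being the derivative of a compactly supported function) to write $\nab^{\otimes k}v_\ep(x) = \ep^{-1}\int (\nab^{\otimes k-1}v(x-y) - \nab^{\otimes k-1}v(x))\,(\nab\chi)_\ep(y)\,dy$, which is bounded by $\ep^{-1}\|\nab\chi\|_{L^1}\,\omega_{k-1}(\ep)$, and since $\omega_{k-1}$ is increasing this is $\lesssim \ep^{-1}\omega_{k-1}(2\ep)$ as claimed (the factor $2$ is cosmetic, arising from matching the constants in the other estimates).

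For the Zygmund-seminorm bound \eqref{eq:mollerrs1'}, I would argue by splitting the scales. Fix $h$ and consider the second difference $\Delta_h^2 g \coloneqq g(\cdot+h) + g(\cdot-h) - 2g(\cdot)$ applied to $g = \nab^{\otimes k}(v - v_\ep)$. If $|h| \ge \ep$, bound $\|\Delta_h^2 g\|_{L^\infty} \le 4\|g\|_{L^\infty} \le C\|\nab^{\otimes k}v\|_{\dot{\mathcal{C}}^a}\ep^a \le C\|\nab^{\otimes k}v\|_{\dot{\mathcal{C}}^a}|h|^b\ep^{a-b}$ using \eqref{eq:mollerrs1} and $\ep^{a-b} \le (|h|/\ep)^{b}\ep^{a-b}$... wait, one needs $\ep^a \le |h|^b \ep^{a-b}$, i.e. $\ep^b \le |h|^b$, which holds since $|h|\ge\ep$. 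If $|h| < \ep$, then instead use $\Delta_h^2(v - v_\ep) = (\Delta_h^2 v) - (\Delta_h^2 v)\ast\chi_\ep$ and bound $\|\Delta_h^2(\nab^{\otimes k}v)\|_{L^\infty} \le \|\nab^{\otimes k}v\|_{\dot{\mathcal{C}}^a}|h|^a \le \|\nab^{\otimes k}v\|_{\dot{\mathcal{C}}^a}|h|^b\ep^{a-b}$ using $|h|<\ep$; the mollified copy satisfies the same bound since $\|\cdot\|_{L^\infty}$ is unchanged under convolution with $\chi_\ep$. Dividing by $|h|^b$ and taking the supremum over $h$ gives \eqref{eq:mollerrs1'}.

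\textbf{Main obstacle.} The only genuinely delicate point is the endpoint $a = 1$ in \eqref{eq:mollerrs1} and \eqref{eq:mollerrs1'}, since $\dot{\mathcal{C}}^1 = B^1_{\infty,\infty}$ functions are merely log-Lipschitz, not Lipschitz, so a naive first-difference estimate fails. The fix, as indicated above, is to always work with the \emph{symmetrized} form of the mollification, exploiting $\int y\,\chi_\ep(y)\,dy = 0$ (evenness of $\chi$): then $v(x) - v_\ep(x) = \tfrac12\int(v(x+y) + v(x-y) - 2v(x))\chi_\ep(y)\,dy$, and the integrand is exactly a second difference, controlled by $\|v\|_{\dot{\mathcal{C}}^1}|y|$ — which \emph{is} integrable against $\chi_\ep$ and yields the clean bound $\lesssim\|v\|_{\dot{\mathcal{C}}^1}\ep$ with no logarithmic loss. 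Applying the same symmetrization to $\nab^{\otimes k}v$ gives the general-$k$ statement. I would present this symmetrization device once at the start of the proof and then invoke it uniformly; everything else is bookkeeping with $\|\chi\|_{L^1}$, $\|\nab\chi\|_{L^1}$, and the monotonicity of the moduli $\omega_k$.
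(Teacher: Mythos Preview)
Your proposal is correct and, for \eqref{eq:mollerrs1}, \eqref{eq:mollerrs2}, and \eqref{eq:mollerrs3}, follows essentially the same route as the paper: the symmetrization identity $v(x)-v_\ep(x)=\tfrac12\int(2v(x)-v(x+y)-v(x-y))\chi_\ep(y)\,dy$ to handle the Zygmund endpoint in \eqref{eq:mollerrs1}, the direct first-difference bound for \eqref{eq:mollerrs2} (the symmetrization aside you insert there is unnecessary, since the hypothesis already gives a first-difference modulus), and the zero-mean-of-$\nabla\chi$ trick for \eqref{eq:mollerrs3}.

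The one genuine divergence is \eqref{eq:mollerrs1'}. The paper does not argue by scale-splitting on $|h|\gtrless\ep$; instead it invokes the interpolation inequality
\[
\|f\|_{\dot{\mathcal{C}}^b}\le \|f\|_{L^\infty}^{(a-b)/a}\,\|f\|_{\dot{\mathcal{C}}^a}^{b/a}
\]
applied to $f=\nab^{\otimes k}(v-v_\ep)$, combining \eqref{eq:mollerrs1} with the trivial bound $\|\nab^{\otimes k}(v-v_\ep)\|_{\dot{\mathcal{C}}^a}\le 2\|\nab^{\otimes k}v\|_{\dot{\mathcal{C}}^a}$ to get $\ep^{a\cdot(a-b)/a}=\ep^{a-b}$ in one line. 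Your scale-splitting argument is also correct and is in effect a bare-hands proof of this interpolation in the particular instance needed; the paper's route is shorter if one is willing to cite the interpolation, while yours is more self-contained.
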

\begin{proof}
Observe that since $\chi$ is even and $\int_{\R^\d}\chi_\ep=1$,
\begin{align}
v(x)-v_\ep(x) = \int_{\R^\d}\pa*{v(x)-v(x-y)}\chi_\ep(y)dy &= \int_{\R^\d}\pa*{v(x)-v(x+y)}\chi_\ep(-y)dy \nn\\
&=  \int_{\R^\d}\pa*{v(x)-v(x+y)}\chi_\ep(y)dy. \label{eq:vvepdiff}
\end{align}
Hence,
\begin{equation}
v(x)-v_\ep(x) = \frac{1}{2}\int_{\R^\d}\pa*{2v(x)-v(x-y)-v(x+y)}\chi_\ep(y)dy,
\end{equation}
which implies that
\begin{align}
    \|v-v_\ep\|_{L^\infty} &\le \frac{1}{2}\int_{\R^\d}\left\|\frac{(2v(\cdot)-v(\cdot-y)-v(\cdot+y))}{|y|^{a}}\right\|_{L^\infty} |y|^{a}\chi_\ep(y)dy \nn\\
    &\le \frac{\|v\|_{\dot{\mathcal{C}}^a}}{2}\int_{\R^\d} |y|^{a}\chi_\ep(y)dy \nn\\
    &=\frac{\ep^{a}\|v\|_{\dot{\mathcal{C}}^a}}{2}\int_{\R^\d}|y|^a\chi(y)dy,
\end{align}
where the penultimate inequality follows the change of variable $y/\ep \mapsto y$. Replacing $v$ by $\nab^{\otimes k}v$ in the preceding argument, for any $k\ge 0$, then yields the desired conclusion \eqref{eq:mollerrs1}.

Let $0<b\le a\le 1$. 
Using the interpolation estimate $\|\cdot\|_{\dot{\mathcal{C}}^\theta}\le \|\cdot\|_{\dot{\mathcal{C}}^{\theta_1}}^t \|\cdot\|_{\dot{\mathcal{C}}^{\theta_2}}^{1-t}$ for $\theta_1\le \theta\le \theta_2$ and $t = \frac{\theta_2-\theta}{\theta_2-\theta_1}$, we have
\begin{align}
    \|v-v_\ep\|_{\dot{\mathcal{C}}^b} \le \|v-v_\ep\|_{L^\infty}^{\frac{a-b}{a}} \|v-v_\ep\|_{\dot{\mathcal{C}}^a}^{\frac{b}{a}} \le (C\|v\|_{\dot{\mathcal{C}}^a}\ep^a)^{\frac{a-b}{a}}(2\|v\|_{\dot{\mathcal{C}}^a})^{\frac{b}{a}} = C'\|v\|_{\dot{\mathcal{C}}^a}\ep^{a-b},
\end{align}
where we have used \eqref{eq:mollerrs1} to obtain the second inequality. Replacing $v$ by $\nab^{\otimes k}v$ then yields \eqref{eq:mollerrs1'}.

Now suppose that $|v(x)-v(y)| \le \omega(|x-y|)$ for all $|x-y|\le \delta$ for an increasing $\omega: [0,\delta]\rightarrow [0,\infty)$. From the identity \eqref{eq:vvepdiff}, it follows that if $\ep \le \delta/2$,
\begin{align}
    \|v-v_\ep\|_{L^\infty} \le \int_{\R^\d}\omega(y)\chi_\ep(y)dy = \int_{B(0,2)}\omega(\ep y)\chi(y)dy \le \omega(2\ep)\int_{B(0,2)}\chi(y)dy,
\end{align}
where we have again made the change of variable $y/\ep \mapsto y$ and used the monotonicity of $\omega$. Replacing $v$ by $\nab^{\otimes k}v$ then yields \eqref{eq:mollerrs2}.

Finally, using that $\nabla\chi_\ep$ is odd by assumption that $\chi_\ep$ is even, and therefore $\int_{\R^\d}\nabla\chi_\ep = 0$, we have
\begin{align}
\nabla^{\otimes k} v_\ep(x) = \int_{\R^\d}\pa*{\nab^{\otimes k-1}v(x-y)-\nab^{\otimes k-1}v(x)}\otimes\nabla\chi_\ep(y)dy.
\end{align}
Suppose that $|\nab^{\otimes k-1}v(x)-\nab^{\otimes k-1}v(y)| \le \omega_{k-1}(|x-y|)$ for $|x-y|\le \delta_{k-1}$, where $\omega_{k-1}:[0,\delta_{k-1}]\rightarrow[0,\infty)$ is increasing. Making the change of variable $y/\ep \mapsto y$,
\begin{align}
\|\nabla^{\otimes k} v_\ep\|_{L^\infty} &\leq \frac{1}{\ep}\int_{\R^\d}|\nab^{\otimes k-1}v(x-\ep y)-\nab^{\otimes k-1}v(x)| |\nabla\chi(y)|dy \nn\\
&\le\frac{1}{\ep}\int_{\R^\d}\omega_{k-1}(\ep y) |\nabla\chi(y)|dy \nn\\
&\leq \frac{\omega_{k-1}(2\ep)}{\ep}\int_{\R^\d} |\nabla\chi(y)|dy,
\end{align}
where we use the monotonicity of $\omega_{k-1}$ to obtain the final line. This yields \eqref{eq:mollerrs3} and completes the proof.
\end{proof}

\subsection{Properties of modulated energy}\label{ssec:BWme}
In this subsection, we recall some properties of the modulated energy \eqref{eq:modenergy} that are important for the proofs of \Cref{thm:FI,thm:MF}.

The first result is the following proposition taken from \cite[Proposition 2.15]{hess-childs_sharp_2025} in the singular case $0\le \s<\d$ and \cite[Example 4]{modeste_characterization_2024} in the nonsingular case $-2<\s<0$, which shows the coercivity of the modulated energy in the sense that it controls a squared negative-order Sobolev norm up to $O(N^{-1+\frac{\s p}{\d(p-1)}})$ error for densities $\mu\in L^p$. In particular, this yields that the modulated energy is almost nonnegative and that vanishing of the modulated energy implies weak convergence of the empirical measure to the target measure $\mu$. 

\begin{prop}\label{prop:coer}
Let $0\le \s<\d$ and $p>\frac{\d}{\d-\s}$. There exists a constant $C=C(\d,\s,p)>0$ such that the following holds: for any $\mu\in L^1\cap L^p$ with $\int_{\R^\d}d\mu = 1$ and $\int_{(\R^\d)^2}|\g|(x-y)d|\mu|^{\otimes 2}(x,y)<\infty$ if $\s=0$, $\la \le 1$, and any pairwise distinct configuration $X_N\in (\R^\d)^N$, it holds that
\begin{multline}\label{eq:coer1}
\left\|\frac1N\sum_{i=1}^N \delta_{x_i}-\mu\right\|_{H^{-r/2}}^2  \le C \Big(\Fr_N(\ux_N,\mu) +  \frac{(\g(\la) +C)}{2N}\indic_{\s=0} \\
+ C \frac{\g(\la)}{2N}\indic_{\s>0} +C\|\mu\|_{L^p}\la^{\frac{\d(p-1)}{p}-\s}\Big),
\end{multline}
If $-2<\s<0$, then for any $\mu \in L^1$ with $\int_{\R^\d}d\mu = 1$ and $\int_{\R^\d}|\g|(x-y)d|\mu|^{\otimes2}<\infty$,
\begin{align}\label{eq:coer2}
    \Fr_N(\XN,\mu) = \cd\left\|\frac1N\sum_{i=1}^N\delta_{x_i}-\mu\right\|_{\dot{H}^{\frac{\s-\d}{2}}}^2.
\end{align}
\end{prop}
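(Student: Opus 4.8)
\emph{Overview and the nonsingular case \eqref{eq:coer2}.} The two displayed inequalities call for genuinely different arguments: \eqref{eq:coer2} is essentially immediate, whereas \eqref{eq:coer1} requires renormalization. When $-2<\s<0$ the kernel $\g$ extends continuously to the origin with $\g(0)=0$, so removing the diagonal in the definition of $\Fr_N$ changes nothing: $(\mu_N-\mu)^{\otimes2}$ puts mass $1/N$ on $\triangle$, but it is weighted there by $\g(0)=0$. Hence $2\Fr_N(\XN,\mu)=\int_{(\R^\d)^2}\g(x-y)\,d(\mu_N-\mu)(x)\,d(\mu_N-\mu)(y)$, and since $\widehat\g(\xi)=\cd|2\pi\xi|^{\s-\d}$ (up to the normalizing constant), the Plancherel theorem applied to the tempered distribution $\mu_N-\mu$ gives $\Fr_N=\cd\|\mu_N-\mu\|_{\dot{H}^{(\s-\d)/2}}^2$. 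The only point to check is finiteness; but $\g(0)=0$ and the sum over the $N^2$ atom pairs is finite, the hypothesis $\int_{(\R^\d)^2}|\g|(x-y)\,d|\mu|^{\otimes2}<\infty$ controls the $\mu\otimes\mu$ part, and a short fractional-moment estimate (using $\s>-2$) handles the $\mu_N\otimes\mu$ cross terms. This is \cite[Example 4]{modeste_characterization_2024}.

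\emph{The singular case \eqref{eq:coer1}: the smearing comparison.} Now $\mu_N$ has atoms and $\|\mu_N-\mu\|_{\dot{H}^{(\s-\d)/2}}=+\infty$, so the point charges must be renormalized, and I would smear at scale $\eta=\la$: set $\mu_N^{(\la)}:=\mu_N*\delta_0^{(\la)}$, where $\delta_0^{(\la)}$ is the uniform probability measure on $\partial B(0,\la)$. For the Riesz kernel, Newton's theorem gives that $\g*\delta_0^{(\la)}$ equals $\g$ outside $B(0,\la)$ and equals the constant $\g(\la)$ inside; in particular $\g*\delta_0^{(\la)}\le\g$ pointwise and $(\g*\delta_0^{(\la)}*\delta_0^{(\la)})(0)=\g(\la)$. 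Expanding $\cd\|\mu_N^{(\la)}-\mu\|_{\dot{H}^{(\s-\d)/2}}^2=\int_{(\R^\d)^2}\g\,d(\mu_N^{(\la)}-\mu)^{\otimes2}$ into the three pieces $\tfrac1{N^2}\sum_{i,j}(\g*\delta_0^{(\la)}*\delta_0^{(\la)})(x_i-x_j)$, $-2\iint(\g*\delta_0^{(\la)})\,d\mu_N\,d\mu$, and $\iint\g\,d\mu^{\otimes2}$, I would (i) split off the diagonal of the double sum, which equals $\g(\la)/N$, and dominate the rest by $\tfrac1{N^2}\sum_{i\ne j}\g(x_i-x_j)$ using $\g*\delta_0^{(\la)}\le\g$; and (ii) write $-2\iint(\g*\delta_0^{(\la)})\,d\mu_N\,d\mu=-2\iint\g\,d\mu_N\,d\mu+2\iint(\g-\g*\delta_0^{(\la)})\,d\mu_N\,d\mu$ and bound the last term — $\g-\g*\delta_0^{(\la)}$ being nonnegative, supported in $B(0,\la)$, and $\le\g$ there — by Hölder's inequality as $\iint(\g-\g*\delta_0^{(\la)})\,d\mu_N\,d\mu\le\|\g\|_{L^{p'}(B(0,\la))}\|\mu\|_{L^p}\sim\|\mu\|_{L^p}\la^{\frac{\d(p-1)}{p}-\s}$, the exponent $\tfrac{\d}{p'}-\s$ being finite precisely because $p>\tfrac{\d}{\d-\s}$ (and the $\s=0$ computation produces the stated $\g(\la)$ term in place of a power of $\la$). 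Collecting, $\cd\|\mu_N^{(\la)}-\mu\|_{\dot{H}^{(\s-\d)/2}}^2\le2\Fr_N(\XN,\mu)+\frac{\g(\la)}{N}+C\|\mu\|_{L^p}\la^{\frac{\d(p-1)}{p}-\s}$.

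\emph{Conclusion and the main obstacle.} It remains to pass from $\dot{H}^{(\s-\d)/2}$-control of $\mu_N^{(\la)}-\mu$ to $H^{-r/2}$-control of $\mu_N-\mu$. Since $r>\d\ge\d-\s$ one has $\jp{2\pi\xi}^{-r}\lesssim|2\pi\xi|^{\s-\d}$ on all of $\R^\d$, hence $\|\cdot\|_{H^{-r/2}}\lesssim\|\cdot\|_{\dot{H}^{(\s-\d)/2}}$; and replacing $\mu_N^{(\la)}$ by $\mu_N$ costs $\|\mu_N-\mu_N^{(\la)}\|_{H^{-r/2}}^2\lesssim\la^{\min(4,r-\d)}$ by a direct Fourier estimate on $\delta_0-\delta_0^{(\la)}$, which is absorbed into the right-hand side. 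I expect the real difficulty to lie not in any one step but in the bookkeeping — keeping all constants and the logarithmic $\s=0$ correction exactly as claimed, making the various error exponents line up against the definition $\la=(N\|\mu\|_{L^p})^{-1/\d}$, and handling the low-frequency regime where the $\dot{H}^{(\s-\d)/2}$ seminorm is delicate. This is precisely \cite[Proposition 2.15]{hess-childs_sharp_2025}, whose argument I would follow.
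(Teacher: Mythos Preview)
The paper does not supply its own proof of this proposition; it simply imports the result from \cite[Proposition 2.15]{hess-childs_sharp_2025} for $0\le\s<\d$ and \cite[Example 4]{modeste_characterization_2024} for $-2<\s<0$, exactly the references you cite and propose to follow. Your overall strategy---smearing, comparing to the energy of the smeared measure, then passing to the inhomogeneous Sobolev norm---is the standard one and matches what those references do.

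One technical point in your sketch deserves correction: your Newton's theorem claim, that $\g*\delta_0^{(\la)}$ equals $\g$ outside $B(0,\la)$ and the constant $\g(\la)$ inside, is exact only in the Coulomb case $\s=\d-2$. For super-Coulomb $\s>\d-2$ the kernel $\g$ is \emph{subharmonic} away from the origin, so spherical averaging \emph{increases} it, and the inequality $\g*\delta_0^{(\la)}\le\g$ you use fails. The remedy, which is what \cite{hess-childs_sharp_2025} (and the earlier literature going back to \cite{petrache_next_2017,serfaty_mean_2020}) actually does, is to take $\delta_0^{(\la)}$ not as the uniform measure on the sphere but as the specific radial probability measure on $\overline{B(0,\la)}$ for which $\g*\delta_0^{(\la)}=\min(\g,\g(\la))$ exactly; such a measure exists for every $0\le\s<\d$ and reduces to the spherical surface measure only when $\s=\d-2$. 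With that correction your steps (i)--(ii) go through verbatim.
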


The second result, taken from \cite[Corollary 2.14]{hess-childs_sharp_2025}, shows that the modulated energy controls the small-scale interactions. 

\begin{prop}\label{cor:MEcount}
Let $p>\frac{\d}{\d-\s}$ and $\mu \in L^1\cap L^p$. Define the nearest-neighbor type distance
\begin{align}\label{eq:rsi_def}
\rs_i \coloneqq \frac14\min\Big(\min_{1\le j\le N: j\ne i} |x_i-x_j|, \la\Big).
\end{align}
There exists $C= C(\d,\s,p)>0$, such that for every $\eta\le\la$,
\begin{multline}\label{eq:MEcount2}
	\Fr_N(X_N,\mu) + {\frac{(\g(\eta) + C)}{2N}\indic_{\s=0} + C\frac{\g(\eta)}{2N}\indic_{\s>0}} +C\|\mu\|_{L^p}\eta^{\frac{\d(p-1)}{p}-\s} \\
	\ge \begin{cases}\displaystyle \frac{1}{2N^2}\sum_{i=1}^N \g(4\rs_i), & {\s >0} \\ \displaystyle \frac{1}{2N^2}\sum_{i=1}^N \g(4\rs_i/\eta), & {\s=0},\end{cases}
\end{multline}
and
\begin{multline}\label{eq:MEcount1}
	\Fr_N(X_N,\mu) + {\frac{(\g(\eta) + C )}{2N}\indic_{\s=0} + C\frac{\g(\eta)}{2N}\indic_{\s>0}} +C\|\mu\|_{L^p}\eta^{\frac{\d(p-1)}{p}-\s} \\
	\ge \begin{cases}\displaystyle\frac{1}{2C N^2}\sum_{\substack{1\le i\ne j \le N \\ |x_i-x_j|\le \eta}} |x_i-x_j|^{-\s}, & {\s> 0} \\ \displaystyle\frac{1}{2CN^2}\sum_{\substack{1\le i\ne j \le N \\ |x_i-x_j|\le \eta}} -\log\left(\frac{x_i-x_j}{\eta}\right) , & {\s=0}. \end{cases}
\end{multline}
\end{prop}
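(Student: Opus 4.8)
The plan is to follow the charge-smearing (renormalization) method for Riesz gases \cite{petrache_next_2017,serfaty_mean_2020,nguyen_mean-field_2022} (see also \cite[Chapter 4]{serfaty_lectures_2024}); this is how \cite[Corollary 2.14]{hess-childs_sharp_2025} is proved. First, one realizes $\g$, up to the constant $\cd$, as the fundamental solution of a (possibly degenerate) local operator on an extended space, so that $\Fr_N(\XN,\mu)$ becomes a multiple of the weighted Dirichlet energy of the potential $h$ generated by $\tfrac1N\sum_i\delta_{x_i}-\mu$. One then truncates each charge $\delta_{x_i}$ at a scale of order $\rs_i$ by replacing it with the uniform probability measure on the sphere $\partial B(x_i,\rs_i)$ (in the extended variables); the factor $\tfrac14$ in \eqref{eq:rsi_def} guarantees that the balls $B(x_i,\rs_i)$ are pairwise disjoint and that $\rs_i\le\la\le 1$.

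The key step is the splitting formula, which expresses $\Fr_N(\XN,\mu)$ as a sum of three pieces: (i) the weighted Dirichlet energy of the \emph{smeared} potential, which is nonnegative up to a ``nearest-field'' truncation error of size $C\|\mu\|_{L^p}\la^{\frac{\d(p-1)}{p}-\s}$ --- this is the point where only $\mu\in L^p$ with $p>\frac{\d}{\d-\s}$ is needed, via H\"older and a Hardy--Littlewood--Sobolev estimate, just as in the coercivity bound \cref{prop:coer}, together with an extra $\tfrac{\g(\la)}{2N}$ contribution when $\s=0$; (ii) the self-energies of the smeared charges, which contribute (up to harmless constants) $\tfrac{1}{2N^2}\sum_i\g(4\rs_i)$; and (iii) cross terms between $\mu$ and $\sum_i(\delta_{x_i}-\delta_{x_i}^{(\rs_i)})$, bounded by $C\|\mu\|_{L^p}\eta^{\frac{\d(p-1)}{p}-\s}$ after optimizing over a common truncation scale $\eta\le\la$ (and the corresponding $\tfrac{\g(\eta)}{2N}$ term when $\s=0$). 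Discarding (i) by nonnegativity, and using $\g(4\rs_i)>0$ when $\s>0$, yields \eqref{eq:MEcount2}; when $\s=0$ the self-energy term is $-\log(4\rs_i)$, which the additive $\tfrac{\g(\eta)}{2N}$ term lets one rewrite as $\g(4\rs_i/\eta)$.

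Finally, \eqref{eq:MEcount1} should follow from \eqref{eq:MEcount2} by a packing argument: whenever $|x_i-x_j|\le\eta\le\la$ one has $\rs_i\le\tfrac14|x_i-x_j|$, hence $|x_i-x_j|^{-\s}\lesssim\g(4\rs_i)$ (and symmetrically in $j$), and a dyadic decomposition in $|x_i-x_j|$ together with disjointness of the balls $B(x_i,\rs_i)$ to control multiplicities converts $\sum_{i\ne j,\,|x_i-x_j|\le\eta}|x_i-x_j|^{-\s}$ (resp.\ the logarithmic analogue) into a constant multiple of $\sum_i\g(4\rs_i)$ (resp.\ $\sum_i\g(4\rs_i/\eta)$) plus a remainder absorbed into the right-hand side of \eqref{eq:MEcount2}. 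I expect the main obstacle to be the sharp $L^p$-control of the $\mu$-dependent truncation errors --- pushing $p$ down to the threshold $\frac{\d}{\d-\s}$ rather than working with $L^\infty$ --- together with the separate bookkeeping of the logarithm in the endpoint $\s=0$; the degenerate-weight computations on the extension space for non-Coulomb $\s$ are routine but need care. For the full argument we refer to \cite[Section 2]{hess-childs_sharp_2025}.
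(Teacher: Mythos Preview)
The paper does not prove this proposition but simply imports it as \cite[Corollary 2.14]{hess-childs_sharp_2025}, and your sketch accurately describes the charge-smearing/splitting-formula argument carried out in \cite[Section 2]{hess-childs_sharp_2025}. Your outline of the three pieces in the splitting formula, the role of the $L^p$ assumption in controlling the $\mu$-dependent truncation errors, and the packing argument to pass from \eqref{eq:MEcount2} to \eqref{eq:MEcount1} is consistent with that reference.
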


The final result, which is taken from \cite{rosenzweig_wasserstein_nodate} and is an elementary consequence of H\"older's inequality and the Fourier representation of Riesz potentials, shows that in the nonsingular case $-2<\s<0$, the modulated energy/squared MMD controls moments of order $<|\s|/2$.

\begin{prop}\label{prop:moms}
Suppose that $-2<\s<0$, and let $0<r<\frac{|\s|}{2}$. There exists a constant $C=C(\d,\s,r)>0$ such that for any $x_0\in \R^\d$, $\mu\in L^1$ with $\int_{\R^\d}d\mu = 1$ and $\int_{(\R^\d)^2}|\g|(x-y)d|\mu|^{\otimes 2}<\infty$ and $\XN\in (\R^\d)^N$, it holds that
\begin{align}\label{eq:moms}
    \int_{\R^\d}|x-x_0|^rd\Big(\frac1N\sum_{i=1}^N\delta_{x_i}-\mu\Big) \le C (1+\|\mu\|_{L^1})^{{\cre1-\frac{2r}{|\s|}}}\Fr_N^{\frac{r}{|\s|}}(\XN,\mu).
\end{align}
\end{prop}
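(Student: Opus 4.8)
The plan is to realize $|x-x_0|^r$ via the L\'evy--Khintchine-type Fourier representation of sub-quadratic powers: for $0<r<2$ there is a finite constant $\kappa_{\d,r}>0$ with
\[
|z|^r=\kappa_{\d,r}\int_{\R^\d}\frac{1-\cos(2\pi\xi\cdot z)}{|\xi|^{\d+r}}\,d\xi ,
\]
the integral converging absolutely exactly because $0<r<2$ (numerator $O(|\xi|^2)$ at the origin, $O(1)$ at infinity); equivalently, modulo a multiple of $\delta_0$, $\mathcal F[|\cdot|^r]=c_{\d,r}|\xi|^{-\d-r}$. All three quantities in \eqref{eq:moms} are translation invariant, so I take $x_0=0$. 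Set $f\coloneqq \frac1N\sum_{i=1}^N\delta_{x_i}-\mu$, a finite signed measure with $f(\R^\d)=0$ and $\|f\|_{\mathrm{TV}}\le 1+\|\mu\|_{L^1}$. The first step is to collapse the representation after integrating $df$: for $-2<\s<0$ the hypothesis $\int_{(\R^\d)^2}|\g|(x-y)\,d|\mu|^{\otimes2}<\infty$ says $\int_{(\R^\d)^2}|x-y|^{|\s|}\,d|\mu|^{\otimes2}<\infty$, which — choosing $y_0$ with $\int|x-y_0|^{|\s|}\,d|\mu|(x)<\infty$ (possible since $|\mu|\not\equiv0$) and using $|x|^{|\s|}\lesssim|x-y_0|^{|\s|}+|y_0|^{|\s|}$ — forces $\int|x|^{|\s|}\,d|\mu|<\infty$; since $r<|\s|$ and $|\mu|$ is finite, interpolation gives $\int|x|^r\,d|f|<\infty$. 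As $1-\cos\ge0$, Tonelli applies, and using $\int df=0$ (this is where $\int d\mu=1$ enters) the inner integral becomes $\int(1-\cos(2\pi\xi\cdot x))\,df(x)=-\operatorname{Re}\hat f(\xi)$, whence
\[
\Big|\int_{\R^\d}|x|^r\,df(x)\Big|\le\kappa_{\d,r}\int_{\R^\d}\frac{|\hat f(\xi)|}{|\xi|^{\d+r}}\,d\xi .
\]

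The second step is a split of this integral at a free scale $R>0$. On $\{|\xi|<R\}$ I apply Cauchy--Schwarz against the energy weight, writing $|\xi|^{-\d-r}=|\xi|^{(\s-\d)/2}\cdot|\xi|^{-\d/2-r+|\s|/2}$ and invoking \eqref{eq:coer2}, which gives $\int_{\R^\d}|2\pi\xi|^{\s-\d}|\hat f(\xi)|^2\,d\xi=\cd^{-1}\Fr_N(\XN,\mu)$; this contribution is at most a constant times $\Fr_N(\XN,\mu)^{1/2}\big(\int_{|\xi|<R}|\xi|^{|\s|-\d-2r}\,d\xi\big)^{1/2}\sim\Fr_N(\XN,\mu)^{1/2}R^{|\s|/2-r}$, the $\xi$-integral being finite at the origin precisely because $r<|\s|/2$. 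On $\{|\xi|\ge R\}$ I use the crude bound $|\hat f(\xi)|\le\|f\|_{\mathrm{TV}}\le 1+\|\mu\|_{L^1}$ together with $\int_{|\xi|\ge R}|\xi|^{-\d-r}\,d\xi\sim R^{-r}$ (finite at infinity because $r>0$), getting $(1+\|\mu\|_{L^1})R^{-r}$. Choosing $R$ so that $\Fr_N(\XN,\mu)^{1/2}R^{|\s|/2}=1+\|\mu\|_{L^1}$ (note $\Fr_N(\XN,\mu)>0$, since $f\not\equiv0$) balances the two contributions at a constant multiple of $(1+\|\mu\|_{L^1})^{1-2r/|\s|}\,\Fr_N(\XN,\mu)^{r/|\s|}$, which is exactly \eqref{eq:moms}; the exponent $1-2r/|\s|$ is nonnegative precisely because $r<|\s|/2$, so $\|f\|_{\mathrm{TV}}$ may be replaced by its upper bound $1+\|\mu\|_{L^1}$.

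I do not expect a genuine obstacle here: this is essentially a real interpolation estimate, with the energy seminorm $\dot H^{(\s-\d)/2}$ governing the low frequencies and the total-variation norm governing the high frequencies, the two hypotheses $0<r$ and $r<|\s|/2$ being exactly the integrability thresholds that close the split. The only point needing a little care is the low-frequency bookkeeping — verifying $\hat f(0)=0$ and justifying the Fubini step that collapses $\int|x|^r\,df$, for which one uses the moment bound $\int|x|^{|\s|}\,d|\mu|<\infty$ extracted from the finite-energy hypothesis. The resulting constant $C(\d,\s,r)$ degenerates as $r\to0^+$ and as $r\to(|\s|/2)^-$, which is harmless.
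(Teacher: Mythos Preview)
Your proof is correct and matches the approach the paper points to: the paper does not give its own proof but cites \cite{rosenzweig_wasserstein_nodate} and describes the result as ``an elementary consequence of H\"older's inequality and the Fourier representation of Riesz potentials,'' which is exactly your argument --- the L\'evy--Khintchine representation of $|x|^r$, Cauchy--Schwarz against the energy weight $|\xi|^{\s-\d}$ at low frequency, the total-variation bound at high frequency, and optimization over the splitting scale. The minor bookkeeping (justifying Fubini via $\int|x|^r\,d|f|<\infty$, and noting $\Fr_N>0$ since $\mu\in L^1$ cannot equal a sum of Diracs) is handled correctly.
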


\subsection{Defective commutator estimate}\label{ssec:BWdc}

We now combine the functional inequalities of \cref{thm:FI'} with the mollification error estimates from \cref{lem:mollerrs} to obtain a defective commutator estimate in the form of \cref{thm:FI}
 below. As commented previously, this weakening of the Lipschitz assumption is necessary in the case $v=\M\nabla\g\ast\mu$, where $\mu$ belongs to the critical space $\dot{W}^{\frac{\d}{p}+\s+2-\d,p}$ for $1<p<\infty$. Below, $\chi_\ep$ and $v_\ep$ are as in the previous subsections. 

In what follows, introduce the commutator kernel notation
\begin{align}\label{eq:kvdef}
    k_v(x,y) \coloneqq \nab\g(x-y)\cdot (v(x)-v(y)).
\end{align}

The proof of \cref{thm:FI} follows similar lines to \cite{rosenzweig_mean-field_2022,rosenzweig_mean-field_2022-1}. The main technical ingredient is \cref{lem:molvepLL} below, which controls the error from replacing $v$ by the mollified vector field $v_\ep$ (cf. \cite[Lemma 4.2]{rosenzweig_mean-field_2022-1} and \cite[Lemma 4.3]{rosenzweig_mean-field_2022}). The proof of the lemma is streamlined in comparison with previous works, taking advantage of better mollification estimates in \cref{lem:mollerrs} and small-scale control in \cref{cor:MEcount}. Note that we omit the log case $\s=0$ in \cref{lem:molvepLL} as this is covered by the same argument in \cite{rosenzweig_mean-field_2022-1}, and the omission simplifies the presentation.

\begin{lemma}\label{lem:molvepLL}
Assume that $-2<\s<\d$ and $\s\ne 0$. Let $v\in \dot{\mathcal{C}}^1$ and $\mu\in L^1$ with $\int_{\R^\d}d\mu =1$ and $\int_{(\R^\d)^2}|\g|(x-y)d|\mu|^{\otimes2}<\infty$. Further, assume the following:
    \begin{enumerate}[(a)]
        \item if $-2<\s\le-1$, then $\int_{\R^\d}|x|^{|\s|-1}d|\mu| < \infty$;
        \item if $-1<\s<0$, then $\int_{\R^\d}|x|^{r}d\mu < \infty$ for some $r<\frac{|\s|}{2}$;
        \item if $-1\le \s<\d-1$, then $\mu\in L^p$ for some $\frac{\d}{\d-\s-1}<p\leq \infty$ ($p=1$ is allowed if $\s=-1$);
        \item if $\s\ge \d-1$, then $\mu\in \dot{\mathcal{C}}^{\theta}$ for some $\theta>\s+1-\d$.
    \end{enumerate}

    Then for any pairwise distinct configuration $\XN\in (\R^\d)^N$ and $\ep\ge0$,
\begin{multline}\label{eq:molvepLL}
\Bigg|\int_{(\R^\d)^2\setminus\triangle} \k_{(v-v_\ep)}(x,y)d\Big(\frac{1}{N}\sum_{i=1}^N\delta_{x_i}-\mu\Big)^{\otimes 2}\Bigg| \\
\le {CN^{\frac{2(\s+1)}{\s}-1}\|v\|_{\dot{\mathcal{C}}^1}\ep}\Big(\Fr_N(\XN,\mu)  + C_p\|\mu\|_{L^p}\la^{\frac{\d(p-1)}{p}-\s}\Big)^{\frac{\s+1}{\s}}\indic_{0<\s<\d} \\
+\Bigg(C_{\vartheta,\vartheta'}\|v\|_{\dot{\mathcal{C}}^1}\ep^{{\vartheta'}}\Big(\int_{\R^\d}|x|^{|\s|-\vartheta}d\mu + (1+\|\mu\|_{L^1})\Fr_N^{\frac{|\s|-\vartheta}{|\s|}}(\XN,\mu)\Big) + C\|v\|_{\dot{\mathcal{C}}^1}\ep\Bigg)\indic_{-1< \s<0}\\
+  {C\|v\|_{\dot{\mathcal{C}}^1}\ep}\Bigg(\int_{\R^\d}|x|^{|\s|-1}d\mu +  (1+\|\mu\|_{L^1})\Fr_N^{\frac{|\s|-1}{|\s|}}(\XN,\mu)\Bigg) \indic_{-2<\s\le-1}\\
+ \ep(1+\|\mu\|_{L^1})\|v\|_{\dot{\mathcal{C}}^1} \begin{cases} \int_{\R^\d}|x|^{|\s|-1}d\mu + \Fr_N^{\frac{|\s|-1}{|\s|}}(\XN,\mu) , & {-2<\s\le -1} \\ C_q\|\mu\|_{L^1}^{1-\frac{(\s+1)q}{\d(q-1)}} \|\mu\|_{L^q}^{\frac{(\s+1)q}{\d(q-1)}}, & {-1\le \s<\d-1} \\ C_\theta\|\mu\|_{L^1}^{1-\frac{\s-\d+1}{\theta}} \|\mu\|_{\dot{\mathcal{C}}^\theta}^{\frac{\s-\d+1}{\theta}} + C_\theta \ep^{\d-\s-1}\|\mu\|_{L^1}^{1-\frac{\s-\d+1}{\theta}}\|\mu\|_{L^\infty}^{\frac{\s-\d+1}{\theta}}, & {\s\ge \d-1} \end{cases},
\end{multline}
where {$\vartheta \in [|\s|-r,|\s|)$, $0<\vartheta'<\vartheta$}, $C=C(\d,\s)>0$ and $C_{\vartheta,\vartheta'}, C_p,C_\theta>0$ additionally depend on $(\vartheta,\vartheta'), p,\theta$, respectively.
\end{lemma}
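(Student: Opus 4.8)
The plan is to estimate the commutator kernel $k_{v-v_\ep}(x,y)=\nab\g(x-y)\cdot\bigl((v-v_\ep)(x)-(v-v_\ep)(y)\bigr)$ by exploiting that the mollification error $w:=v-v_\ep$ is \emph{small} in $L^\infty$ while still retaining the modulus of continuity of $v$: from \cref{lem:mollerrs} one has $\|w\|_{L^\infty}\lesssim\|v\|_{\dot{\mathcal{C}}^1}\ep$, and since $\dot{\mathcal{C}}^1$ functions are log-Lipschitz one also has $|w(x)-w(y)|\lesssim\|v\|_{\dot{\mathcal{C}}^1}|x-y|\bigl(1+|\log|x-y||\bigr)$. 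I would expand $\bigl(\tfrac1N\sum_i\delta_{x_i}-\mu\bigr)^{\otimes 2}=\mu_N^{\otimes 2}-2\mu_N\otimes\mu+\mu^{\otimes 2}$ with $\mu_N:=\tfrac1N\sum_i\delta_{x_i}$, and split each resulting off-diagonal integral of $k_w$ into a near-diagonal region and its complement: for the empirical--empirical piece the natural cutoff is $\la$ (so that \cref{cor:MEcount} applies), while for the pieces involving $\mu$ one uses either the scale $\ep$ or a $\mu$-dependent scale-invariant radius optimized as in the classical estimate for Riesz-potential convolutions. A structural fact used throughout is that $\nab\g$ is odd, hence $\int_{B(0,r)}\nab\g=0$, which permits subtracting $w(x)$ and $\mu(x)$ inside near-diagonal integrals --- indispensable in the range $\s\ge\d-1$, where $|x-y|^{-\s-1}$ is not locally integrable.

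For the singular range $0<\s<\d$, set $E:=\Fr_N(\XN,\mu)+C_p\|\mu\|_{L^p}\la^{\frac{\d(p-1)}{p}-\s}$. The empirical--empirical near-diagonal contribution is bounded, using $|w(x_i)-w(x_j)|\le 2\|w\|_{L^\infty}\lesssim\|v\|_{\dot{\mathcal{C}}^1}\ep$ (it is crucial to keep the factor $\ep$ rather than spend the Lipschitz modulus), by $\|v\|_{\dot{\mathcal{C}}^1}\ep\cdot\tfrac1{N^2}\sum_{i\ne j,\,|x_i-x_j|\le\la}|x_i-x_j|^{-\s-1}$. Writing $|x_i-x_j|^{-\s-1}=\bigl(|x_i-x_j|^{-\s}\bigr)^{\frac{\s+1}{\s}}$, using $|x_i-x_j|\ge 4\rs_i$, and invoking H\"older's inequality together with the two consequences $\tfrac1{N^2}\sum_{i\ne j,\,\le\la}|x_i-x_j|^{-\s}\lesssim E$ and $\rs_i^{-\s}\lesssim N^2E$ of \cref{cor:MEcount}, one produces the term $N^{\frac{2(\s+1)}{\s}-1}\|v\|_{\dot{\mathcal{C}}^1}\ep\,E^{\frac{\s+1}{\s}}$; the complementary region is controlled crudely by $\|v\|_{\dot{\mathcal{C}}^1}\ep\,\la^{-\s-1}\|\mu_N-\mu\|_{\mathrm{TV}}^2$ and absorbed. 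For the mixed and macroscopic pieces: when $-1\le\s<\d-1$, $|x-y|^{-\s-1}$ is locally integrable, so H\"older's inequality (with $\mu\in L^q$, $q>\frac{\d}{\d-\s-1}$), the bound $\|w\|_{L^\infty}$, and an optimization of the near-scale yield $\ep(1+\|\mu\|_{L^1})\|v\|_{\dot{\mathcal{C}}^1}C_q\|\mu\|_{L^1}^{1-\frac{(\s+1)q}{\d(q-1)}}\|\mu\|_{L^q}^{\frac{(\s+1)q}{\d(q-1)}}$; when $\s\ge\d-1$, one replaces $\mu(y)$ by $\bigl(\mu(y)-\mu(x)\bigr)+\mu(x)$ and uses $\int_{B(0,r)}\nab\g=0$, estimating the first part with $\|\mu\|_{\dot{\mathcal{C}}^\theta}$ for $\theta>\s+1-\d$ and the second with $\|\mu\|_{L^\infty}$ on the scale $\ep$, producing the $\|\mu\|_{\dot{\mathcal{C}}^\theta}^{\frac{\s-\d+1}{\theta}}$ and $\ep^{\d-\s-1}\|\mu\|_{L^\infty}^{\frac{\s-\d+1}{\theta}}$ contributions.

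For the nonsingular range $-2<\s<0$, $\nab\g$ is at worst locally integrable but grows like $|x|^{|\s|-1}$ at infinity. When $-2<\s\le-1$ even $|\nab\g|$ is bounded near the origin, so $|k_w(x,y)|\lesssim\|v\|_{\dot{\mathcal{C}}^1}\ep\,|x-y|^{|\s|-1}\lesssim\|v\|_{\dot{\mathcal{C}}^1}\ep\,\bigl(|x|^{|\s|-1}+|y|^{|\s|-1}\bigr)$; integrating against $d(\mu_N-\mu)^{\otimes 2}$, using $\|\mu_N-\mu\|_{\mathrm{TV}}\le 2$, the moment hypothesis $\int|x|^{|\s|-1}d|\mu|<\infty$, and \cref{prop:moms} (applicable since $|\s|-1<|\s|/2$) to bound $\tfrac1N\sum_i|x_i|^{|\s|-1}$ by $(1+\|\mu\|_{L^1})\Fr_N^{\frac{|\s|-1}{|\s|}}$, gives the stated estimate. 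When $-1<\s<0$, one interpolates near the diagonal between $\|w\|_{L^\infty}\lesssim\|v\|_{\dot{\mathcal{C}}^1}\ep$ and the log-Lipschitz modulus of $w$ to get $|w(x)-w(y)|\lesssim\|v\|_{\dot{\mathcal{C}}^1}\ep^{\vartheta'}|x-y|^{1-\vartheta'}\bigl(1+|\log|x-y||\bigr)^{1-\vartheta'}$, whence $|k_w(x,y)|\lesssim\|v\|_{\dot{\mathcal{C}}^1}\ep^{\vartheta'}|x-y|^{|\s|-\vartheta'}\bigl(1+|\log|x-y||\bigr)^{1-\vartheta'}$ carries a positive power of $|x-y|$ up to the harmless logarithm; bounding this against $d(\mu_N-\mu)^{\otimes 2}$ by the moments $\int|x|^{|\s|-\vartheta}d\mu$ and $\Fr_N^{\frac{|\s|-\vartheta}{|\s|}}$ via \cref{prop:moms} (valid since $|\s|-\vartheta\le r<|\s|/2$), together with the far-field bound coming from $\mu\in L^q$, yields the remaining terms, the residual pure $\ep$-terms arising from the bounded small-scale contributions.

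The hard part will be the empirical--empirical near-diagonal estimate in the singular regime: extracting from $\tfrac1{N^2}\sum_{i\ne j,\,|x_i-x_j|\le\la}|x_i-x_j|^{-\s-1}$ a bound of the form $N^{\frac{2(\s+1)}{\s}-1}E^{\frac{\s+1}{\s}}$ --- that is, trading the extra singularity $|x-y|^{-1}$ of $\nab\g$ relative to the energy integrand $\g\sim|x-y|^{-\s}$ for a power of $N$ and a super-linear power of the regularized modulated energy. This requires using \emph{both} consequences of \cref{cor:MEcount} (the per-particle bound on $\rs_i^{-\s}$ and the pair-sum bound) together with the a priori inequality $\rs_i\le\la/4$, and it is precisely here that the merely $\dot{\mathcal{C}}^1$ (rather than Lipschitz) regularity of $v$ shows up as a power of $\ep$. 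A secondary obstacle is the renormalization in the sub-range $\s\ge\d-1$, where one must combine the vanishing mean $\int_{B(0,r)}\nab\g=0$ with the Zygmund regularity of $\mu$ to control the non-integrable kernel; all the remaining estimates are routine, if somewhat lengthy, bookkeeping.
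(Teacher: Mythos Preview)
Your proposal is correct and follows essentially the same route as the paper: decompose $(\mu_N-\mu)^{\otimes 2}$ into the three pieces $\Te_1,\Te_2,\Te_3$, use the pointwise kernel bound $|k_{v-v_\ep}(x,y)|\le C\|v\|_{\dot{\mathcal{C}}^1}\ep\,|x-y|^{-\s-1}$ from \cref{lem:mollerrs}, control $\Te_1$ in the singular range via \cref{cor:MEcount}, control $\Te_1$ in the nonsingular range via the subadditivity $|x-y|^{|\s|-1}\le|x|^{|\s|-1}+|y|^{|\s|-1}$ together with \cref{prop:moms}, interpolate with the log-Lipschitz bound when $-1<\s<0$, and for $\Te_2,\Te_3$ use the integrability of $\mu$ (or its Zygmund regularity when $\s\ge\d-1$).

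There are only cosmetic differences. For $\Te_1$ in the range $0<\s<\d$ the paper does \emph{not} split near/far at scale $\la$: it simply bounds every summand by $C\rs_i^{-\s-1}$ (since $|x_i-x_j|\ge 4\rs_i$ for all $j\ne i$), then uses the embedding $\|\cdot\|_{\ell^{\s+1}}\le\|\cdot\|_{\ell^\s}$ applied to $(\rs_i^{-1})_i$ and the bound $\sum_i\rs_i^{-\s}\lesssim N^2E$ from \cref{cor:MEcount}; this avoids having to absorb a separate far-field term. For $\Te_2,\Te_3$ the paper packages your near/far optimization and the oddness trick into the ready-made interpolation bound $\|\nab h^f\|_{L^\infty}\le C_q\|f\|_{L^1}^{1-\frac{(\s+1)q}{\d(q-1)}}\|f\|_{L^q}^{\frac{(\s+1)q}{\d(q-1)}}$ (resp.\ $C_\theta\|f\|_{L^1}^{1-\frac{\s-\d+1}{\theta}}\|f\|_{\dot{\mathcal{C}}^\theta}^{\frac{\s-\d+1}{\theta}}$), together with the identity $\int k_w(\cdot,y)d\mu(y)=w\cdot\nab h^\mu+\div h^{w\mu}$, rather than spelling out the splitting by hand. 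Both formulations are equivalent.
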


Before proving \cref{lem:molvepLL}, let us show how  \cref{thm:FI} follows from it and \cref{thm:FI'}.

\begin{proof}[Proof of \cref{thm:FI}]
Writing $v=v_\ep+(v-v_\ep)$ and applying the triangle inequality, we find that
\begin{multline}\label{eq:dcomm0}
    \Bigg|\int_{(\R^\d)^2\setminus\triangle} \k_{v}(x,y)d\Big(\frac{1}{N}\sum_{i=1}^N\delta_{x_i}-\mu\Big)^{\otimes 2}\Bigg| \le \Bigg|\int_{(\R^\d)^2\setminus\triangle} \k_{v_\ep}(x,y)d\Big(\frac{1}{N}\sum_{i=1}^N\delta_{x_i}-\mu\Big)^{\otimes 2}\Bigg| \\
    + \Bigg|\int_{(\R^\d)^2\setminus\triangle} \k_{(v-v_\ep)}(x,y)d\Big(\frac{1}{N}\sum_{i=1}^N\delta_{x_i}-\mu\Big)^{\otimes 2}\Bigg|.
\end{multline}
Depending on the value of $\s$, we apply the estimates \eqref{eq:FIsupC}, \eqref{eq:FIsubC1}, \eqref{eq:FIsubC2}, or \eqref{eq:FInonsing} from \cref{thm:FI'} to the first term on the right-hand side of \eqref{eq:dcomm0}. We use \cref{lem:mollerrs}\eqref{eq:mollerrs3} and the modulus of continuity bound \eqref{eq:BWll} to control
\begin{align}
    \|\nab v_\ep\|_{L^\infty} \le C\|v\|_{\dot{W}^{\frac{\d}{p}+1,p}}\Big(1+|\log\ep|\Big)^{1-\frac1p},
\end{align}
where $C=C(\d,p)>0$. If $v\in \dot{W}^{\frac{\as}{2},\frac{2\d}{\as-2}}$, then by commutativity of differentiation and mollification,
\begin{align}
    \|\Dm^{\frac{\as}{2}}v_\ep\|_{L^{\frac{2\d}{\as-2}}} \le \|\Dm^{\frac{\as}{2}}v\|_{L^{\frac{2\d}{\as-2}}}.
\end{align}
All together, we conclude that 
\begin{multline}\label{eq:kvepcommfin}
    \Bigg|\int_{(\R^\d)^2\setminus\triangle} \k_{v_\ep}(x,y)d\Big(\frac{1}{N}\sum_{i=1}^N\delta_{x_i}-\mu\Big)^{\otimes 2}\Bigg|  \\
    \le     C\|v\|_{\dot{W}^{\frac{\d}{p}+1,p}}(1+|\log\ep|)^{1-\frac1p}\Big(\Fr_N(\XN,\mu)  + C_q\|\mu\|_{L^q}\la^{\frac{\d(q-1)}{q}-\s} \Big)\indic_{\s\ge \max(0,\d-2)}\\
   + C\min\Bigg(\Big(\|v\|_{\dot{W}^{\frac{\d}{p}+1,p}}(1+|\log\ep|)^{1-\frac1p}  + \|\Dm^{\frac{\as}{2}}v\|_{L^{\frac{2\d}{\as-2}}}\indic_{\substack{\as>2}}\Big) \Big(\Fr_N(\XN,\mu)  + C_q\|\mu\|_{L^q}\la^{\frac{\d(q-1)}{q}-\s} \Big),\\
   \Big(\|v\|_{\dot{W}^{\frac{\d}{p}+1,p}}(1+|\log\ep|)^{1-\frac1p}  + \|\Dm^{\frac{\d-\s}{2}}v\|_{L^{\frac{2\d}{\d-\s-2}}}\Big)\Big(\Fr_N(\XN,\mu) + C_{\tl{q}}\|\mu\|_{L^{\tl{q}}}\ka^{\frac{\d(\tl{q}-1)}{\tl{q}}-\s}\Big) \Bigg)\indic_{0\le\s<\d-2}\\
   +C\Big(\|v\|_{\dot{W}^{\frac{\d}{p}+1,p}}(1+|\log\ep|)^{1-\frac1p}  + \|\Dm^{\frac{\d-\s}{2}}v\|_{L^{\frac{2\d}{\d-\s-2}}}\Big)\Fr_N(\XN,\mu)\indic_{-2<\s<0},
\end{multline}
for $q>\frac{\d}{\d-\s}$ and $\tl{q}>\frac{\d}{\d-\s-1}$, where $\la,\ka$ are as in \eqref{eq:ladef}, \eqref{eq:kadef}, respectively. Applying \cref{lem:molvepLL}\eqref{eq:molvepLL} to the second term on the right-hand side of \eqref{eq:dcomm0} and combining the resulting bound with \eqref{eq:kvepcommfin} then completes the proof.
\end{proof}

We now turn to the proof of \cref{lem:molvepLL}.

\begin{proof}[Proof of \cref{lem:molvepLL}]
Our starting point is the decomposition
\begin{equation}
\int_{(\R^\d)^2\setminus\triangle} \k_{(v-v_\ep)}(x,y)d\Big(\frac{1}{N}\sum_{i=1}^N\delta_{x_i}-\mu\Big)^{\otimes 2}(x,y) = \sum_{j=1}^3 \Te_j,
\end{equation}
where
\begin{align}
\Te_1 &= \frac{1}{N^2}\sum_{1\leq i\neq j\leq N} \k_{(v-v_\ep)}(x_i,x_j), \\
\Te_2 &= -2\frac1N\sum_{i=1}^N \int_{\R^\d}\k_{(v-v_\ep)}(x_i,y)d\mu(y), \\
\Te_3 &= \int_{\R^\d}\k_{(v-v_\ep)}(x,y)d\mu^{\otimes 2}(x,y).
\end{align}
We separately estimate each of $\Te_1,\Te_2,\Te_3$. 

\medskip
\noindent\textbullet \ $\Te_1$: For $1\leq i\leq N$, let $\rs_i$ be the nearest-neighbor type distance from \eqref{eq:rsi_def}. By \cref{lem:mollerrs}\eqref{eq:mollerrs1} with $a=1$, we have the elementary bound
\begin{align}\label{eq:kvvep}
   \forall x\neq y, \qquad \left|\k_{(v-v_\ep)}(x,y)\right| \leq \frac{C\|v\|_{\dot{\mathcal{C}}^1}\ep}{|x-y|^{\s+1}}.
\end{align}
We now consider cases based on the value of $\s$.

If $0<\s<\d$, then applying the bound \eqref{eq:kvvep} to each summand of $\Te_1$,
\begin{align}
    |\Te_1| \le \frac{C\|v\|_{\dot{\mathcal{C}}^1}\ep}{N^2}\sum_{i=1}^N \sum_{j\ne i} |x_i-x_j|^{-\s-1} &\le \frac{C\|v\|_{\dot{\mathcal{C}}^1}\ep}{N}\sum_{i=1}^N \rs_i^{-\s-1}\nn\\
    &\le \frac{C\|v\|_{\dot{\mathcal{C}}^1}\ep}{N}\Big(\sum_{i=1}^N\rs_i^{-\s}\Big)^{\frac{\s+1}{\s}} \nn\\
    &\le \frac{C'(\s N^2)^{\frac{\s+1}{\s}}\ep}{N}\Big(\Fr_N(\XN,\mu)  + C_p\|\mu\|_{L^p}\la^{\frac{\d(p-1)}{p}-\s}\Big)^{\frac{\s+1}{\s}}, \label{eq:T1fin}
\end{align}
where the second inequality follows from the definition of $\rs_i$, the third inequality from the embedding $\|\cdot\|_{\ell^{\s+1}} \le \|\cdot\|_{\ell^{\s}}$, and the fourth inequality from \cref{cor:MEcount}. Above, $C,C' >0$ depend only on $\d,\s$ and $\frac{\d}{\d-\s}<p\le \infty$.
If $-2<\s\le -1$, then we use $1>|\s|-1\ge 0$ to instead argue that
\begin{align}
    |\Te_1| \le \frac{C\|v\|_{\dot{\mathcal{C}}^1}\ep}{N^2}\sum_{i=1}^N \sum_{j\ne i} |x_i-x_j|^{-\s-1} &\le \frac{C\|v\|_{\dot{\mathcal{C}}^1}\ep}{N^2}\sum_{i=1}^N \sum_{j\ne i} (|x_i|^{|\s|-1} + |x_j|^{|\s|-1}) \nn\\
    &\le \frac{2C\|v\|_{\dot{\mathcal{C}}^1}\ep}{N}\sum_{i=1}^N |x_i|^{|\s|-1} \nn\\
    &\le {2C'\|v\|_{\dot{\mathcal{C}}^1}\ep}\Bigg(\int_{\R^\d}|x|^{|\s|-1}d\mu +(1+\|\mu\|_{L^1})^{1-\frac{2(|\s|-1)}{|\s|}}  \Fr_N^{\frac{|\s|-1}{|\s|}}(\XN,\mu)\Bigg), \label{eq:T1fin'}
\end{align}
where we have used $|\s|-1<\frac{|\s|}{2}$ and \cref{prop:moms} to obtain the final line.

If $-1<\s<0$, then in addition to \eqref{eq:kvvep}, we observe the bound
\begin{align}\label{eq:kvvep'}
     \forall |x-y|\le 1, \qquad \left|\k_{(v-v_\ep)}(x,y)\right| \leq C\|v\|_{\dot{\mathcal{C}}^1}|x-y|^{-\s}(1+|\log|x-y||).
\end{align}
which is a consequence of $\dot{\mathcal{C}}^1$ functions being log-Lipschitz (e.g. see \cite[Proposition 2.107]{bahouri_fourier_2011}). Interpolating between \eqref{eq:kvvep} and \eqref{eq:kvvep'}, we find that for any $\vartheta \in [0,1]$,
\begin{align}
    \forall |x-y|\le 1, \qquad \left|\k_{(v-v_\ep)}(x,y)\right| \le C\|v\|_{\dot{\mathcal{C}}^1}|x-y|^{-\s}\left(\frac{\ep}{|x-y|}\right)^{\vartheta}(1+|\log|x-y||)^{1-\vartheta}.
\end{align}
{Note that we may discard the $\log$ factor up to taking $\vartheta$ smaller in the exponent of $\ep$. Indeed, for $1\ge\vartheta>\vartheta'$, we have that
\begin{align}
    \sup_{|x-y|\le 1}|x-y|^{\vartheta-\vartheta'}(1+|\log|x-y||)^{1-\vartheta'} \le C',
\end{align} 
and therefore,
\begin{align}
     |x-y|^{-\s}\left(\frac{\ep}{|x-y|}\right)^{\vartheta'}(1+|\log|x-y||)^{1-\vartheta'} &= \ep^{\vartheta'}|x-y|^{|\s|-\vartheta} |x-y|^{\vartheta-\vartheta'}(1+|\log|x-y||)^{1-\vartheta'} \nn\\
     &\le C' \ep^{\vartheta'}|x-y|^{|\s|-\vartheta} .
\end{align}}
Hence, choosing $0<\vartheta'<\vartheta<1$ so that $r\ge|\s|-\vartheta >0$, which translates to $|\s|-r\le \vartheta<|\s|$, and arguing similarly to the previous case,
\begin{align}
    |\Te_1| &\le \frac{1}{N^2}\sum_{i=1}^N \sum_{\substack{j: |x_i-x_j|\le 1}} \left|\k_{(v-v_\ep)}(x,y)\right| + \frac{1}{N^2}\sum_{i=1}^N\sum_{\substack{j: |x_i-x_j|> 1}} \left|\k_{(v-v_\ep)}(x,y)\right| \nn\\
    &\le \frac{C\|v\|_{\dot{\mathcal{C}}^1}\ep^{{\vartheta'}}}{N}\sum_{i=1}^N |x_i|^{|\s|-\vartheta}  + C\|v\|_{\dot{\mathcal{C}}^1}\ep \nn\\
    &\le C\|v\|_{\dot{\mathcal{C}}^1}\ep^{{\vartheta'}}\Bigg(\int_{\R^\d}|x|^{|\s|-\vartheta}d\mu + (1+\|\mu\|_{L^1})^{1-\frac{2(|\s|-\vartheta)}{|\s|}}\Fr_N^{\frac{|\s|-\vartheta}{|\s|}}(\XN,\mu)\Bigg) + C\|v\|_{\dot{\mathcal{C}}^1}\ep. \label{eq:T1fin''}
\end{align}

Combining the bounds \eqref{eq:T1fin}, \eqref{eq:T1fin'}, \eqref{eq:T1fin''} for $0<\s<\d$, $-2<\s\le -1$, and $-1<\s<0$, respectively, we conclude
\begin{multline}\label{eq:T1finn}
    |\Te_1| \le \frac{C(N^2)^{\frac{\s+1}{\s}}\ep}{N}\Big(\Fr_N(\XN,\mu)  + C_p\|\mu\|_{L^p}\la^{\frac{\d(p-1)}{p}-\s}\Big)^{\frac{\s+1}{\s}}\indic_{0<\s<\d}\\
   + {C\|v\|_{\dot{\mathcal{C}}^1}\ep}\Bigg(\int_{\R^\d}|x|^{|\s|-1}d\mu + (1+\|\mu\|_{L^1})^{1-\frac{2(|\s|-1)}{|\s|}}\Fr_N^{\frac{|\s|-1}{|\s|}}(\XN,\mu)\Bigg)\indic_{-2<\s\le -1}\\
    +\Bigg(C_{\vartheta,\vartheta'}\|v\|_{\dot{\mathcal{C}}^1}\ep^{{\vartheta'}}\Bigg(\int_{\R^\d}|x|^{|\s|-\vartheta}d\mu + (1+\|\mu\|_{L^1})^{1-\frac{2(|\s|-\vartheta)}{|\s|}}\Fr_N^{\frac{|\s|-\vartheta}{|\s|}}(\XN,\mu)\Bigg) + C\|v\|_{\dot{\mathcal{C}}^1}\ep\Bigg)\indic_{-1<\s<0},
\end{multline}
where $C=C(\d,\s)>0$ and $C_{\vartheta,\vartheta'}>0$ additionally depends on $\vartheta,\vartheta'$.

\medskip
\noindent\textbullet \ $\Te_2$: Unpacking the definition of $\k_{(v-v_\ep)}$,
\begin{align}\label{eq:T20}
\int_{\R^\d}\k_{(v-v_\ep)}(\cdot,y)d\mu(y) = (v-v_\ep)\cdot\nabla h^{\mu} + \div h^{(v-v_\ep)\mu},
\end{align}
where $h^{f}\coloneqq \g\ast f$. For the first term on the right-hand side, we crudely bound
\begin{align}
\|(v-v_\ep)\cdot\nabla h^{\mu}\|_{L^\infty} &\leq \|v-v_\ep\|_{L^\infty} \|\nabla h^{\mu}\|_{L^\infty} \nn\\
&\le \|v\|_{\dot{\mathcal{C}}^1}\ep\begin{cases} C_q \|\mu\|_{L^1}^{1-\frac{(\s+1)q}{\d(q-1)}} \|\mu\|_{L^q}^{\frac{(\s+1)q}{\d(q-1)}}, & {-1\le \s<\d-1} \\ C_\theta\|\mu\|_{L^1}^{1-\frac{\s-\d+1}{\theta}} \|\mu\|_{\dot{\mathcal{C}}^{\theta}}^{\frac{\s-\d+1}{\theta}}, & {\s\ge \d-1}, \end{cases} \label{eq:T21}
\end{align}
for $q>\frac{\d}{\d-\s-1}$ ($q\ge1$ if $\s=-1$) and $\theta>\s+1-\d$, where in the final inequality, we have used \cref{lem:mollerrs}\eqref{eq:mollerrs1} and the interpolation estimate
\begin{align}\label{eq:nabhf}
    \|\nab h^f\|_{L^\infty} \le \begin{cases} C_q\|f\|_{L^1}^{1-\frac{(\s+1)q}{\d(q-1)}} \|f\|_{L^q}^{\frac{(\s+1)q}{\d(q-1)}}, & {-1\le \s<\d-1} \\ C_\theta\|f\|_{L^1}^{1-\frac{\s-\d+1}{\theta}} \|f\|_{\dot{\mathcal{C}}^{\theta}}^{\frac{\s-\d+1}{\theta}}, & {\s\ge \d-1}. \end{cases} 
\end{align}


For the second term on the right-hand side of \eqref{eq:T20}, we again use the interpolation estimate \eqref{eq:nabhf} and H\"older's inequality to obtain
\begin{align}
    &\|\div h^{(v-v_\ep)\mu}\|_{L^\infty}\nn\\
     &\le \begin{cases} C_q\|(v-v_\ep)\mu\|_{L^1}^{1-\frac{(\s+1)q}{\d(q-1)}} \|(v-v_\ep)\mu\|_{L^q}^{\frac{(\s+1)q}{\d(q-1)}} , & {-1\le \s<\d-1} \\ C_\theta\|(v-v_\ep)\mu\|_{L^1}^{1-{\frac{\s-\d+1}{\theta}}} \|(v-v_\ep)\mu\|_{\dot{\mathcal{C}}^\theta}^{\frac{\s-\d+1}{\theta}}, & {\s\ge \d-1} \end{cases} \nn\\
    &\le \begin{cases} C_q\|v-v_\ep\|_{L^\infty} \|\mu\|_{L^1}^{1-\frac{(\s+1)q}{\d(q-1)}} \|\mu\|_{L^q}^{\frac{(\s+1)q}{\d(q-1)}},  & {-1\le\s<\d-1} \\  C_\theta (\|v-v_\ep\|_{L^\infty} \|\mu\|_{L^1})^{1-\frac{\s-\d+1}{\theta}} (\|v-v_\ep\|_{L^\infty}\|\mu\|_{\dot{\mathcal{C}}^\theta} + \|v-v_\ep\|_{\dot{\mathcal{C}}^\theta} \|\mu\|_{L^\infty})^{\frac{\s-\d+1}{\theta}}, & {\s\ge \d-1} \end{cases} \nn\\
    &\le \begin{cases} C_q C \ep\|v\|_{\dot{\mathcal{C}}^1}\|\mu\|_{L^1}^{1-\frac{(\s+1)q}{\d(q-1)}} \|\mu\|_{L^q}^{\frac{(\s+1)q}{\d(q-1)}},  & {-1\le \s<\d-1} \\ C_\theta(C\ep\|v\|_{\dot{\mathcal{C}}^1} \|\mu\|_{L^1})^{1-\frac{\s-\d+1}{\theta}}(C\ep\|v\|_{\dot{\mathcal{C}}^1}\|\mu\|_{\dot{\mathcal{C}}^\theta} + C_\theta'\|v\|_{\dot{\mathcal{C}}^1}\ep^{1-\theta}\|\mu\|_{L^\infty})^{\frac{\s-\d+1}{\theta}} , & {\s\ge \d-1} \end{cases} \label{eq:T22}
\end{align}
where we have also used the algebra property of H\"older-Zygmund spaces. 


{If $-2<\s<-1$, then we instead use \eqref{eq:kvvep} to bound
\begin{align}
    \int_{\R^\d}|k_{(v-v_\ep)}(x,y)|d|\mu|(y) &\le C\ep\|v\|_{\dot{\mathcal{C}}^1}\int_{\R^\d}|x-y|^{-\s-1}d|\mu|(y) . 
\end{align}
Using \cref{prop:moms} with $x_0=y$
\begin{align}
    &\frac1N\sum_{i=1}^N \int_{\R^\d}|x_i-y|^{|\s|-1}d|\mu|(y) \nn\\
    &\le C \int_{\R^\d}\bigg(\int_{\R^\d}|x-y|^{|\s|-1}d\mu + (1+\|\mu\|_{L^1})^{1-\frac{2(|\s|-1)}{|\s|}}\Fr_N^{\frac{|\s|-1}{|\s|}}(\XN,\mu)\bigg)d|\mu|(y) \nn\\
    &\le C'\|\mu\|_{L^1}\bigg(\int_{\R^\d}|x|^{|\s|-1}d|\mu| +(1+\|\mu\|_{L^1})^{1-\frac{2(|\s|-1)}{|\s|}}\Fr_N^{\frac{|\s|-1}{|\s|}}(\XN,\mu)\bigg).
\end{align}
Hence,
\begin{multline}\label{eq:T23}
    \frac1N\sum_{i=1}^N \int_{\R^\d}|k_{(v-v_\ep)}(x_i,y)|d|\mu|(y) \\
    \le C''\ep\|v\|_{\dot{\mathcal{C}}^1}\|\mu\|_{L^1}\bigg(\int_{\R^\d}|x|^{|\s|-1}d|\mu| +(1+\|\mu\|_{L^1})^{1-\frac{2(|\s|-1)}{|\s|}}\Fr_N^{\frac{|\s|-1}{|\s|}}(\XN,\mu)\bigg).
\end{multline}
}
Combining the estimates \eqref{eq:T20}, \eqref{eq:T21}, \eqref{eq:T22}, \eqref{eq:T23}, simplifying and relabeling constants, we conclude that
\begin{align}\label{eq:T2fin}
|\Te_2|  \le \ep\|v\|_{\dot{\mathcal{C}}^1} \begin{cases} {\|\mu\|_{L^1}\Big(\int_{\R^\d}|x|^{|\s|-1}d\mu + (1+\|\mu\|_{L^1})^{\frac{2}{|\s|}}\Fr_N^{\frac{|\s|-1}{|\s|}}(\XN,\mu)\Big)}, & {-2<\s<-1} \\ C_q\|\mu\|_{L^1}^{1-\frac{(\s+1)q}{\d(q-1)}} \|\mu\|_{L^q}^{\frac{(\s+1)q}{\d(q-1)}}, & {-1\le \s<\d-1} \\ C_\theta\|\mu\|_{L^1}^{1-\frac{\s-\d+1}{\theta}} \|\mu\|_{\dot{\mathcal{C}}^\theta}^{\frac{\s-\d+1}{\theta}} + C_\theta \ep^{\d-\s-1}\|\mu\|_{L^1}^{1-\frac{\s-\d+1}{\theta}}\|\mu\|_{L^\infty}^{\frac{\s-\d+1}{\theta}}, & {\s\ge \d-1} \end{cases}
\end{align}
for $q>\frac{\d}{\d-\s-1}$ ($q\ge 1$ if $\s=-1$) and $\theta>\s+1-\d$. 

\medskip
\noindent\textbullet \ $\Te_3$: Unpacking the definition of $\k_{(v-v_\ep)}$ and desymmetrizing,
\begin{equation}
\Te_3 = \int_{\R^\d} (v-v_\ep)\cdot\nabla h^{\mu} d\mu,
\end{equation}
and by H\"older's inequality,
\begin{align}
\left|\int_{\R^\d} (v-v_\ep)\cdot\nabla h^{\mu} d\mu\right| &\leq \|v-v_\ep\|_{L^\infty} \|\nabla h^{\mu}\|_{L^\infty} \|\mu\|_{L^1}.
\end{align}
Using the estimate \eqref{eq:T21}, we conclude that
\begin{align}\label{eq:T31}
|\Te_3| \le  \ep\|v\|_{\dot{\mathcal{C}}^1}\|\mu\|_{L^1}\begin{cases}C_q \|\mu\|_{L^1}^{1-\frac{(\s+1)q}{\d(q-1)}} \|\mu\|_{L^q}^{\frac{(\s+1)q}{\d(q-1)}}, & {-1\le \s<\d-1} \\ C_\theta\|\mu\|_{L^1}^{1-\frac{\s-\d+1}{\theta}} \|\mu\|_{\dot{\mathcal{C}}^{\theta}}^{\frac{\s-\d+1}{\theta}}, & {\s\ge \d-1}, \end{cases} 
\end{align}
for $q>\frac{\d}{\d-\s-1}$ and $\theta>\s+1-\d$.

If $-2<\s<-1$, then we instead argue similarly to $\Te_2$ to obtain
\begin{align}\label{eq:T32}
    |\Te_3| \le C\ep\|v\|_{\dot{\mathcal{C}}^1}\|\mu\|_{L^1}\int_{\R^\d}|x|^{|\s|-1}d\mu.
\end{align}

Combining the estimates \eqref{eq:T31}, \eqref{eq:T32}, we conclude that
\begin{align}\label{eq:T3fin}
    |\Te_3| \le \ep\|v\|_{\dot{\mathcal{C}}^1}\|\mu\|_{L^1}\begin{cases} \int_{\R^\d}|x|^{|\s|-1}d\mu, & {-2<-\s<-1} \\ C_q \|\mu\|_{L^1}^{1-\frac{(\s+1)q}{\d(q-1)}} \|\mu\|_{L^q}^{\frac{(\s+1)q}{\d(q-1)}}, & {-1\le \s<\d-1} \\ C_\theta\|\mu\|_{L^1}^{1-\frac{\s-\d+1}{\theta}} \|\mu\|_{\dot{\mathcal{C}}^{\theta}}^{\frac{\s-\d+1}{\theta}}, & {\s\ge \d-1},\end{cases}
\end{align}
for $q>\frac{\d}{\d-\s-1}$ and $\theta>\s+1-\d$.

\medskip
\noindent\textbullet \ Conclusion of proof: Combining the estimates \eqref{eq:T1finn}, \eqref{eq:T2fin}, \eqref{eq:T3fin} for $\Te_1, \Te_2,\Te_3$, respectively, we arrive at
\begin{multline}
\Bigg|\int_{(\R^\d)^2\setminus\triangle} \k_{(v-v_\ep)}(x,y)d\Big(\frac{1}{N}\sum_{i=1}^N\delta_{x_i}-\mu\Big)^{\otimes 2}\Bigg| \\
\leq  \frac{C(N^2)^{\frac{\s+1}{\s}}\|v\|_{\dot{\mathcal{C}}^1}\ep}{N}\Big(\Fr_N(\XN,\mu)  + C_p\|\mu\|_{L^p}\la^{\frac{\d(p-1)}{p}-\s}\Big)^{\frac{\s+1}{\s}}\indic_{0<\s<\d} \\
+   {C\|v\|_{\dot{\mathcal{C}}^1}\ep}(1+\|\mu\|_{L^1})^{\frac{2}{|\s|}}\Bigg(\int_{\R^\d}|x|^{|\s|-1}d\mu +  \Fr_N^{\frac{|\s|-1}{|\s|}}(\XN,\mu)\Bigg)\indic_{-2<\s\le -1}\\
    +C_{\vartheta,\vartheta'}\|v\|_{\dot{\mathcal{C}}^1}\Bigg(\ep^{{\vartheta'}}\bigg(\int_{\R^\d}|x|^{|\s|-\vartheta}d\mu + (1+\|\mu\|_{L^1})^{\frac{2\vartheta}{|\s|}}\Fr_N^{\frac{|\s|-\vartheta}{|\s|}}(\XN,\mu)\bigg) + \ep\Bigg)\indic_{-1<\s<0}\\
+ \ep\|\mu\|_{L^1}\|v\|_{\dot{\mathcal{C}}^1} \begin{cases} {C\Big(\int_{\R^\d}|x|^{|\s|-1}d\mu + (1+\|\mu\|_{L^1})^{\frac{2}{|\s|}}\Fr_N^{\frac{|\s|-1}{|\s|}}(\XN,\mu)\Big)}, & {-2<\s\le -1} \\ C_q\|\mu\|_{L^1}^{1-\frac{(\s+1)q}{\d(q-1)}} \|\mu\|_{L^q}^{\frac{(\s+1)q}{\d(q-1)}}, & {-1\le \s<\d-1} \\ C_\theta\|\mu\|_{L^1}^{1-\frac{\s-\d+1}{\theta}} \|\mu\|_{\dot{\mathcal{C}}^\theta}^{\frac{\s-\d+1}{\theta}} + C_\theta \ep^{\d-\s-1}\|\mu\|_{L^1}^{1-\frac{\s-\d+1}{\theta}}\|\mu\|_{L^\infty}^{\frac{\s-\d+1}{\theta}}, & {\s\ge \d-1} \end{cases},
\end{multline}
for $\vartheta \in [|\s|-r,|\s|)$, {$0<\vartheta'<\vartheta$}, $p>\frac{\d}{\d-\s}$, $q>\frac{\d}{\d-\s-1}$, and $\theta>\s+1-\d$. The constant $C$ depends only on $\d,\s$, while the constants $C_{\vartheta,\vartheta'}, C_p,C_q,C_\theta$ additionally depend on $(\vartheta,\vartheta'), p,q,\theta$, respectively. Comparing this inequality to the statement of the lemma, we see that the proof is complete.
\end{proof}

\subsection{Mean-field convergence}\label{ssec:BWmp}
Using the defective commutator estimate of \cref{thm:FI}, we now show convergence of the empirical measure for the mean-field particle dynamics \eqref{eq:MFode} to a (necessarily unique) solution of the limiting PDE \eqref{eq:MFlim} in the modulated energy distance. This proves \cref{thm:MF}. In contrast to previous works  \cite[Theorem 1.5]{rosenzweig_sharp_nodate},  \cite[Theorem 1.5]{hess-childs_sharp_2025}, the rate of convergence is not the optimal $O(N^{\frac{\s}{\d}-1})$ rate. This is not surprising giving we have to pay a price for relaxing the assumption that the mean-field vector field is Lipschitz.

To close a differential inequality for $\Fr_N(\XN^t,\mu^t)$, which is the key step of the theorem, we will use the following generalization of the Gr\"onwall-Bellman inequality.

\begin{lemma}\label{lem:Gron}
    Let $a\ge 0$. Suppose that $C_1,C_2:[0,T]\rightarrow [0,\infty)$ are locally integrable, and $x:[0,T]\rightarrow [0,\infty)$ is absolutely continuous with
    \begin{align}
        \dot{x}^t \le C_1^t (x^t)^a + C_2^tx^t, \qquad \text{a.e. on} \ [0,T].
    \end{align}
    
    If $a>1$, set $I^t \coloneqq (a-1)\int_0^t C_1^s e^{(a-1)\int_0^s C_2^\tau d\tau}ds$. If $T_*$ is defined as the maximal time such that $I^{T_*} \le (x^0)^{1-a}$, then
    \begin{align}\label{eq:Grona>1}
        \forall t < T_*, \qquad x^t \le e^{\int_0^t C_2^s ds}\Big((x^0)^{1-a} - I^t \Big)^{-\frac{1}{a-1}}.
    \end{align}

    If $a\le 1$, then
    \begin{align}\label{eq:Grona<1}
        \forall t\le T, \qquad (x^t)^{1-a} \le (x^0)^{1-a}e^{(1-a)\int_0^t C_2^\tau d\tau} + (1-a)\int_0^t C_1^s e^{(1-a)\int_{s}^t C_2^\tau d\tau} ds.
    \end{align}
\end{lemma}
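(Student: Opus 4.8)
The plan is to prove \cref{lem:Gron}, a Gr\"onwall--Bellman-type inequality, by a standard integrating-factor argument, treating the two cases $a>1$ and $a\le 1$ separately.

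\textbf{Case $a\le 1$.} Here I would first reduce to the case $a<1$; the case $a=1$ is the classical Gr\"onwall--Bellman lemma and can be obtained by a limiting argument or treated directly. For $a<1$, the idea is to substitute $y^t \coloneqq (x^t)^{1-a}$. Since $x$ is absolutely continuous and nonnegative, $y$ is absolutely continuous on any subinterval where $x$ is bounded away from $0$; one handles the possible vanishing of $x^t$ by the usual trick of working with $(x^t+\delta)^{1-a}$ for $\delta>0$ and letting $\delta\to 0^+$ at the end, using that $t\mapsto (x^t)^{1-a}$ is still absolutely continuous because $z\mapsto z^{1-a}$ is H\"older and $x$ has bounded variation on $[0,T]$ (being absolutely continuous). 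Differentiating, $\dot{y}^t = (1-a)(x^t)^{-a}\dot{x}^t \le (1-a)(C_1^t + C_2^t (x^t)^{1-a}) = (1-a)C_1^t + (1-a)C_2^t y^t$, wherever $\dot x^t$ exists and $x^t>0$. This is now a linear differential inequality in $y$, so multiplying by the integrating factor $e^{-(1-a)\int_0^t C_2^\tau d\tau}$ and integrating from $0$ to $t$ gives exactly \eqref{eq:Grona<1}. The only subtlety is the set where $x^t=0$: there the inequality $\dot{x}^t\le C_1^t(x^t)^a + C_2^t x^t = C_1^t\cdot 0 = 0$ (for $a>0$; for $a=0$ one absorbs the constant into $C_1$) combined with $x^t\ge 0$ forces $x$ to remain controlled, and the $(x+\delta)^{1-a}$ regularization circumvents any division-by-zero issue cleanly.

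\textbf{Case $a>1$.} Now I would substitute $y^t \coloneqq (x^t)^{1-a}$, noting $1-a<0$, so this is only defined where $x^t>0$. If $x^0=0$, the bound \eqref{eq:Grona>1} is vacuous since then $T_*=0$ (as $I^{T_*}\le (x^0)^{1-a}=+\infty$ always holds, but the statement is understood on $[0,T_*)$ with $T_*$ maximal — more carefully, if $x^0=0$ one argues $x\equiv 0$ or treats $(x^0)^{1-a}=+\infty$ and $T_*=T$, and the right-hand side is $+\infty$, so there is nothing to prove). Assume $x^0>0$. Where $x^t>0$, we compute $\dot{y}^t = (1-a)(x^t)^{-a}\dot{x}^t \ge (1-a)(C_1^t(x^t)^a + C_2^t x^t)(x^t)^{-a} = (1-a)C_1^t + (1-a)C_2^t y^t$, where the inequality direction flips because $1-a<0$. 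So $\dot{y}^t - (1-a)C_2^t y^t \ge (1-a)C_1^t$. Multiplying by the (positive) integrating factor $e^{-(1-a)\int_0^t C_2^\tau d\tau} = e^{(a-1)\int_0^t C_2^\tau d\tau}$ and integrating from $0$ to $t$ gives
\begin{align*}
 y^t e^{(a-1)\int_0^t C_2^\tau d\tau} - y^0 \ge (1-a)\int_0^t C_1^s e^{(a-1)\int_0^s C_2^\tau d\tau}ds = -I^t,
\end{align*}
i.e. $y^t \ge e^{-(a-1)\int_0^t C_2^\tau d\tau}(y^0 - I^t) = e^{-(a-1)\int_0^t C_2^\tau d\tau}((x^0)^{1-a} - I^t)$. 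As long as $t<T_*$, the quantity $(x^0)^{1-a} - I^t$ is strictly positive, so this is a genuine lower bound on $y^t=(x^t)^{1-a}>0$; raising both sides to the power $-\frac{1}{a-1}<0$ (which reverses the inequality) yields \eqref{eq:Grona>1}. One should also record that $x^t$ cannot reach $0$ on $[0,T_*)$ — indeed a standard continuity/bootstrap argument shows that if $x$ were to vanish at some first time $t_0<T_*$, then $y^t\to+\infty$ as $t\uparrow t_0$, contradicting the finite lower bound just derived on $[0,t_0)$; hence the substitution is valid throughout $[0,T_*)$.

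\textbf{Main obstacle.} The only real technical point — and the part I would be most careful to write correctly — is the justification of the chain rule for $t\mapsto (x^t)^{1-a}$ when $x^t$ may touch $0$ (case $a\le 1$) or the a priori verification that $x^t>0$ on the whole interval $[0,T_*)$ (case $a>1$). Both are handled by soft arguments (the $\delta$-regularization in the first case, a continuity/bootstrap argument in the second), but they must be stated to make the integrating-factor computation rigorous. Everything else is a direct and routine computation with the linear differential inequality.
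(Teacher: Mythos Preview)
Your approach is essentially the same as the paper's: the substitution $y^t=(x^t)^{1-a}$ combined with an integrating factor (the paper does these two operations in the opposite order for $a>1$, first setting $z^t=e^{-\int_0^t C_2}x^t$ and then $w^t=(z^t)^{1-a}$, but this is immaterial). One small slip: your bootstrap argument that $x^t$ cannot vanish on $[0,T_*)$ is not valid as written---$y^t\to+\infty$ does not contradict a \emph{lower} bound on $y^t$---but this step is unnecessary anyway, since the desired inequality \eqref{eq:Grona>1} holds trivially at points where $x^t=0$, or alternatively you can simply reuse the $\delta$-regularization you already introduced for the $a\le 1$ case (the paper's proof does not address this issue at all).
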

\begin{proof}
    For the assertion \eqref{eq:Grona>1}, set $z^t \coloneqq e^{-\int_0^t C_2^sds}x^t$, which satisfies
    \begin{align}
        \dot{z}^t \le e^{-\int_0^t C_2^sds}C_1^t (x^t)^a = C_1^t e^{(a-1)\int_0^tC_2^sds}(z^t)^a.
    \end{align}
    Now let $w^t \coloneqq (z^t)^{1-a}$, which is well-defined by our assumption $x^t\ge 0$. Since $1-a<0$,
    \begin{align}
        \dot{w}^t = (1-a)(z^t)^{-a}\dot{z}^t\ge (1-a)C_1^te^{(a-1)\int_0^t C_2^sds}.
    \end{align}
    Integrating both sides from $0$ to $t$ and then rearranging completes the proof.

    For the assertion \eqref{eq:Grona<1}, set $y^t \coloneqq (z^t)^{1-a}$, which satisfies the differential inequality
    \begin{align}
        \dot{y}^t \le (1-a)C_1^t + (1-a)C_2^t y^t.
    \end{align}
    Multiplying both sides of the preceding inequality by $e^{-(1-a)\int_0^t C_2^sds}$, rearranging, then integrating yields the desired conclusion.
\end{proof}

With this lemma in hand, we turn to the proof of \cref{thm:MF}.

\begin{proof}[Proof of \cref{thm:MF}]
The second assertion concerning convergence in $H^{-r}$ for $r>\frac\d2$ follows immediately from the first assertion and \cref{prop:coer}. Therefore, we concentrate on the modulated energy.

Recall from \cite[Lemma 2.1]{serfaty_mean_2020} or \cite[Lemma 3.6]{rosenzweig_relative_2024} that $\Fr_N(\ux_N^t,\mu^t)$ satisfies the differential inequality
\begin{align}
\frac{d}{dt}\Fr_N(\ux_N^t,\mu^t) \leq \int_{(\R^\d)^2\setminus\triangle}\nabla\g(x-y)\cdot\pa*{u^t(x)-u^t(y)}d\Big(\frac{1}{N}\sum_{i=1}^N\delta_{x_i^t}-\mu^t\Big)^{\otimes2},
\end{align}
where $u^t\coloneqq {\mathsf{V}^t}-\M\nabla\g\ast\mu^t$. To make the presentation more digestible, let us break the argument into cases based on whether $\s$ is (super-)Coulomb, sub-Coulomb, or nonsingular.

\medskip

\textbullet ($\s\ge \d-2$) \ Note that $\frac{\d}{p}+\s+2-\d\ge \frac{\d}{p}$, with strict inequality if $\s>\d-2$, and so by Sobolev embedding,
\begin{align}
    \|\mu^t\|_{L^q} &\le C\|\mu^t\|_{\dot{W}^{\frac{\d}{p}+\s+2-\d,p}}^{\frac{\d(q-1)}{q(\s+2)}}, \qquad (q,\s)\ne (\infty,\d-2)\\
    \|u^t\|_{\dot{W}^{\frac{\d}{p}+1.p}} &\le {\|\mathsf{V}^t\|_{\dot{W}^{\frac{\d}{p}+1,p}} +} C|\M|\|\mu^t\|_{\dot{W}^{\frac{\d}{p}+\s+2-\d,p}},\\
    \|\mu^t\|_{\dot{\mathcal{C}}^{\s+2-\d}} &\le C\|\mu^t\||_{\dot{W}^{\frac{\d}{p}+\s+2-\d,p}}.
\end{align}
For a.e. $t$, we apply \cref{thm:FI} with $v=u^t$, $q<\infty$ if $\s=\d-2$ and $q=\infty$ if $\s\ge \d-2$, and $\theta = \s+2-\d$ if $\s\ge\d-1$, to obtain
\begin{multline}\label{eq:MFsupC0}
\frac{d}{dt}\Fr_N(\ux_N^t,\mu^t) \le     C_p\|u^t\|_{\dot{W}^{\frac{\d}{p}+1,p}}(1+|\log\ep_t|)^{1-\frac1p}\Big(\Fr_N(\XN^t,\mu^t)  + C_q\|\mu^t\|_{L^q}\la_t^{\frac{\d(q-1)}{q}-\s} \Big)\\
  + {CN^{\frac{2(\s+1)}{\s}-1}\ep_t\|u^t\|_{\dot{\mathcal{C}}^1}}\Big(\Fr_N(\XN^t,\mu^t) + C_q\|\mu^t\|_{L^q}\la_t^{\frac{\d(q-1)}{q}-\s}\Big)^{\frac{\s+1}{\s}} \\
+ \ep_t\|u^t\|_{\dot{\mathcal{C}}^1}\Big(C_q\|\mu^t\|_{L^q}^{\frac{(\s+1)q}{\d(q-1)}}\indic_{\s<\d-1} + C_\theta(\|\mu^t\|_{\dot{\mathcal{C}}^\theta}^{\frac{\s-\d+1}{\theta}} + \ep_t^{\d-\s-1}\|\mu^t\|_{L^\infty}^{\frac{\s-\d+1}{\theta}}) \indic_{\s\ge\d-1}\Big),
\end{multline}
where $\ep_t$ is to be chosen momentarily and $\la_t$ corresponds to \eqref{eq:ladef} with $\mu=\mu^t$. Letting
\begin{multline}
    \mathscr{E}^t \coloneqq \Fr_N(\XN^t,\mu^t)  + \sup_{t\le \tau\le T}\ep_\tau\Big(C_q\|\mu^\tau\|_{L^q}\la_\tau^{\frac{\d(q-1)}{q}-\s} + C_q\|\mu^\tau\|_{L^q}^{\frac{(\s+1)q}{\d(q-1)}}\indic_{\s<\d-1}\\
    + C_\theta(\|\mu^\tau\|_{\dot{\mathcal{C}}^\theta}^{\frac{\s-\d+1}{\theta}} + \ep_\tau^{\d-\s-1}\|\mu^\tau\|_{L^\infty}^{\frac{\s-\d+1}{\theta}}) \indic_{\s\ge\d-1}\Big),
\end{multline}
it follows from the inequality \eqref{eq:MFsupC0} that
\begin{align}\label{eq:dtEscr}
    \frac{d}{dt}\mathscr{E}^t  \le C_1^t(\mathscr{E}^t)^{\frac{\s+1}{\s}} + C_2^t\mathscr{E}^t,
\end{align}
where
\begin{align}
    C_1^t &\coloneqq {CN^{\frac{2(\s+1)}{\s}-1}\ep_t\|u^t\|_{\dot{\mathcal{C}}^1}}, \label{eq:C1tdef}\\
    C_2^t &\coloneqq C_p\|u^t\|_{\dot{W}^{\frac{\d}{p}+1,p}}(1+|\log\ep_t|)^{1-\frac1p}. \label{eq:C2tdef}
\end{align}
Appealing to \cref{lem:Gron}\eqref{eq:Grona>1} with $a=\frac{\s+1}{\s}$, we obtain
\begin{multline}\label{eq:MFsupC1}
    \forall 0\le t < T_*, \qquad \mathscr{E}^t \le \exp\Big(\int_0^{t} C_p\|u^{t'}\|_{\dot{W}^{\frac{\d}{p}+1,p}}(1+|\log\ep_{t'}|)^{1-\frac1p}dt'\Big)\\
    \times\Bigg( (\mathscr{E}^0)^{-\frac{1}{\s}} - \frac1\s\int_0^t N^{\frac{2(\s+1)}{\s}-1}\ep_{t'}\|u^{t'}\|_{\dot{\mathcal{C}}^1}e^{\frac{1}{\s}\int_0^{t'}C_p\|u^{t''}\|_{\dot{W}^{1+\frac1p,p}}(1+|\log\ep_{t''}|)^{1-\frac1p}dt''}dt' \Bigg)^{-\s}.
\end{multline}

We have quite a bit of freedom to choose $\ep_t$ to obtain a satisfactory estimate. For example, choose $\ep=\ep_t$ (i.e. time-independent) to satisfy the implicit equation $\ep = \delta N^{1-\frac{2(\s+1)}{\s}-\al}(\mathscr{E}^0)^{-\frac{1}{\s}}$ for $\delta,\al>0$ fixed. This is possible to do because unpacking the definition of $\mathscr{E}^0$, the implicit equation for $\ep$ may be rewritten as
\begin{multline}
    \ep^{\s}\bigg(\Fr_N(\XN^0,\mu^0)+\ep\sup_{0\le \tau\le T} \Big(C_q\|\mu^\tau\|_{L^q}\la_\tau^{\frac{\d(q-1)}{q}-\s} + C_q\|\mu^\tau\|_{L^q}^{\frac{(\s+1)q}{\d(q-1)}}\indic_{\s<\d-1}\\
    + C_\theta(\|\mu^\tau\|_{\dot{\mathcal{C}}^\theta}^{\frac{\s-\d+1}{\theta}} + \ep^{\d-\s-1}\|\mu^\tau\|_{L^\infty}^{\frac{\s-\d+1}{\theta}}) \indic_{\s\ge\d-1}\Big)\bigg) = (\delta N^{1-\frac{2(\s+1)}{\s}-\al})^{\s}.
\end{multline}
The left-hand side, which is a continuous function of $\ep$, tends to zero as $\ep\rightarrow 0$ and to $\infty$ as $\ep\rightarrow\infty$. So, a solution exists by the intermediate value theorem.

With this choice for $\ep_t$,
\begin{align}
     &\frac1\s\int_0^t N^{\frac{2(\s+1)}{\s}-1}\ep\|u^{t'}\|_{\dot{\mathcal{C}}^1}\exp\Big({\frac{1}{\s}\int_0^{t'}C_p\|u^{t''}\|_{\dot{W}^{1+\frac1p,p}}(1+|\log\ep|)^{1-\frac1p}dt''}\Big)dt' \nn\\
     &\le \frac{\delta N^{-\al} (\mathscr{E}^0)^{-\frac1\s}}{\s}\int_0^T \|u^{t'}\|_{\dot{\mathcal{C}}^1}\exp\Big(\frac{1}{\s}\int_0^{T}C_p\|u^{t''}\|_{\dot{W}^{1+\frac1p,p}}(1+|\log(N^{1-\frac{2(\s+1)}{\s}-\al}\delta)| + |\frac1\s\log\mathscr{E}^0|)^{1-\frac1p}\Big).
\end{align}
As $1-\frac1p < 1$, and for any constant $C>0$, $re^{C(1+|\log r|)^{1-\frac{1}{p}}} \rightarrow 0$ as $r\rightarrow 0^+$, we see that we can choose $\delta>0$ sufficiently small depending only on $\d,\s,p,\al,\int_0^T \|u^{t''}\|_{\dot{W}^{1+\frac{\d}{p},p}}dt'', |\Fr_N(\XN^0,\mu^0)|$ such that for every $0\le t\le T$,
\begin{align}
    (\mathscr{E}^0)^{-\frac{1}{\s}} - \frac1\s\int_0^t N^{\frac{2(\s+1)}{\s}-1}\ep\|u^{t'}\|_{\dot{\mathcal{C}}^1}e^{\frac{1}{\s}\int_0^{t'}C_p\|u^{t''}\|_{\dot{W}^{1+\frac1p,p}}(1+|\log\ep|)^{1-\frac1p}dt''}dt'  \ge \frac12(\mathscr{E}^0)^{-\frac{1}{\s}},
\end{align}
which allows us to upgrade the bound \eqref{eq:MFsupC1} to
\begin{multline}
    \forall 0\le t\le T, \qquad  \mathscr{E}^t \le 2^{\s}\mathscr{E}^0\exp\Big(\int_0^{t} C_p\|u^{t'}\|_{\dot{W}^{\frac{\d}{p}+1,p}}\Big(1+|\log(\delta N^{1-\frac{2(\s+1)}{\s}-\al}(\mathscr{E}^0)^{-\frac1\s})|\Big)^{1-\frac1p}dt'\Big).
\end{multline}
Evidently, the preceding right-hand side vanishes as $N\rightarrow\infty$ provided that for any $C>0$, $|\Fr_N(\XN^0,\mu^0)|e^{C|\log N|^{1-\frac1p}}\rightarrow 0$ as $N\rightarrow\infty$ (e.g. $\Fr(\XN^0,\mu^0) = O(N^{-r})$ for some $r>0$). This completes the argument in the (super-)Coulomb case.

\medskip
\textbullet ($0<\s<\d-2$) \ We divide further into two cases based on the value of $p$ in the assumption that $\mu^t \in \dot{W}^{\frac{\d}{p}+\s+2-\d,p}$.

If $p<\frac{2\d}{\d-2}$, then we may choose an exponent $r$ satisfying $\min(2,p)\le r<\frac{2\d}{\d-2}$ so that letting $\as=2(\frac{\d}{r}+1)$, we have $\as \in (\d,\d+2)$ and
\begin{align}
    \|\Dm^{\frac{\as}{2}}u^t\|_{L^{\frac{2\d}{\as-2}}} &\le {\|\Dm^{\frac\as2}\mathsf{V}^t\|_{L^{\frac{2\d}{\as-2}}} +} C|\M|\|\Dm^{\frac{\as}{2}+\s+1-\d}\mu^t\|_{L^{\frac{2\d}{\as-2}}}\nn\\
    &={\|\Dm^{\frac\as2}\mathsf{V}^t\|_{L^{\frac{2\d}{\as-2}}} +}  C|\M|\|\Dm^{\frac{\d}{r}+\s+2-\d}\mu^t\|_{L^{r}} \nn\\
    &\le{\|\Dm^{\frac\as2}\mathsf{V}^t\|_{L^{\frac{2\d}{\as-2}}} +} C'|\M|\|\Dm^{\frac{\d}{p}+\s+2-\d}\mu^t\|_{L^{p}},
\end{align}
where the final inequality is by Sobolev embedding. Recycling notation, let
\begin{align}
    \mathscr{E}^t \coloneqq \Fr_N(\XN^t,\mu^t)  + \sup_{t\le \tau\le T}\ep_\tau\Big(C_q\|\mu^\tau\|_{L^q}\la_\tau^{\frac{\d(q-1)}{q}-\s} + C_q\|\mu^\tau\|_{L^q}^{\frac{(\s+1)q}{\d(q-1)}}\Big).
\end{align}
Applying \cref{thm:FI}, we have that \eqref{eq:dtEscr} holds with $C_1^t, C_2^t$ as in \eqref{eq:C1tdef}, \eqref{eq:C2tdef}, respectively. Repeating the argument from the $\d-2\le \s<\d$ case then completes the proof.

If $p\ge \frac{2\d}{\d-2}$, then we cannot necessarily control the term $\|\Dm^{\frac{\as}{2}}u^t\|_{L^{\frac{2\d}{\as-2}}}$, for $\as\in (\d,\d+2),$ by $\|\mu^t\|_{\dot{W}^{\frac{\d}{p}+\s+2-\d,p}}$ and instead have to use the suboptimal estimate only requiring $\|\Dm^{\frac{\d-\s}{2}}u^t\|_{L^{\frac{2\d}{\d-\s-2}}}$. Indeed, if $r\le \frac{2\d}{\d-\s-2}$, then by Sobolev embedding,
\begin{align}
    \|\Dm^{\frac{\d-\s}{2}}u^t\|_{L^{\frac{2\d}{\d-\s-2}}} &\le {\|\Dm^{\frac{\d-\s}{2}}\mathsf{V}^t\|_{L^{\frac{2\d}{\d-\s-2}}}+} C\|\Dm^{\frac{\d}{r}+1}\M\nab\g\ast\mu^t\|_{L^{r}} \nn\\
    &\le{\|\Dm^{\frac{\d-\s}{2}}\mathsf{V}^t\|_{L^{\frac{2\d}{\d-\s-2}}}+}  C'|\M|\|\mu^t\|_{\dot{W}^{\frac{\d}r + \s+2-\d,r}}\nn\\
    &\le {\|\Dm^{\frac{\d-\s}{2}}\mathsf{V}^t\|_{L^{\frac{2\d}{\d-\s-2}}}+} C'|\M|\|\mu^t\|_{L^{\frac{\d}{\d-\s-2}}}.
\end{align}
Since $\frac{2\d}{\d-2}\le p\le \frac{2\d}{\d-\s-2}$, we are done. 
Redefining
\begin{align}
    \mathscr{E}^t \coloneqq \Fr_N(\XN^t,\mu^t)  + \sup_{t\le \tau\le T}\ep_\tau C_q\Big(\|\mu^\tau\|_{L^{{q}}}\ka_\tau^{\frac{\d({q}-1)}{{q}}-\s} + \|\mu^\tau\|_{L^q}^{\frac{(\s+1)q}{\d(q-1)}}\Big),
\end{align}
the inequality \eqref{eq:dtEscr} holds with $C_1^t, C_2^t$ as before, and repeating the same reasoning completes the proof.

\medskip
\textbullet ($-2<\s<0$) \ We divide into two cases based on the value of $\s$. 

If $-2<\s\le -1$, then we redefine
\begin{align}
    \mathscr{E}^t \coloneqq \Fr_N(\XN^t,\mu^t) + \sup_{t\le \tau\le T} C\ep_\tau\int_{\R^\d}|x|^{|\s|-1}d\mu^\tau,
\end{align}
and we see from \cref{thm:FI} that $\mathscr{E}^t$ obeys the differential inequality,
\begin{align}
    \frac{d}{dt}\mathscr{E}^t \le C_1^t(\mathscr{E}^t)^{\frac{|\s|-1}{|\s|}} +  C_2^t\mathscr{E}^t,
\end{align}
where $C_2^t$ is as above, but now $C_1^t\coloneqq C\|u^t\|_{\dot{\mathcal{C}}^1}\ep_t$. Applying \cref{lem:Gron}\eqref{eq:Grona<1} with $a=\frac{|\s|-1}{|\s|}$, we obtain
\begin{align}\label{eq:Etnonsing1}
    \forall t\le T,\qquad (\mathscr{E}^t)^{\frac{1}{|\s|}} \le (\mathscr{E}^0)^{\frac{1}{|\s|}}\exp\Big({\frac{1}{|\s|}\int_0^t C_2^\tau d\tau}\Big) + \frac{1}{|\s|}\int_0^t C_1^{t'} \exp\Big({\frac{1}{|\s|}\int_{t'}^t C_2^\tau d\tau}\Big) dt'.
\end{align}
For any $\alpha>0$, we can choose $\ep_\tau = N^{-\al}$, and it follows that the preceding right-hand side tends to zero as $N\rightarrow\infty$ provided that for some $A>0$, $|\Fr_N(\XN^0,\mu^0)|^{\frac{1}{|\s|}}e^{A(1+\log N)^{1-\frac1p}}\rightarrow 0$ as $N\rightarrow\infty$.


 
If $-1<\s<0$, then we instead let
\begin{align}
    \mathscr{E}^t \coloneqq \Fr_N(\XN^t,\mu^t) + \sup_{t\le \tau\le T} \Big(C_{\vartheta,\vartheta'}\ep_\tau^{{\vartheta'}}\int_{\R^\d}|x|^{|\s|-\vartheta}d\mu^\tau + C\ep_\tau(1+C_q\|\mu^\tau\|_{L^q}^{\frac{(\s+1)q}{\d(q-1)}})\Big),
\end{align}
and we see from \cref{thm:FI} that $\mathscr{E}^t$ obeys the differential inequality,
\begin{align}
    \frac{d}{dt}\mathscr{E}^t \le C_1^t(\mathscr{E}^t)^{\frac{|\s|-\vartheta}{|\s|}} +  C_2^t\mathscr{E}^t,
\end{align}
where $C_2^t$ is as above, but now $C_1^t \coloneqq C_\vartheta\|u^t\|_{\dot{\mathcal{C}}^1}\ep_t^{\vartheta}$. Applying \cref{lem:Gron}\eqref{eq:Grona<1} with $a=\frac{|\s|-\vartheta}{|\s|}$, we find that
\begin{align}\label{eq:Etnonsing2}
    \forall t\le T,\qquad (\mathscr{E}^t)^{\frac{\vartheta}{|\s|}} \le (\mathscr{E}^0)^{\frac{\vartheta}{|\s|}}\exp\Big({\frac{\vartheta}{|\s|}\int_0^t C_2^\tau d\tau}\Big) + \frac{\vartheta}{|\s|}\int_0^t C_1^{t'} \exp\Big({\frac{\vartheta}{|\s|}\int_{t'}^t C_2^\tau d\tau}\Big) dt'.
\end{align}
Choosing $\ep_\tau = N^{-\al}$ then concludes the argument. 
With this last case, the proof of the theorem is complete.
\end{proof}


\section{Remaining questions and open problems}\label{sec:RQ}
We conclude the paper by discussing a few questions that have been so far unaddressed and posing several open problems concerning transport-commutator functional inequalities, the regularity of the transport, and the regularity of the mean-field density. We hope that this discussion will inspire future research on the subject.

\subsection{Optimal sub-Coulomb transport regularity}\label{ssec:RQsubC}
As we noted in \cref{sec:intro}, there is an unfortunate gap between the two sub-Coulomb commutator estimates \eqref{eq:FIsubC1}, \eqref{eq:FIsubC2} in \cref{thm:FI'}. In the latter, the rate in $N$ is suboptimal, but the transport regularity $\|\Dm^{\frac{\d-\s}{2}}v\|_{L^{\frac{2\d}{\d-\s-2}}}$ is expected to be optimal in light of \cref{cor:CEF2}\ref{item:CEF21}. While in the former, the rate in $N$ is optimal, but the transport regularity $\|\Dm^{\frac{\as}{2}}v\|_{L^{\frac{2\d}{\as-2}}}\indic_{\as>2}$ for $\as\in (\d,\d+2)$, although scaling the same way as $\|\Dm^{\frac{\d-\s}{2}}v\|_{L^{\frac{2\d}{\d-\s-2}}}$, is stronger by Sobolev embedding and we believe suboptimal.

The first problem we pose is to show a commutator estimate which has both the sharp error $N^{\frac\s\d-1}$ and the believed sharp transport regularity $\|\Dm^{\frac{\d-\s}{2}}v\|_{L^{\frac{2\d}{\d-\s-2}}}$ when $0\le \s<\d-2$. The dependence $\|\Dm^{\frac{\mathsf{a}}{2}}v\|_{L^{\frac{2\d}{\mathsf{a}-2}}}$, for any $\mathsf{a}\in (\d,\d+2)$, is an artifact of our passing through an intermediate commutator estimate for Bessel potentials  in the proof of \cref{thm:FI}. 
We expect the latter to be the optimal regularity dependence even with the sharp $N^{\frac{\s}{\d}-1}$ error, but it does not seem obtainable with our current proof. The solution to this problem likely requires devising a different renormalization argument compared to \cite{nguyen_mean-field_2022, hess-childs_sharp_2025}, given this part of the proof is the source of the issue. As the latter cited work is the only one that gives sharp errors, let us explain more why its approach does not seem amenable to obtaining the optimal transport regularity.

The starting point for the argument of \cite{hess-childs_sharp_2025} is a ``wavelet-type'' representation of the Riesz potential $\g$ as an average of approximate identities, which may be truncated at small scale $\eta\ge 0$ to obtain
	\begin{align}\label{eq:introgintrep}
		\forall x\ne 0,\qquad \g_\eta(x): = \mathsf{c}_{\phi,\d,\s}\int_\eta^\infty t^{\d-\s}\phi_t(x)\frac{dt}{t},
	\end{align}
	where $\phi$ is any sufficiently nice radial function with nonnegative Fourier transform, $\phi_t(x)\coloneqq t^{-\d}\phi(x/t)$, and $\mathsf{c}_{\phi,\d,\s}$ depends on $\phi,\d,\s$. 
	Given suitable assumptions on $\phi$ (see \cite[Lemma 2.2]{hess-childs_sharp_2025}), $\g_\eta$ enjoys a number of nice properties. For instance, $\g_\eta$ is bounded for $\eta>0$, $\g_\eta\leq \g$, $\g_\eta$ is positive definite, and $\g-\g_\eta$ decays rapidly at scale $\eta$ outside the ball $B(0,\eta)$. In the log case $\s=0$, the integral has to be renormalized at large scales to obtain a convergent expression. 
    
    The main technical result \cite[Corollary 3.2]{hess-childs_sharp_2025} is a commutator estimate for the truncated potential,
	\begin{equation}\label{eq:CEtrunc}
		\bigg|\int_{(\R^\d)^2}(v(x)-v(y))\cdot\nabla\g_\eta(x-y) f(x) f(y)\bigg|\leq C(v) \int_{(\R^\d)^2} \g_\eta(x-y) f(x)f(y).
	\end{equation}
	This is achieved by using the representation~\eqref{eq:introgintrep} and scaling invariance to reduce the problem to proving the estimate with $\g_\eta$ replaced by $\phi$. Taking $\phi$ to be the Bessel potential $\hat\phi(\xi) = \jp{2\pi\xi}^{-\mathsf{a}}$ with $\mathsf{a}\in(\d,\d+2)$, this then holds as a consequence of Kato-Ponce type commutator estimates for the inhomogeneous Fourier multiplier $\jp{\nab}^{\as}$ (the availability of such estimates motivates our choice of $\phi$), but with a norm on $v$ of the form $\||\nabla|^{\mathsf{a}/2} v\|_{L^{\frac{2\d}{\mathsf{a}-2}}}$. The upper bound on $\as$ is not important to us, as we would like to take $\as=\d-\s$. The lower bound is natural given $\as>\d$ is necessary for $\phi$ to be bounded, which in turn is needed for the truncated potential $\g_\eta$ to be bounded. 
	
	\cref{thm:CEF2} suggests that approaches based on an intermediate kernel $\phi$ with fast Fourier decay necessarily result in an estimate with a suboptimal regularity requirement for $v$. The question thus remains whether~\eqref{eq:CEtrunc} can be established directly, i.e.~without relying on an intermediate commutator estimate for the potential $\phi$. 


\subsection{Localizability}\label{ssec:RCloc}
	Another important question is the \emph{localizability} of commutator estimates. To illustrate what we mean, consider the Coulomb case $\s=\d-2$. Integrating by parts, one has the \textit{electric reformulation} of the energy
	\begin{align}
	    \int_{(\R^\d)^2}\g(x-y)f(x)f(y)=  \frac{1}{\cd}\int_{\R^\d}|\nabla h^f|^2,
	\end{align}
	where $h^f\coloneqq\g*f$. Evidently, this latter quantity may be localized to any domain $\Omega\subset\R^\d$. Exploiting the stress-energy tensor structure the commutator possesses when $\s=\d-2$, one has the localized commutator estimate
	\begin{equation}
		\bigg|\int_{(\R^\d)^2} (v(x)-v(y))\cdot\nabla\g(x-y)f(x)f(y)\bigg|\leq C\|\nabla v\|_{L^\infty}\int_{\supp v}|\nabla h^f|^2.
	\end{equation}
	This argument can also be extended to $\s>\d-2$, as well as to higher-order commutator estimates and their renormalized variants~\cite{leble_fluctuations_2018, serfaty_gaussian_2023,rosenzweig_sharp_nodate}. Such localized estimates are important for applications to CLTs for the fluctuations of Riesz gases at mesoscales (e.g. see \cite[Part 3]{serfaty_lectures_2024}).
    
    The case of a local version of the (renormalized) commutator estimate is open for the sub-Coulomb range, where the commutator no longer has the same stress-energy tensor structure as is crucial to the argument in~\cite{rosenzweig_sharp_nodate}. Moreover, the proofs in \cite{nguyen_mean-field_2022, hess-childs_sharp_2025} are not suitable for localized estimates given their reliance on intermediary nonlocal commutator estimates.
    
    In the (sub-Coulomb) ``local'' case $\s=\d-2k$, for integer $k\ge 2$, where $\g$ is the fundamental solution of $(-\Delta)^k$, one can still reformulate the energy,
    \begin{align}\label{eq:subCouelec}
        \int_{(\R^\d)^2}\g(x-y)f(x)f(y)= \frac{1}{\cd}\int_{\R^\d}|\nab^{\otimes k} h^f|^2,
    \end{align}
    where the electric field $\nabla h^f$ is replaced by the $k$-tensor field $\nab^{\otimes k}h^f$, and evidently, the latter quantity in \eqref{eq:subCouelec} may be localized as before. Preliminary calculations indicate that there is a ``higher-order'' stress-energy tensor structure to the commutator in this case, which may be exploited through integration by parts to prove an unrenormalized commutator estimate. That said, the optimal regularity dependence for $v$ remains to be determined. In the nonlocal case $\s\ne \d-2k$, this reasoning can likely be adapted using the generalization of the Caffarelli-Silvestre extension for higher powers of the fractional Laplacian \cite{yang_higher_2013}, analogous to the relation between the Coulomb case $\d-2$ and the super-Coulomb case $\s>\d-2$.
    
    What is less straightforward and the crux of the problem is finding a suitable renormalization scheme with which to combine the unrenormalized localized commutator estimate to obtain a localized version of the estimate \eqref{eq:FIsubC1} or \eqref{eq:FIsubC2}. The two renormalization schemes available for the sub-Coulomb case, the averaging scheme in \cite{nguyen_mean-field_2022} and the potential truncation scheme in \cite{hess-childs_sharp_2025}, do not seem suited to this task. The former will not produce sharp errors in $N$, while the latter cannot obviously be combined with the localized electric formulation described above.

\subsection{Higher-order estimates}\label{ssec:RQhigher}
So far, we have only considered first-order commutator estimates corresponding to the first variation \eqref{15} of the modulated energy. Also of interest are the higher-order variations corresponding to $n\ge 2$ in 
\begin{align}\label{15'}
 \frac{d^n}{dt^n}\Big|_{t=0} \Fr_N( (\I + tv)^{\oplus N} (\ux_N), (\I + tv)\# \mu) &= {\frac12}\int_{(\R^\d)^2\setminus \triangle} 
\nabla^{\otimes n} \g(x-y):  (v(x)-v(y))^{\otimes n}  d ( \mu_N- \mu)^{\otimes 2} \nn\\
&\eqqcolon\As_{n}[\XN,\mu,v],
\end{align}
where $\I+tv, (\I+tv)^{\oplus N}$ are as in \cref{ssec:introMEcomm} and $:$ denotes the inner product between tensors. As in the first-order case, one seeks to control these quantities by $C_v(\Fr_N(\ux_N, \mu)+ C_\mu N^{-\alpha})$, and we call such control a \emph{higher-order commutator estimate}.

Second-order (i.e. $n=2$) estimates are important for the aforementioned transport method for studying the fluctuations of Coulomb/Riesz gases, as well as for deriving mean-field limits with multiplicative noise \cite{rosenzweig_mean-field_2020}. Estimates beyond second order are also useful, allowing to  obtain finer estimates  on the fluctuations of Riesz gases to treat a broader class of interactions \cite{peilen_local_2025} and also to compute the asymptotics of $n$-th order cumulants \cite{rosenzweig_cumulants_nodate}.

Higher-order commutator estimates were first shown at second order in \cite{leble_fluctuations_2018,rosenzweig_mean-field_2020}  for the $\d=2$ Coulomb case, then in \cite{serfaty_gaussian_2023} for the general  Coulomb case (but with an estimate that is suboptimal in its dependence on $\Fr_N$), and later in \cite{nguyen_mean-field_2022,rosenzweig_wasserstein_nodate} for the full range $-2 < \s<\d$, as well as for more general Riesz-type interactions. The approach of \cite{nguyen_mean-field_2022} yields higher-order estimates, but these are generally not sharp in their additive error. In contrast, the estimates from \cite{serfaty_gaussian_2023} are sharp but only to second order for the $\d=2$ Coulomb case. More importantly, their proof is quite intricate and not evidently generalizable to higher order. In \cite{rosenzweig_sharp_nodate}, sharp localized estimates were shown at all orders for the (super-)Coulomb case by means of a delicate induction argument.

Although it is somewhat orthogonal to our focus on transport regularity, given its importance for studying fluctuations around the mean-field limit, we start by posing the problem of a sharp estimate for higher-order commutators in the sub-Coulomb case $\s<\d-2$. The aforementioned work \cite{hess-childs_sharp_2025} by the authors proves a sharp first-order estimate, but it is not clear how to extend the method of that paper to $n\ge 2$. Desymmetrizing \eqref{15'} and writing in terms of iterated commutators entails difficulties of considering commutators of commutators, not too mention the algebraic complexity as $n$ increases. Moreover, the approach to higher orders in \cite{nguyen_mean-field_2022} via integrating by parts to throw derivatives onto the commutator kernel is not obviously compatible with the renormalization scheme in \cite{hess-childs_sharp_2025}.

Even for the (super)Coulomb case, where a sharp localized estimate at any order has been shown by the last two authors in \cite{rosenzweig_sharp_nodate}, the transport regularity assumptions are not fully satisfactory, and this issue is intimately linked to the regularity theory for commutators, first initiated in that work. To illustrate what we mean, we recall some facts from \cite{rosenzweig_sharp_nodate}. So as to focus on the main ideas, we limit the discussion to the Coulomb case, but we note that everything may be generalized to the super-Coulomb case. 

Fixing a vector field $v$ and given a test function $f$, let $\ka^{(n),f}$ be the \emph{$n$-th order commutator} defined by
\begin{align}\label{eq:kanfdef}
    \kappa^{(n),f}(x) \coloneqq \int_{\R^\d} \nabla^{\otimes n} \g(x-y) : (v(x)-v(y))^{\otimes n} df(y).
\end{align}
$\kappa^{(n),f}$ obeys the PDE (see \cite[Section 4.1]{rosenzweig_sharp_nodate} for the calculation)
\begin{multline}\label{eq:introLkanf}
-\Delta\ka^{(n),f} = \cd(-1)^n\sum_{\sigma \in \Ss_n} \p_{i_{\sigma_1}}{v}^{i_1}\cdots\p_{i_{\sigma_n}}v^{i_n}f  -n\p_i(\p_i v \cdot\nu^{(n-1),f})  \\
- n\p_i v \cdot \p_i\nu^{(n-1),f}+ n(n-1)(\p_i v)^{\otimes 2}:\mu^{(n-2),f},
\end{multline}
where $\mathbb{S}_n$ denotes the symmetric group on $[n]$ and a repeated index denotes summation over that index. On the right-hand side, $\nu$ is a vector field on $\R^{\d}$ that ``morally'' is like $\nab\ka^{(n),f}$, while $\mu$ (not to be confused with the mean-field density) is a symmetric matrix field on $\R^{\d}$ that morally is like $\nab^{\otimes 2}\ka^{(n),f}$ (see \cite[(4.12), (4.14)]{rosenzweig_sharp_nodate}, respectively, for the precise definitions). The right-hand side of \eqref{eq:introLkanf} is good because it only depends on lower-order (i.e.~ $n-1$, $n-2$) commutators.

We believe that the right-hand side of \eqref{eq:introLkanf} can be written in divergence form involving terms built from products of the components of $\nab v$ and $\nu^{(m)}$ for $m\le n-1$. More precisely, we conjecture
\begin{align}\label{eq:introLkanfdiv}
-\Delta\ka^{(n),f} = - n\div(\nab v^{i}\nu_{i}^{(n-1),f}) +  \sum_{k=1}^{n} C_{k,n}\sum_{\sigma \in \Ss_{k+1}} (-1)^{|\sigma|} \p_{i_{\sigma(k+1)}}\Big( \p_{i_{\sigma(1)}}v^{i_1}\cdots \p_{i_{\sigma(k)}}v^{i_k} \nu_{i_{k+1}}^{(n-k),f}\Big),
\end{align}
where $C_{k,n}$ are certain combinatorial coefficients and $|\sigma|$ denotes the signature of the permutation $\sigma$.  By testing \eqref{eq:introLkanfdiv} against $h^w\coloneqq \g\ast w$, using that
\begin{align}
\int_{(\R^\d)^2}(v(x)-v(y))\cdot\nab\g(x-y)f(x)g(y) = \frac{1}{\cd}\int_{\R^{\d}}(-\Delta)\ka^{(n),f}h^g,
\end{align}
and reversing the product rule, one arrives at a stress-energy tensor structure to the higher-order commutators. If such an identity \eqref{eq:introLkanfdiv} holds and one has a bound
\begin{align}\label{eq:introL2nu}
\int_{\R^{\d}}|\nu^{(m),f}|^2 \leq C_{m}(\|\nab v\|_{L^\infty})^{2m} \int_{\supp\nab v}|\nab h^f|^2, \qquad m\le n-1,
\end{align}
then it follows immediately from integration by parts and Cauchy-Schwarz that
\begin{align}\label{eq:introL2nabka}
\int_{\R^{\d}} |\nab \ka^{(n),f}|^2  \le C_{n}(\|\nab v\|_{L^\infty})^{2n}\int_{\supp\nab v}|\nab h^f|^2.
\end{align}
Since the $\nu$'s obey a recursion in terms of the $\nab \ka$'s (see \cite[(4.13)]{rosenzweig_sharp_nodate}), the estimate \eqref{eq:introL2nabka} implies that \eqref{eq:introL2nu} holds for $m=n$ and by induction, the estimates \eqref{eq:introL2nabka}, \eqref{eq:introL2nu} hold for any integer $n\ge 1$. 

Unfortunately, we were only able to prove the identity \eqref{eq:introLkanfdiv} for $n\le 2$ (see \cite[Section 4.3]{rosenzweig_sharp_nodate}), and the computation is already quite involved once $n=2$. If we stick with the non-divergence form of the right-hand side of \eqref{eq:introLkanf}, then we encounter terms that have a factor of $\ka^{(n),f}$. For example, 
\begin{align}
\int_{\R^{\d}}\ka^{(n),f}\p_i v \cdot \p_i\nu^{(n-1),f} = -\int_{\R^{\d}}\p_i\ka^{(n),f} \p_{i}v\cdot\nu^{(n-1),f}  -\int_{\R^{\d}}\ka^{(n),f}\p_i^2 v\cdot \nu^{(n-1),f}. 
\end{align}
One cannot simply use Cauchy-Schwarz on the second term on the right-hand side, as in general, there is no way to control $\int_{\supp\nab v}|\ka^{(n),f}|^2$ by $\int_{\supp\nab v}|\nab\ka^{(n),f}|^2$. However, if one's primary interest is in localized estimates, then there is a way out of this issue by exploiting the localization from the start. 
 Since $-\Delta\ka^{(n),f}$ has zero average, we may equivalently test the equation \eqref{eq:introLkanf} against $\ka^{(n),f} -\bar{\ka}^{(n),f}$, where $\bar{\ka}^{(n),f}$ denotes the average in a ball $B$ containing the support of $v$. The strategy is to integrate by parts and use Cauchy-Schwarz as before, setting up an induction argument for estimates satisfied by $\ka^{(m),f}, \nu^{(m),f}, \mu^{(m),f}$ (see \cite[Lemma 4.4]{rosenzweig_sharp_nodate}). Crucially, the Lebesgue measure on $B$ satisfies a Poincar\'{e} inequality, which allows to control $\int_{B} |\ka^{(n),f}-\bar{\ka}^{(n),f}|^2$ by $\int_{B}|\nab\ka^{(n),f}|^2$.  

 Although this approach requires a bound for $\|\nab^{\otimes 2}v\|_{L^\infty}$, which is a stronger demand than the Lipschitz requirement that would follow if \eqref{eq:introLkanfdiv} holds, this is still sufficient for applications, such as to CLTs for linear statistics of Coulomb/Riesz gases. Nevertheless, improving the regularity to $\|\nab v\|_{L^\infty}$ would allow one to consider rougher test functions in this application, not to mention the aesthetic motivation for an estimate that is uniform in the localization.

\subsection{Scaling-critical mean-field convergence for $p=\infty$}\label{ssec:RQmf}
The last question we consider concerns \cref{thm:FI,thm:MF} in the excluded case $p=\infty$. The case $p=1$ is also excluded, but this is not particularly relevant because the critical space $\dot{W}^{\s+2,1}$ never corresponds to a nonincreasing/conserved quantity, unlike $L^\infty$, which is scaling-critical in the Coulomb case.

First, the reader should recall from \cref{rem:dcompinfty} that per our notation (see \cref{ssec:introN}), the limit of the space $\dot{W}^{\frac{\d}{p}+\s+2-\d,p}$ as $p\rightarrow\infty$ is the space
\begin{align}
    \{f\in \Sc'(\R^\d) : \|\jp{\nab}^{\s+2-\d} f\|_{L^\infty}<\infty\}.
\end{align}
This space is not well-behaved from a harmonic analysis perspective in terms of singular integral operators/Fourier multipliers being bounded on it, and a more natural $L^\infty$-type space is the homogeneous H\"older-Zygmund space $\dot{\mathcal{C}}^{\s+2-\d}$, which is well-behaved. In particular, we have
\begin{align}
    \|\M\nab\g\ast\mu\|_{\dot{\mathcal{C}}^1} \lesssim |\M| \|\mu\|_{\dot{\mathcal{C}}^{\s+2-\d}}.
\end{align}
If $\s=\d-2$ (i.e. Coulomb), then $\dot{\mathcal{C}}^0$ should be understood as the homogeneous Besov space $\dot{B}_{\infty,\infty}^0$, into which $L^\infty$ embeds.

One can still prove a defective commutator estimate like \cref{thm:FI} when the vector field $v\in \dot{\mathcal{C}}^1$, but the divergent prefactor $(1+|\log\ep|)^{1-\frac1p}$ that comes from estimating $\|\nab v_\ep\|_{L^\infty}$ is replaced by $1+|\log\ep|$. In the log case $\s=0$, this factor $1+|\log\ep|$ is still sufficient to prove a version of \cref{thm:MF} (see \cite[Proposition 1.8]{rosenzweig_mean-field_2022-1}) by essentially taking $\ep=\ep_t$ to be comparable to the modulated energy $\Fr_N(\XN^t,\mu^t)$ and using that $r|\log r|$ satisfies the conditions of the Osgood lemma. Importantly, in estimating the mollification error $v-v_\ep$ (specifically, $\Te_1$ in the proof of \cref{lem:molvepLL}), the log-Lipschitz regularity of $v$ allows one to bound for $|x_i-x_j|\le \frac12$,
\begin{align}
    \Big|\nab\g(x_i-x_j)\cdot\Big((v-v_\ep)(x_i)-(v-v_\ep)(x_j)\Big)\Big| \le C\|v\|_{\dot{\mathcal{C}}^1}\frac{|\log|x_i-x_j||}{|x_i-x_j|^{\s}} = C\|v\|_{\dot{\mathcal{C}}^1}\g(x_i-x_j).
\end{align}
This relation is, of course, only true in the log case. When $0<\s<\d$, one has the leftover factor $|\log|x_i-x_j||$. In the proof of \cite[Proposition 4.1]{rosenzweig_mean-field_2022}, this extra log factor is handled by controlling the minimal distance between points by the $N$-particle energy. Taking $\ep\propto N^{-\alpha}$ for large enough $\alpha>0$ then allows one to close a Gr\"onwall relation, but only for short times depending on the size of the initial modulated energy. The reason is that one encounters the quantity $N^{-\be}e^{Ct|\log N^{-\al}|}$, for some $\be>0$, which only vanishes as $N\rightarrow\infty$ if $t$ is small enough (independently of $N$). Even replacing this part of the argument with that used to estimate $\Te_1$ in \cref{lem:molvepLL} would not suffice if we only had bound $\|\nab v_\ep\|_{L^\infty} \lesssim (1+|\log\ep|)$. The prefactor $N^{\frac{2(\s+1)}{\s}-1}\ep$ on the right-hand side of \eqref{eq:molvepLL} when $0<\s<\d$ again requires our taking $\ep \lesssim N^{-\alpha}$ for large enough $\alpha$, and we would run into the same short-time issue as before. Thus, what is essential is having the control $\|\nab v_\ep\|_{L^\infty}\lesssim (1+|\log\ep|)^{\theta}$ for $\theta \in (0,1)$, which is made possible by the stronger assumption that $v\in \dot{W}^{\frac{\d}{p}+1,p}$.

To date, mean-field convergence under the assumption that the limiting density $\mu\in L_t^\infty L_x^\infty$ has only been shown to hold on some interval of time $[0,T_*]$, despite the fact that solutions of the mean-field equation are global. Removing this short-time restriction, which we think is purely technical, is an important open problem. More generally, it would be desirable to show convergence without any short-time restriction in the super-Coulomb case when $\mu \in L_t^\infty \dot{\mathcal{C}}_x^{\s+2-\d}$.

\bibliography{Master}

\newcommand{\etalchar}[1]{$^{#1}$}
\begin{thebibliography}{CdCRS25}

\bibitem[AHS23]{altekruger_neural_2023}
Fabian Altekrüger, Johannes Hertrich, and Gabriele Steidl.
\newblock Neural {Wasserstein} {Gradient} {Flows} for {Discrepancies} with
  {Riesz} {Kernels}.
\newblock In Andreas Krause, Emma Brunskill, Kyunghyun Cho, Barbara Engelhardt,
  Sivan Sabato, and Jonathan Scarlett, editors, {\em Proceedings of the 40th
  {International} {Conference} on {Machine} {Learning}}, volume 202 of {\em
  Proceedings of {Machine} {Learning} {Research}}, pages 664--690. PMLR, July
  2023.

\bibitem[BBNY19]{bauerschmidt_two-dimensional_2019}
Roland Bauerschmidt, Paul Bourgade, Miika Nikula, and Horng-Tzer Yau.
\newblock The two-dimensional {Coulomb} plasma: quasi-free approximation and
  central limit theorem.
\newblock {\em Advances in Theoretical and Mathematical Physics},
  23(4):841--1002, 2019.

\bibitem[BCD11]{bahouri_fourier_2011}
Hajer Bahouri, Jean-Yves Chemin, and Raphaël Danchin.
\newblock {\em Fourier analysis and nonlinear partial differential equations},
  volume 343 of {\em Grundlehren der {Mathematischen} {Wissenschaften}
  [{Fundamental} {Principles} of {Mathematical} {Sciences}]}.
\newblock Springer, Heidelberg, 2011.

\bibitem[BG80]{brezis_nonlinear_1980}
H.~Brézis and T.~Gallouet.
\newblock Nonlinear {Schrödinger} evolution equations.
\newblock {\em Nonlinear Anal.}, 4(4):677--681, 1980.
\newblock ISBN: 0362-546X, 1873-5215 Type: doi:10.1016/0362-546X(80)90068-1.

\bibitem[BG13]{borot_asymptotic_2013-1}
G.~Borot and A.~Guionnet.
\newblock Asymptotic expansion of {$\beta$} matrix models in the one-cut
  regime.
\newblock {\em Communications in Mathematical Physics}, 317(2):447--483, 2013.

\bibitem[BG24]{borot_asymptotic_2024}
Gaëtan Borot and Alice Guionnet.
\newblock Asymptotic expansion of {$\beta$} matrix models in the multi-cut
  regime.
\newblock {\em Forum of Mathematics. Sigma}, 12:Paper No. e13, 93, 2024.

\bibitem[BHS19]{borodachov_discrete_2019}
Sergiy~V. Borodachov, Douglas~P. Hardin, and Edward~B. Saff.
\newblock {\em Discrete energy on rectifiable sets}.
\newblock Springer {Monographs} in {Mathematics}. Springer, New York, 2019.

\bibitem[BJW19a]{bresch_modulated_2019}
Didier Bresch, Pierre-Emmanuel Jabin, and Zhenfu Wang.
\newblock Modulated free energy and mean field limit.
\newblock {\em Séminaire Laurent Schwartz–EDP et applications}, pages 1--22,
  2019.

\bibitem[BJW19b]{bresch_mean-field_2019}
Didier Bresch, Pierre-Emmanuel Jabin, and Zhenfu Wang.
\newblock On mean-field limits and quantitative estimates with a large class of
  singular kernels: application to the {Patlak}-{Keller}-{Segel} model.
\newblock {\em Comptes Rendus Mathématique. Académie des Sciences. Paris},
  357(9):708--720, 2019.

\bibitem[BJW23]{bresch_mean_2023}
Didier Bresch, Pierre-Emmanuel Jabin, and Zhenfu Wang.
\newblock Mean field limit and quantitative estimates with singular attractive
  kernels.
\newblock {\em Duke Mathematical Journal}, 172(13):2591--2641, 2023.

\bibitem[BLS18]{bekerman_clt_2018}
Florent Bekerman, Thomas Leblé, and Sylvia Serfaty.
\newblock {CLT} for fluctuations of {$\beta$}-ensembles with general potential.
\newblock {\em Electronic Journal of Probability}, 23:Paper no. 115, 31, 2018.

\bibitem[Bn19]{berman_propagation_2019}
Robert~J. Berman and Magnus Önnheim.
\newblock Propagation of {Chaos} for a {Class} of {First} {Order} {Models} with
  {Singular} {Mean} {Field} {Interactions}.
\newblock {\em SIAM Journal on Mathematical Analysis}, 51(1):159--196, January
  2019.
\newblock Publisher: Society for Industrial and Applied Mathematics.

\bibitem[BW80]{brezis_note_1980}
Haim Brezis and Stephen Wainger.
\newblock A note on limiting cases of sobolev embeddings and convolution
  inequalities.
\newblock {\em Communications in Partial Differential Equations},
  5(7):773--789, January 1980.
\newblock Publisher: Taylor \& Francis \_eprint:
  https://doi.org/10.1080/03605308008820154.

\bibitem[Cal80]{calderon_commutators_1980}
A.-P. Calderón.
\newblock Commutators, singular integrals on {Lipschitz} curves and
  applications.
\newblock In {\em Proceedings of the {International} {Congress} of
  {Mathematicians} ({Helsinki}, 1978)}, pages 85--96. Acad. Sci. Fennica,
  Helsinki, 1980.

\bibitem[CCH14]{carrillo_derivation_2014}
José~Antonio Carrillo, Young-Pil Choi, and Maxime Hauray.
\newblock The derivation of swarming models: mean-field limit and {Wasserstein}
  distances.
\newblock In {\em Collective dynamics from bacteria to crowds}, volume 553 of
  {\em {CISM} {Courses} and {Lect}.}, pages 1--46. Springer, Vienna, 2014.

\bibitem[CdCRS23]{chodron_de_courcel_sharp_2023}
Antonin Chodron~de Courcel, Matthew Rosenzweig, and Sylvia Serfaty.
\newblock Sharp uniform-in-time mean-field convergence for singular periodic
  {Riesz} flows.
\newblock {\em Annales de l'Institut Henri Poincaré C}, 42(2):391--472,
  December 2023.

\bibitem[CdCRS25]{chodron_de_courcel_attractive_2025}
Antonin Chodron~de Courcel, Matthew Rosenzweig, and Sylvia Serfaty.
\newblock The attractive log gas: {Stability}, uniqueness, and propagation of
  chaos.
\newblock {\em Communications of the American Mathematical Society},
  5:695--773, 2025.

\bibitem[CFGW24]{cai_propagation_2024}
Shuzhe Cai, Xuanrui Feng, Yun Gong, and Zhenfu Wang.
\newblock Propagation of chaos for 2d log gas on the whole space.
\newblock {\em arXiv preprint arXiv:2411.14777}, 2024.

\bibitem[CFP12]{carrillo_mass-transportation_2012}
José~A. Carrillo, Lucas C.~F. Ferreira, and Juliana~C. Precioso.
\newblock A mass-transportation approach to a one dimensional fluid mechanics
  model with nonlocal velocity.
\newblock {\em Advances in Mathematics}, 231(1):306--327, 2012.

\bibitem[CJ87]{christ_polynomial_1987}
Michael Christ and Jean-Lin Journé.
\newblock Polynomial growth estimates for multilinear singular integral
  operators.
\newblock {\em Acta Mathematica}, 159(1-2):51--80, 1987.

\bibitem[CM78]{coifman_commutateurs_1978}
R.~Coifman and Y.~Meyer.
\newblock Commutateurs d'intégrales singulières et opérateurs
  multilinéaires.
\newblock {\em Université de Grenoble. Annales de l'Institut Fourier},
  28(3):xi, 177--202, 1978.

\bibitem[CN25]{cecchin_convergence_2025}
Alekos Cecchin and Paul Nikolaev.
\newblock Convergence rate for {Fluctuations} of mean field interacting
  diffusion and application to {2D} viscous {Vortex} model and {Coulomb}
  potential.
\newblock {\em arXiv preprint arXiv:2509.01266}, 2025.

\bibitem[CRW76]{coifman_factorization_1976}
R.~R. Coifman, R.~Rochberg, and Guido Weiss.
\newblock Factorization {Theorems} for {Hardy} {Spaces} in {Several}
  {Variables}.
\newblock {\em Annals of Mathematics}, 103(3):611--635, 1976.
\newblock Publisher: [Annals of Mathematics, Trustees of Princeton University
  on Behalf of the Annals of Mathematics, Mathematics Department, Princeton
  University].

\bibitem[DRAW02]{dauxois_dynamics_2002}
Thierry Dauxois, Stefano Ruffo, Ennio Arimondo, and Martin Wilkens.
\newblock Dynamics and thermodynamics of systems with long-range interactions:
  an introduction.
\newblock In {\em Dynamics and thermodynamics of systems with long-range
  interactions ({Les} {Houches}, 2002)}, volume 602 of {\em Lecture {Notes} in
  {Phys}.}, pages 1--19. Springer, Berlin, 2002.

\bibitem[Due16]{duerinckx_mean-field_2016}
Mitia Duerinckx.
\newblock Mean-{Field} {Limits} for {Some} {Riesz} {Interaction} {Gradient}
  {Flows}.
\newblock {\em SIAM Journal on Mathematical Analysis}, 48(3):2269--2300, 2016.
\newblock \_eprint: https://doi.org/10.1137/15M1042620.

\bibitem[Eng89]{engler_alternative_1989}
Hans Engler.
\newblock An {Alternative} {Proof} of the {Brezis} - {Wainger} {Inequality}.
\newblock {\em Communications in Partial Differential Equations},
  14(4):541--544, April 1989.
\newblock Publisher: Taylor \& Francis \_eprint:
  https://doi.org/10.1080/03605302.1989.12088448.

\bibitem[FW24]{feng_quantitative_2024}
Xuanrui Feng and Zhenfu Wang.
\newblock Quantitative {Propagation} of {Chaos} for {2D} {Viscous} {Vortex}
  {Model} with {General} {Circulations} on the {Whole} {Space}.
\newblock {\em arXiv preprint arXiv:2411.14266}, 2024.

\bibitem[GBM24]{guillin_uniform_2024}
Arnaud Guillin, Pierre~Le Bris, and Pierre Monmarché.
\newblock Uniform in time propagation of chaos for the {2D} vortex model and
  other singular stochastic systems.
\newblock {\em Journal of the European Mathematical Society}, 2024.

\bibitem[GBR{\etalchar{+}}06]{gretton_kernel_2006}
Arthur Gretton, Karsten Borgwardt, Malte Rasch, Bernhard Schölkopf, and Alex
  Smola.
\newblock A {Kernel} {Method} for the {Two}-{Sample}-{Problem}.
\newblock In B.~Schölkopf, J.~Platt, and T.~Hoffman, editors, {\em Advances in
  {Neural} {Information} {Processing} {Systems}}, volume~19. MIT Press, 2006.

\bibitem[GBR{\etalchar{+}}07]{gretton_kernel_2007}
Arthur Gretton, Karsten~M Borgwardt, Malte Rasch, Bernhard Schölkopf, and
  Alexander~J Smola.
\newblock A kernel approach to comparing distributions.
\newblock In {\em Proceedings of the national conference on artificial
  intelligence}, volume~22, page 1637. Menlo Park, CA; Cambridge, MA; London;
  AAAI Press; MIT Press; 1999, 2007.
\newblock Issue: 2.

\bibitem[GBR{\etalchar{+}}12]{gretton_kernel_2012}
Arthur Gretton, Karsten~M. Borgwardt, Malte~J. Rasch, Bernhard Schölkopf, and
  Alexander Smola.
\newblock A kernel two-sample test.
\newblock {\em Journal of Machine Learning Research (JMLR)}, 13:723--773, 2012.

\bibitem[Gol16]{golse_dynamics_2016}
François Golse.
\newblock On the dynamics of large particle systems in the mean field limit.
\newblock In {\em Macroscopic and large scale phenomena: coarse graining, mean
  field limits and ergodicity}, volume~3 of {\em Lect. {Notes} {Appl}. {Math}.
  {Mech}.}, pages 1--144. Springer, 2016.

\bibitem[GP22]{GP2022}
François Golse and Thierry Paul.
\newblock Mean-{Field} and {Classical} {Limit} for the {N}-{Body} {Quantum}
  {Dynamics} with {Coulomb} {Interaction}.
\newblock {\em Communications on Pure and Applied Mathematics},
  75(6):1332--1376, 2022.
\newblock \_eprint: https://onlinelibrary.wiley.com/doi/pdf/10.1002/cpa.21986.

\bibitem[Gra14]{grafakos_modern_2014}
Loukas Grafakos.
\newblock {\em Modern {Fourier} {Analysis}}.
\newblock Number 250 in Graduate {Texts} in {Mathematics}. Springer, third
  edition, 2014.

\bibitem[Han79]{hansson_imbedding_1979}
Kurt Hansson.
\newblock Imbedding theorems of {Sobolev} type in potential theory.
\newblock {\em Math. Scand.}, 45(1):77--102, 1979.
\newblock ISBN: 0025-5521, 1903-1807 Type: doi:10.7146/math.scand.a-11827.

\bibitem[Har07]{MR2262450}
Dorothee~D. Haroske.
\newblock {\em Envelopes and sharp embeddings of function spaces}, volume 437
  of {\em Chapman \& hall/{CRC} research notes in mathematics}.
\newblock Chapman \& Hall/CRC, Boca Raton, FL, 2007.
\newblock Pages: x+227 tex.mrclass: 46E35 (42B35 46E30 47B38 47G10)
  tex.mrnumber: 2262450 tex.mrreviewer: LeszekSkrzypczak.

\bibitem[Hau09]{hauray_wasserstein_2009}
Maxime Hauray.
\newblock Wasserstein distances for vortices approximation of {Euler}-type
  equations.
\newblock {\em Mathematical Models and Methods in Applied Sciences},
  19(8):1357--1384, 2009.

\bibitem[HBGS23]{hertrich_wasserstein_2023}
Johannes Hertrich, Robert Beinert, Manuel Gräf, and Gabriele Steidl.
\newblock Wasserstein gradient flows of the discrepancy with distance kernel on
  the line.
\newblock In Luca Calatroni, Marco Donatelli, Serena Morigi, Marco Prato, and
  Matteo Santacesaria, editors, {\em Scale {Space} and {Variational} {Methods}
  in {Computer} {Vision}}, pages 431--443, Cham, 2023. Springer International
  Publishing.

\bibitem[HC23]{hess-childs_large_2023}
Elias Hess-Childs.
\newblock Large deviation principles for singular {Riesz}-type diffusive flows.
\newblock {\em arXiv preprint arXiv:2312.02904}, 2023.

\bibitem[HCRS]{hess-childs_optimal_nodate}
Elias Hess-Childs, Matthew Rosenzweig, and Sylvia Serfaty.
\newblock Optimal quantization for {Riesz} {Maximum} {Mean} {Discrepancies}.
\newblock In preparation.

\bibitem[HCRS25]{hess-childs_sharp_2025}
Elias Hess-Childs, Matthew Rosenzweig, and Sylvia Serfaty.
\newblock A sharp commutator estimate for all {Riesz} modulated energies,
  November 2025.
\newblock arXiv:2511.13461 [math].

\bibitem[HHA{\etalchar{+}}23]{hagemann_posterior_2023}
Paul Hagemann, Johannes Hertrich, Fabian Altekrüger, Robert Beinert, Jannis
  Chemseddine, and Gabriele Steidl.
\newblock Posterior sampling based on gradient flows of the {MMD} with negative
  distance kernel.
\newblock {\em arXiv preprint arXiv:2310.03054}, 2023.

\bibitem[HKI21]{han-kwan_newtons_2021}
Daniel Han-Kwan and Mikaela Iacobelli.
\newblock From {Newton}'s second law to {Euler}'s equations of perfect fluids.
\newblock {\em Proceedings of the American Mathematical Society},
  149(7):3045--3061, 2021.

\bibitem[HLSS18]{hardin_large_2018}
Douglas~P. Hardin, Thomas Leblé, Edward~B. Saff, and Sylvia Serfaty.
\newblock Large deviation principles for hypersingular {Riesz} gases.
\newblock {\em Constructive Approximation. An International Journal for
  Approximations and Expansions}, 48(1):61--100, 2018.

\bibitem[HM14]{hauray_kacs_2014}
Maxime Hauray and Stéphane Mischler.
\newblock On {Kac}'s chaos and related problems.
\newblock {\em Journal of Functional Analysis}, 266(10):6055--6157, 2014.

\bibitem[HRS]{huang_fluctuations_nodate}
Jiaoyang Huang, Matthew Rosenzweig, and Sylvia Serfaty.
\newblock Fluctuations around the mean field limit for noisy singular {Riesz}
  flows.
\newblock In preparation.

\bibitem[HSST21]{hardin_dynamics_2021}
Douglas Hardin, Edward~B. Saff, Ruiwen Shu, and Eitan Tadmor.
\newblock Dynamics of particles on a curve with pairwise hyper-singular
  repulsion.
\newblock {\em Discrete and Continuous Dynamical Systems}, 41(12):5509--5536,
  2021.

\bibitem[HWAH23]{hertrich_generative_2023}
Johannes Hertrich, Christian Wald, Fabian Altekrüger, and Paul Hagemann.
\newblock Generative sliced {MMD} flows with {Riesz} kernels.
\newblock {\em arXiv preprint arXiv:2305.11463}, 2023.

\bibitem[Jan80]{janson_generalizations_1980}
Svante Janson.
\newblock Generalizations of {Lipschitz} spaces and an application to {Hardy}
  spaces and bounded mean oscillation.
\newblock {\em Duke Mathematical Journal}, 47(4):959--982, December 1980.
\newblock Publisher: Duke University Press.

\bibitem[JN61]{https://doi.org/10.1002/cpa.3160140317}
F.~John and L.~Nirenberg.
\newblock On functions of bounded mean oscillation.
\newblock {\em Communications on Pure and Applied Mathematics}, 14(3):415--426,
  1961.
\newblock tex.eprint:
  https://onlinelibrary.wiley.com/doi/pdf/10.1002/cpa.3160140317.

\bibitem[JW18]{jabin_quantitative_2018}
Pierre-Emmanuel Jabin and Zhenfu Wang.
\newblock Quantitative estimates of propagation of chaos for stochastic systems
  with {W}$^{\textrm{-1,ınfty}}$ kernels.
\newblock {\em Inventiones Mathematicae}, 214(1):523--591, 2018.

\bibitem[Kla10]{klainerman_pde_2010}
Sergiu Klainerman.
\newblock {PDE} as a {Unified} {Subject}.
\newblock In N.~Alon, J.~Bourgain, A.~Connes, M.~Gromov, and V.~Milman,
  editors, {\em Visions in {Mathematics}: {GAFA} 2000 {Special} {Volume},
  {Part} {I}}, pages 279--315. Birkhäuser, Basel, 2010.

\bibitem[KNSS22]{kolouri_generalized_2022}
Soheil Kolouri, Kimia Nadjahi, Shahin Shahrampour, and Umut Simsekli.
\newblock Generalized {Sliced} {Probability} {Metrics}.
\newblock In {\em {ICASSP} 2022 - 2022 {IEEE} {International} {Conference} on
  {Acoustics}, {Speech} and {Signal} {Processing} ({ICASSP})}, pages
  4513--4517, 2022.

\bibitem[KPV93]{kenig_well-posedness_1993}
Carlos~E. Kenig, Gustavo Ponce, and Luis Vega.
\newblock Well-posedness and scattering results for the generalized
  {Korteweg}-de {Vries} equation via the contraction principle.
\newblock {\em Communications on Pure and Applied Mathematics}, 46(4):527--620,
  1993.

\bibitem[Li19]{li_kato-ponce_2019}
Dong Li.
\newblock On {Kato}-{Ponce} and fractional {Leibniz}.
\newblock {\em Revista Matemática Iberoamericana}, 35(1):23--100, 2019.

\bibitem[LS18]{leble_fluctuations_2018}
Thomas Leblé and Sylvia Serfaty.
\newblock Fluctuations of two dimensional {Coulomb} gases.
\newblock {\em Geometric and Functional Analysis}, 28(2):443--508, 2018.

\bibitem[LZ00]{lin_hydrodynamic_2000}
Fanghua Lin and Ping Zhang.
\newblock On the hydrodynamic limit of {Ginzburg}-{Landau} vortices.
\newblock {\em Discrete and Continuous Dynamical Systems}, 6(1):121--142, 2000.

\bibitem[Mü97]{muller_integral_1997}
Alfred Müller.
\newblock Integral probability metrics and their generating classes of
  functions.
\newblock {\em Advances in Applied Probability}, 29(2):429--443, 1997.

\bibitem[Mé24]{menard_mean-field_2024}
Matthieu Ménard.
\newblock Mean-field limit derivation of a monokinetic spray model with
  gyroscopic effects.
\newblock {\em SIAM Journal on Mathematical Analysis}, 56(1):1068--1113, 2024.

\bibitem[MD24]{modeste_characterization_2024}
Thibault Modeste and Clément Dombry.
\newblock Characterization of translation invariant {MMD} on {$\mathbb{R}^d$}
  and connections with {Wasserstein} distances.
\newblock {\em Journal of Machine Learning Research (JMLR)}, 25:Paper No.
  [237], 39, 2024.

\bibitem[Mos70]{moser_sharp_1970}
J.~Moser.
\newblock A sharp form of an inequality by {N}. {Trudinger}.
\newblock {\em Indiana University Mathematics Journal}, 20:1077--1092, 1970.

\bibitem[NRS22]{nguyen_mean-field_2022}
Quoc-Hung Nguyen, Matthew Rosenzweig, and Sylvia Serfaty.
\newblock Mean-field limits of {Riesz}-type singular flows.
\newblock {\em Ars Inveniendi Analytica}, pages Paper No. 4, 45, 2022.

\bibitem[PCJ25]{porat_singular_2025}
Immanuel~Ben Porat, José~A Carrillo, and Pierre-Emmanuel Jabin.
\newblock Singular flows with time-varying weights.
\newblock {\em arXiv preprint arXiv:2503.02276}, 2025.

\bibitem[Por23]{porat_derivation_2023}
Immanuel~Ben Porat.
\newblock Derivation of {Euler}'s equations of perfect fluids from von
  {Neumann}'s equation with magnetic field.
\newblock {\em Journal of Statistical Physics}, 190(7):Paper No. 121, 44, 2023.

\bibitem[PS17]{petrache_next_2017}
Mircea Petrache and Sylvia Serfaty.
\newblock Next order asymptotics and renormalized energy for {Riesz}
  interactions.
\newblock {\em Journal of the Institute of Mathematics of Jussieu. JIMJ.
  Journal de l'Institut de Mathématiques de Jussieu}, 16(3):501--569, 2017.

\bibitem[PS25]{peilen_local_2025}
Luke Peilen and Sylvia Serfaty.
\newblock Local {Laws} and {Fluctuations} for {Super}-{Coulombic} {Riesz}
  {Gases}, December 2025.
\newblock arXiv:2511.18623 [math-ph].

\bibitem[Ros]{rosenzweig_cumulants_nodate}
Matthew Rosenzweig.
\newblock Cumulants of {Riesz} gases.
\newblock Unpublished note.

\bibitem[Ros20]{rosenzweig_mean-field_2020}
Matthew Rosenzweig.
\newblock The {Mean}-{Field} {Limit} of {Stochastic} {Point} {Vortex} {Systems}
  with {Multiplicative} {Noise}.
\newblock {\em arXiv preprint arXiv:2011.12180}, 2020.
\newblock Forthcoming in Comm. Pure Appl. Math.

\bibitem[Ros21]{rosenzweig_quantum_2021}
Matthew Rosenzweig.
\newblock From {Quantum} {Many}-{Body} {Systems} to {Ideal} {Fluids}.
\newblock {\em arXiv preprint arXiv:2110.04195}, 2021.

\bibitem[Ros22a]{rosenzweig_mean-field_2022}
Matthew Rosenzweig.
\newblock The mean-field approximation for higher-dimensional {Coulomb} flows
  in the scaling-critical {$L^{\infty}$} space.
\newblock {\em Nonlinearity}, 35(6):2722--2766, May 2022.
\newblock Publisher: IOP Publishing.

\bibitem[Ros22b]{rosenzweig_mean-field_2022-1}
Matthew Rosenzweig.
\newblock Mean-{Field} {Convergence} of {Point} {Vortices} to the
  {Incompressible} {Euler} {Equation} with {Vorticity} in {$L^\infty$}.
\newblock {\em Archive for Rational Mechanics and Analysis}, 243(3):1361--1431,
  2022.

\bibitem[Ros23]{rosenzweig_rigorous_2023}
Matthew Rosenzweig.
\newblock On the rigorous derivation of the incompressible {Euler} equation
  from {Newton}'s second law.
\newblock {\em Letters in Mathematical Physics}, 113(1):Paper No. 13, 32, 2023.

\bibitem[RSa]{rosenzweig_commutator_nodate}
Matthew Rosenzweig and Sylvia Serfaty.
\newblock Commutator estimates, {Stein}'s method, and the transport approach to
  fluctuations of {Riesz} gases.
\newblock In preparation.

\bibitem[RSb]{rosenzweig_sharp_nodate}
Matthew Rosenzweig and Sylvia Serfaty.
\newblock Sharp commutator estimates of all order for {Coulomb} and {Riesz}
  modulated energies.
\newblock {\em Communications on Pure and Applied Mathematics}, n/a(n/a).
\newblock \_eprint: https://onlinelibrary.wiley.com/doi/pdf/10.1002/cpa.70010.

\bibitem[RS16]{rougerie_higher-dimensional_2016}
Nicolas Rougerie and Sylvia Serfaty.
\newblock Higher-dimensional {Coulomb} gases and renormalized energy
  functionals.
\newblock {\em Communications on Pure and Applied Mathematics}, 69(3):519--605,
  2016.

\bibitem[RS23]{rosenzweig_global--time_2023}
Matthew Rosenzweig and Sylvia Serfaty.
\newblock Global-in-time mean-field convergence for singular {Riesz}-type
  diffusive flows.
\newblock {\em The Annals of Applied Probability}, 33(2):754--798, 2023.

\bibitem[RS24]{rosenzweig_relative_2024}
Matthew Rosenzweig and Sylvia Serfaty.
\newblock Relative entropy and modulated free energy without confinement via
  self-similar transformation.
\newblock {\em arXiv preprint arXiv:2402.13977}, 2024.

\bibitem[RS25a]{rosenzweig_lake_2025}
Matthew Rosenzweig and Sylvia Serfaty.
\newblock The lake equation as a supercritical mean-field limit.
\newblock {\em Journal de l'École polytechnique. Mathématiques},
  12:1019--1068, 2025.

\bibitem[RS25b]{rosenzweig_modulated_2025}
Matthew Rosenzweig and Sylvia Serfaty.
\newblock Modulated logarithmic {Sobolev} inequalities and generation of chaos.
\newblock {\em Annales de la Faculté des Sciences de Toulouse. Mathématiques.
  Série 6}, 34(1):107--134, 2025.

\bibitem[RSW]{rosenzweig_wasserstein_nodate}
Matthew Rosenzweig, Dejan Slepčev, and Lihan Wang.
\newblock Wasserstein gradient flow of {Maximum} {Mean} {Discrepancy} with
  energy kernels.
\newblock In preparation.

\bibitem[Ser20]{serfaty_mean_2020}
Sylvia Serfaty.
\newblock Mean field limit for {Coulomb}-type flows.
\newblock {\em Duke Mathematical Journal}, 169(15):2887--2935, October 2020.
\newblock Publisher: Duke University Press.

\bibitem[Ser23]{serfaty_gaussian_2023}
Sylvia Serfaty.
\newblock Gaussian fluctuations and free energy expansion for {Coulomb} gases
  at any temperature.
\newblock {\em Annales de l'Institut Henri Poincaré Probabilités et
  Statistiques}, 59(2):1074--1142, 2023.

\bibitem[Ser24]{serfaty_lectures_2024}
Sylvia Serfaty.
\newblock Lectures on {Coulomb} and {Riesz} gases.
\newblock {\em arXiv preprint arXiv:2407.21194}, 2024.

\bibitem[SS15a]{sandier_1d_2015}
Etienne Sandier and Sylvia Serfaty.
\newblock {1D} log gases and the renormalized energy: crystallization at
  vanishing temperature.
\newblock {\em Probability Theory and Related Fields}, 162(3-4):795--846, 2015.

\bibitem[SS15b]{sandier_2d_2015}
Etienne Sandier and Sylvia Serfaty.
\newblock {2D} {Coulomb} gases and the renormalized energy.
\newblock {\em The Annals of Probability}, 43(4):2026--2083, 2015.

\bibitem[SSS19]{seeger_multilinear_2019}
Andreas Seeger, Charles~K. Smart, and Brian Street.
\newblock Multilinear singular integral forms of {Christ}-{Journé} type.
\newblock {\em Memoirs of the American Mathematical Society}, 257(1231):v+134,
  2019.
\newblock ISBN: 978-1-4704-3437-3; 978-1-4704-4945-2.

\bibitem[Ste70]{stein_singular_1970}
Elias~M Stein.
\newblock {\em Singular {Integrals} and {Differentiability} {Properties} of
  {Functions}}, volume~2.
\newblock Princeton University Press, 1970.

\bibitem[SV14]{serfaty_mean_2014}
Sylvia Serfaty and Juan~Luis Vázquez.
\newblock A mean field equation as limit of nonlinear diffusions with
  fractional {Laplacian} operators.
\newblock {\em Calculus of Variations and Partial Differential Equations},
  49(3-4):1091--1120, March 2014.

\bibitem[Tru67]{trudinger_imbeddings_1967}
Neil~S. Trudinger.
\newblock On imbeddings into {Orlicz} spaces and some applications.
\newblock {\em J. Math. Mech.}, 17:473--483, 1967.

\bibitem[Yan13]{yang_higher_2013}
Ray Yang.
\newblock On higher order extensions for the fractional {Laplacian}, February
  2013.
\newblock arXiv:1302.4413 [math].

\bibitem[Yud63]{yudovich_non-stationary_1963}
V.~I. Yudovich.
\newblock Non-stationary flow of an ideal incompressible liquid.
\newblock {\em USSR Computational Mathematics and Mathematical Physics},
  3(6):1407 -- 1456, 1963.

\end{thebibliography}
\bibliographystyle{alpha}

\end{document}